\numberwithin{equation}{section}
\newtheorem{theorem}{Theorem}[section]
\newtheorem{corollary}[theorem]{Corollary}
\theoremstyle{definition}
\newtheorem{example}[theorem]{Example}
\newtheorem{definition}[theorem]{Definition}
\newtheorem{remark}[theorem]{Remark}
\newtheorem*{warning}{Warning}
\theoremstyle{remark}
\newenvironment{romenumerate}[1][0pt]{
\addtolength{\leftmargini}{#1}\begin{enumerate}
 }{\end{enumerate}}
\newcounter{oldenumi}
{\setcounter{oldenumi}{\value{enumi}}
\begin{romenumerate} \setcounter{enumi}{\value{oldenumi}}}
{\end{romenumerate}}
\newcounter{thmenumerate}
\newcounter{romxenumerate}   
\newcounter{xenumerate}   
\newcommand\pfitemx[1]{\par#1:}
\newcommand\pfitemref[1]{\pfitemx{\ref{#1}}}
\newcommand{\refT}[1]{Theorem~\ref{#1}}
\newcommand{\refC}[1]{Corollary~\ref{#1}}
\newcommand{\refR}[1]{Remark~\ref{#1}}
\newcommand{\refS}[1]{Section~\ref{#1}}
\newcommand{\refSS}[1]{Subsection~\ref{#1}}
\newcommand{\refE}[1]{Example~\ref{#1}}
\newcommand{\refand}[2]{\ref{#1} and~\ref{#2}}
\newcommand\marginal[1]{\ifdraft
{\marginpar[\raggedleft\tiny #1]{\raggedright\tiny #1}}
{\message{ERROR marginal requires draft option}}}
\xdef\klockan{\the\count1.0\the\count255}
\xdef\klockan{\the\count1.\the\count255}\fi
\DeclareMathOperator*{\sumx}{\sum\nolimits^{*}}
\newcommand{\sumj}{\sum_{j=0}^\infty}
\newcommand{\sumk}{\sum_{k=0}^\infty}
\newcommand{\sumin}{\sum_{i=0}^{\nn}}
\newcommand{\sumiin}{\sum_{i=1}^{\nn}}
\newcommand{\sumini}{\sum_{i=0}^{\nn_1}}
\newcommand{\suminii}{\sum_{i=0}^{\nn_2}}
\newcommand{\prodin}{\prod_{i=1}^{\nn}}
\newcommand{\prodkn}{\prod_{k=1}^{\nn}}
\newcommand{\prodjm}{\prod_{j=1}^{m}}
\newcommand\set[1]{\ensuremath{\{#1\}}}
\newcommand\xpar[1]{(#1)}
\newcommand\bigpar[1]{\bigl(#1\bigr)}
\newcommand\Bigpar[1]{\Bigl(#1\Bigr)}
\newcommand\biggpar[1]{\biggl(#1\biggr)}
\newcommand\lrpar[1]{\left(#1\right)}
\newcommand\bigcpar[1]{\bigl\{#1\bigr\}}
\newcommand\Bigcpar[1]{\Bigl\{#1\Bigr\}}
\newcommand\lrcpar[1]{\left\{#1\right\}}
\newcommand\bigabs[1]{\bigl|#1\bigr|}
\newcommand\lrabs[1]{\left|#1\right|}
\def\rompar(#1){\textup(#1\textup)}    
\newcommand\xfrac[2]{#1/#2}
\newcommand\parfrac[2]{\lrpar{\frac{#1}{#2}}}
\newcommand\bigparfrac[2]{\bigpar{\frac{#1}{#2}}}
\newcommand\Bigparfrac[2]{\Bigpar{\frac{#1}{#2}}}
\def\xexp(#1){e^{#1}}
\newcommand\punkt[1]{\if.#1\else.\spacefactor1000\fi{#1}}
\newcommand\ie{i.e\punkt}
\newcommand\eg{e.g\punkt}
\newcommand\viz{viz\punkt}
\newcommand\cf{cf\punkt}
\newcommand\etc{etc\punkt}
\newcommand\ii{\mathrm{i}}
\newcommand\bbR{\mathbb R}
\newcommand\bbC{\mathbb C}
\newcommand\bbQ{\mathbb Q}
\newcommand\bbF{\mathbb F}
\newcounter{CC}
\newcounter{cc}
\newcommand\E{\operatorname{\mathbb E{}}}
\newcommand\sign{\operatorname{sign}}
\newcommand\ga{\alpha}
\newcommand\gb{\beta}
\newcommand\gd{\delta}
\newcommand\gD{\Delta}
\newcommand\gf{\varphi}
\newcommand\gam{\gamma}
\newcommand\gl{\lambda}
\newcommand\gL{\Lambda}
\newcommand\go{\omega}
\newcommand\gO{\Omega}
\newcommand\eps{\varepsilon}
\newcommand\cB{\mathcal B}
\newcommand\cD{\mathcal D}
\newcommand\cH{\mathcal H}
\newcommand\cP{\mathcal P}
\newcommand\smatrixx[1]{\left(\begin{smallmatrix}#1\end{smallmatrix}\right)}
\newcommand\matrixx[1]{\begin{pmatrix}#1\end{pmatrix}}
\newcommand\qw{^{-1}}
\newcommand\qww{^{-2}}
\newcommand\qqc{^{3/2}}
\renewcommand{\=}{:=}
\newcommand\xx{\mathbf{x}}
\newcommand\yy{\mathbf{y}}
\newcommand\ax{\mathbf{a}}
\newcommand\ha{\check a}
\newcommand\hb{\check b}
\newcommand\tf{\widetilde f}
\newcommand\tPsi{\widetilde \Psi}
\newcommand\tPhi{\widetilde \Phi}
\newcommand\refl{^\dag}
\newcommand\xij{\xi^{(j)}}
\newcommand\xin{\xi_1,\dots,\xi_{\nn}}
\newcommand\xijn{\xij_1,\dots,\xij_{\nn}}
\newcommand\xiln{\xi^{(1)}_1,\dots,\xi^{(\ell)}_\nn}
\newcommand\xini{\xi_1,\dots,\xi_{\nn-1}}
\newcommand\Phix{\Phi^*}
\newcommand\red{\widehat}
\newcommand\ar{\red a}
\newcommand\fr{\red f}
\newcommand\Han{\operatorname{Han}}
\newcommand\scite[1]{_{\text{\cite{#1}}}}
\newcommand\scitex[2]{_{#1\,\text{\cite{#2}}}}
\newcommand\tH{\widetilde H}
\newcommand\hx{\widetilde x}
\newcommand\hy{\widetilde y}
\newcommand\JH{G_6}
\newcommand\tJH{\widetilde{\JH}}
\newcommand\gOx{\Omega^*}
\newcommand\Res{\operatorname{Res}}
\newcommand\tRes{\red{\Res}}
\newcommand\Resx{\widetilde{\Res}}
\newcommand\Resy{{\Res^*}}
\newcommand\tp{\tilde p}
\newcommand\tq{\tilde q}
\newcommand\tr{\tilde r}
\newcommand\dg[1]{\langle#1\rangle}
\newcommand\dgx[1]{_{\dg{#1}}}
\newcommand\dgxx[2]{_{#1\,\dg{#2}}}
\newcommand\restr[1]{|\dgx{#1}}
\newcommand\nn{n}
\newcommand\pd{\partial}
\newcommand\pdx{\pd_x}
\newcommand\pdy{\pd_y}
\newcommand\Fx{F^*}
\newcommand\crx[1]{[#1_1,#1_2;#1_3,#1_4]}
\newcommand\crxx[4]{[x_#1,x_#2;x_#3,x_#4]}
\newcommand\mm{\mu}
\newcommand\tv[1]{\{#1\}}
\newcommand\bigtv[1]{\bigcpar{#1}}
\newcommand\mud{d}
\newcommand\bcP{\overline{\cP}}
\newcommand\GQ[2]{\genfrac{[}{]}{0pt}{}{#1}{#2}_q}
\newcommand\Satze{S\"atze }
\newcommand\bxi{\overline\xi}
\newcommand\xibxi{\ensuremath{\xi_1-\bxi,\dots,\xi_n-\bxi}}
\newcommand\orr{\emph{or} }
\newcommand{\maple}{\texttt{Maple}}
\newcommand\REM[1]{{\raggedright\texttt{[#1]}\par\marginal{XXX}}}
\newcommand\urladdrx[1]{{\urladdr{\def~{{\tiny$\sim$}}#1}}}
\begin{document}
\title
{Invariants of polynomials and binary forms}

\date{14 February, 2011} 

\author{Svante Janson}
\address{Department of Mathematics, Uppsala University, PO Box 480,
SE-751~06 Uppsala, Sweden}
\email{svante.janson@math.uu.se}
\urladdrx{http://www.math.uu.se/~svante/}

\subjclass[2000]{} 

\begin{abstract} 
We survey various classical
results on invariants of polynomials, or equivalently, of binary forms,
focussing on
explicit calculations for invariants of polynomials
of degrees 2, 3, 4.
\end{abstract}

\maketitle

\section{Introduction}\label{S:intro}

The purpose of this survey is to collect various 
classical (mainly 19th century) results on invariants of polynomials,
focussing on
explicit formulas for invariants of polynomials
of degrees 2, 3, 4.
Invariants of polynomials are equivalent to
invariants of binary forms, so we begin (\refS{Sbinary})
with a summary of definitions and some key result for these,
mainly based on \citet{Schur}; some other books on invariants
(which we only partly have consulted) are
\citet{Dickson}, \citet{Elliott}, \citet{Glenn}, \citet{Hilbert},
\citet{Olver}. See these books for further results and proofs.
Some formulas below have been calculated using \maple.

The theory is really simpler and more symmetric for binary forms, 
and the obvious correspondence between binary forms and polynomials (see
\refS{Spol}) makes it in principle trivial to transfer the definitions and
results to polynomials. 
Nevertheless, since polynomials are so common in other parts of mathematics,
we find it interesting to perform this translation explicitly and to give
detailed formulas for polynomials.

\begin{remark}
  The formulas are purely algebraic and are valid for any ground field of
  characteristic 0, for example $\bbQ$, $\bbR$ or $\bbC$.   

The formulas
  give invariants also for fields of finite characteristic, at least as long as
it does not divide any denominator (for degree $\le4$, only characteristic 2
or 3 may have such problems),
but there
  are also other invariants in finite characteristic. One example is the
  invariant \cite{Dickson09}
  \begin{equation}\label{F3}
a_0^{2}{a_2}
+{a_0}\,a_2^{2}
+{a_0}\,a_1^{2}
+a_1^{2}{a_2}
-a_0^{3}-a_2^{3}
  \end{equation}
of a quadratic polynomial $a_0x^2+a_1x+a_2$ in $\bbF_3$. 
(Cf.\  \refS{S2}, and note that \eqref{F3} does not vanish for $f(x)=x^2$,
unlike the discriminant $\gD$.)
See further
\cite{Dickson} and, for example, \cite{Williams}. 
\end{remark}

We ignore trivial complications with the invariant that is identically 0;
for example, we may say that there is no invariant of some type, really
meaning that there is no such invariant that is not identically zero.
Similarly, we for simplicity may say that an invariant $\Phi$ is the only
invariant of some type, really meaning this up to constant factors, \ie{}
that every such invariant is a multiple $c\Phi$ of $\Phi$ (in other words,
the space of such invariants is 1-dimensional).
Note also that constant factors in the definition of specific invariants 
usually are uninteresting, and different choices of such factors often are
made in different references.

When giving examples of different notations in other papers and books, we
use subscripts; for 
example, $A\scite{Schur}$ means $A$ in \cite{Schur}.

We denote falling factorials by
\begin{equation}
(n)_k\=n(n-1)\dotsm(n-k+1)=\frac{n!}{(n-k)!}
=\binom nk k!\,.  
\end{equation}

\section{Invariants of binary forms}\label{Sbinary}

We begin by collecting some definitions and general results. See \eg{}
\citet{Elliott}, \citet{Hilbert}, \citet{KR} and 
\citet{Schur} for further details. (In particular, see \cite{KR} for the
\emph{umbral calculus}, which is a useful method to describe and study
invariants and covariants, but which will not be used here.)

\begin{warning}
Note that the notation in these and many other references
is different, since the
forms there are written as $\sumin \binom {\nn}i a_ix^{\nn-i}y^i$ instead of
\eqref{f} below; \ie, 
$a\scitex{i}{Elliott}=a\scitex{i}{Hilbert}=a\scitex{i}{Schur}=\ha_i$, where
\begin{equation}\label{ha}
\ha_i\=
\frac{a_i}{\binom{\nn}i}.  
\end{equation}
The variables $\ha_i$ are often more convenient for theoretical purposes,
see \eg{} \refE{EHankel} below and \cite[Satz 2.18]{Schur} or \cite{KR}, and
they are generally used in standard treatments,
but for our purposes
we prefer our $a_i$, and will only rarely use $\ha_i$.
\end{warning}

\begin{remark}
  The definitions in this section extend to forms in any number $n\ge2$
  variables, but we will only consider the binary case. See 
\cite{Elliott}, \cite{Hilbert} and
\cite[I]{Schur}.
\end{remark}

A homogeneous binary form of degree (order) $\nn$ can be written as
\begin{equation}\label{f}
  f(\xx)=f(x,y)=\sumin a_i x^{\nn-i}y^i.
\end{equation}
We write $\xx\=(x,y)$ 
and $\ax\=(a_0,\dots,a_{\nn})$. (We regard these
as row vectors.) 
We sometimes use instead the notation
$(x_1,x_2)=(x,y)$. 
We further write $\pdx=\pd_1=\pd/\pd x=\pd/\pd x_1$ and 
$\pdy=\pd_2=\pd/\pd y=\pd/\pd x_2$, and note that, for $0\le i\le \nn$,
\begin{equation}\label{adiff}
  a_i=a_i(f)=\frac1{(\nn-i)!}\pdx^{\nn-i}f(0,1)
=\frac1{(\nn-i)!\,i!}\pdx^{\nn-i}\pdy^i f
,
\end{equation}
and thus 
\begin{equation}
  \label{hadiff}
\ha_i=\frac1{n!}\pdx^{\nn-i}\pdy^i f.
\end{equation}
We use occasionally subscripts $\dg \nn$ 
to denote the degree of the
considered forms or polynomials; for example $a\dgxx{i}{n}$.

A $2\times 2$ matrix $T=\smatrixx{\ga&\gb\\\gam&\gd}$ acts on the variables
(to the right) by $\xx'=\xx T$ and on forms (to the left) by
\begin{equation}\label{Tf}
  Tf(\xx)\=f(\xx T) = f(\ga x+\gam y,\gb x+\gd y).
\end{equation}
This gives an action of the general linear group $GL(2)$ on the set of all
binary forms of degree $\nn$.

\begin{definition}\label{Dinv}
 A (projective) \emph{invariant} (of binary forms of a given degree $\nn$) is a
 homogeneous polynomial $\Phi(f)$ in the coefficients $\ax$ such that 
  \begin{equation}\label{inv}
	\Phi(Tf)=|T|^w\Phi(f)
  \end{equation}
for some number $w$ and all $f$ and $T\in GL(2)$. The number $w$ is the
\emph{weight} (or \emph{index}) 
of $\Phi$. We denote the \emph{degree} of $\Phi$ by $\nu$.
(We generally use $\nu$ for the degree and $w$ for the weight, sometimes
without comment; 
similarly we later use $\mm$ for the order of covariants and seminvariants.
There are no standard notations; some examples of other notations are
$i\scite{Elliott}=i\scite{Glenn}=g\scite{Hilbert}=r\scite{Schur}=\nu$ 
for the degree
and $w\scite{Elliott}=k\scite{Glenn}=p\scite{Hilbert}=p\scite{Schur}=w$ 
for the weight.
Further $p\scite{Elliott}=m\scite{Glenn}=n\scite{Hilbert}=k\scite{Schur}=\nn$ 
for the degree of the form
and
$\go\scite{Glenn}=m\scite{Schur}=\mm$ 
for the order, see below.)
  \end{definition}

The weight $w$ is necessarily an integer.
Taking $T=\gl I$, which gives $Tf=\gl^\nn f$, we see that 
\begin{equation}
  \label{weight}
\nn\nu=2w.
\end{equation}
Hence $w\ge0$, and $w>0$ except in the trivial case $\nu=0$ when the
invariant is a constant.

\begin{remark}
If $\Phi$ satisfies the more general equation $\Phi(Tf)=c_T\Phi(f)$ for some
collection of numbers $c_T$, then necessarily $c_T=|T|^w$ for some $w$, so
$\Phi$ is an invariant as defined above. Similarly, in definitions 
below, we may equivalently allow arbitrary factors $c_T$
in \eqref{jinv}, \eqref{cinv}, \eqref{jcinv}, \eqref{sinv}, \eqref{jsinv};
these necessarily 
have to have the given form $|T|^w$ or $\ga^{\mm}|T|^w$ for some $w$ and $\mm$.
\end{remark}

\begin{remark}
  The identity \eqref{inv} 
is a polynomial identity in the entries of $T$, and thus it
extends to all $2\times 2$ matrices $T$, also
  singular.  Thus  $\Phi(Tf)=0$ whenever $T$ is singular, except in the
  trivial case $\nu=w=0$ when $\Phi$ is a constant.
The same applies to similar formulas below.
\end{remark}

\begin{definition}
Similarly, a \emph{joint invariant} of several forms
$f_1,\dots,f_\ell$, 
of degrees $\nn_1,\dots,\nn_\ell$, is a polynomial in the coefficients of 
$f_1,\dots,f_\ell$, homogeneous of degrees $\nu_1,\dots,\nu_\ell$, respectively,
such that
 \begin{equation}\label{jinv}
  \Phi(Tf_1,\dots,Tf_\ell)=|T|^w\Phi(f_1,\dots,f_\ell)
\end{equation}   
for some $w$, the \emph{weight} of $\Phi$,
and all $f_1,\dots,f_\ell$ and $T\in GL(2)$. 
\end{definition}

In this case we have
\begin{equation}  \label{jweight}
  \nn_1\nu_1+\dots+ \nn_\ell \nu_\ell=2w.
\end{equation}
Again $w$ is an integer with $w\ge0$, and $w>0$ except in the trivial case
of a constant invariant.

\begin{remark}
  The assumption that $\Phi$ is homogeneous separately in the coefficients
  of each $f_j$ is no real restriction, since any invariant polynomial $Q$
  can be decomposed into homogeneous components which are invariant.
\end{remark}

\begin{example}
  \label{Eapolar}
The  \emph{apolar invariant} 
of two binary forms $f(x,y)=\sumin a_i x^{\nn-i}y^i$ and
$g(x,y)=\sumin b_i x^{\nn-i}y^i$ of the same degree $\nn$ is 
\begin{equation}\label{apolar}
  \begin{split}
  A(f,g)&\=\sumin(-1)^ii!\,(\nn-i)!\,a_ib_{\nn-i}
={\nn!}\sumin(-1)^i\frac{a_ib_{\nn-i}}{\binom {\nn}i}
\\&\phantom:
={\nn!}\sumin(-1)^i{\binom {\nn}i}{\ha_i\hb_{\nn-i}}
\\&\phantom:
=f(\pdy,-\pdx)g(x,y)
=g(-\pdy,\pdx)f(x,y)
.	
  \end{split}
\end{equation}
This is a joint invariant of $f$ and $g$ of degrees $\nu_1=\nu_2=1$ and weight
$\nn$.
(Our definition differs from \cite{Schur} by a factor $\nn!$:
$A\scite{Schur}(f,g)=A(f,g)/\nn!$.)
The apolar invariant is also called \emph{transvectant}, see \refE{Etrans}
below. 
Using \eqref{adiff}, we also have
\begin{equation}
  \label{apolardiff}
A(f,g)=\sumin(-1)^i(\pdx^{\nn-i}f\cdot\pdx^i g)(0,1).
\end{equation}
Note that $A(g,f)=(-1)^\nn A(f,g)$; hence the apolar invariant is symmetric in $f$ and
$g$ if $\nn$ is even, and antisymmetric if $\nn$ is odd.

The apolar invariant is 
the only joint invariant 
with degrees $\nu_1=\nu_2=1$
of two binary forms of the same degree, and there are no such invariants of
binary forms of different degrees \cite[Satz 2.6]{Schur}.

See \refE{Eapolarcov} for a generalization.
\end{example}

\begin{example}\label{Eapolar1}
Taking $f=g$ in \refE{Eapolar} we obtain
  the \emph{apolar invariant} 
(or \emph{transvectant}, see \refE{Etrans})
of a single binary form
  \begin{equation}
A(f,f)\=\sumin(-1)^ii!\,(\nn-i)!\,a_ia_{\nn-i}; 	
  \end{equation}
this is an invariant of
  degree $\nu=2$ and weight $w=\nn$, for any even $\nn$.
(Note that $A(f,f)=0$
  when $\nn$ is odd.)

In fact this is 
the only invariant of degree 2;
if $\nn$ is odd there is thus no such invariant \cite[Satz 2.5]{Schur}.
\end{example}

\begin{example}
  \label{EHankel}
If $\nn=2q$ is even, then the \emph{Hankel determinant}
\begin{equation}\label{hankel}
  \Han(f)=\bigabs{\ha_{i+j}}_{i,j=0}^q,
\qquad \text{with}\quad \ha_l=\xfrac{a_l}{\textstyle\binom {\nn}l},
\end{equation}
is an 
invariant of degree $\nu=q+1=\nn/2+1$ and, by \eqref{weight},
weight $w=q(q+1)$.
The Hankel determinant is also called the \emph{catalecticant}. 
\end{example}

\subsection{Covariants}

\begin{definition}
More generally, a (projective) \emph{covariant} 
is a polynomial $\Psi(f;\xx)=\Psi(\ax;\xx)$ in 
$\xx$ and the coefficients $\ax$ of $f$
such that
\begin{romenumerate}
\item 
$\Psi$ is homogeneous in $\ax$ of some degree $\nu$, the \emph{degree} of $\Psi$;
\item 
$\Psi$ is homogeneous in $\xx$ of some degree $\mm$, the \emph{order} of $\Psi$;
\item 
  \begin{equation}\label{cinv}
\Psi(Tf;\xx T\qw)=|T|^w \Psi(f;\xx)	
  \end{equation}
for some integer $w$, the \emph{weight} of $\Psi$,
and all forms $f$ (of degree $\nn$) and all $T\in GL(2)$.
\end{romenumerate}  
\end{definition}

Hence, an invariant is a covariant of order 0. 

The relation \eqref{weight}
generalizes to 
\begin{equation}  \label{cweight}
  \nn\nu=m+2w.
\end{equation}

\begin{definition}
Similarly, 
a  \emph{joint covariant} of forms $f_1,\dots,f_\ell$ of degrees
$\nn_1,\dots,\nn_\ell$ 
is a polynomial $\Psi(f_1,\dots,f_\ell;\xx)=\Psi(\ax_1,\dots,\ax_\ell;\xx)$
in the coefficients $\ax_j$ of $f_j$, $j=1,\dots,\ell$,
that is homogeneous in each $\ax_j$ of degree $\nu_j$,
homogeneous in $\xx$ of degree $\mm$, the \emph{order} of $\Psi$,
and such that
  \begin{equation}\label{jcinv}
\Psi(Tf_1,\dots,Tf_\ell;\xx T\qw)=|T|^w \Psi(f_1,\dots,f_\ell;\xx)	
  \end{equation}
for some integer $w$, the \emph{weight} of $\Psi$, and all forms
$f_1,\dots,f_\ell$ and all $T\in GL(2)$.  
\end{definition}

We now have
\begin{equation}\label{jcweight}
  \nn_1\nu_1+\dots+\nn_\ell \nu_\ell=m+2w.
\end{equation}

\begin{example}\label{Ef}
  The form $f(\xx)$ itself is a covariant of degree $\nu=1$, order $\mm=\nn$ and
  weight $w=0$.
\end{example}

\begin{example}\label{EHessian}
  The \emph{Hessian covariant}
  \begin{equation}
H(f)=
H(f;\xx)\=\lrabs{\frac{\partial^2 f}{\partial x_i\partial x_j}}_{1\le i,j\le 2}
  \end{equation}
is a covariant of degree $\nu=2$, order $\mm=2(\nn-2)$ and weight $w=2$.
(Other notation:
$\cH\scite{Hilbert}=(n(n-1))\qww H$.)
\end{example}

\begin{example}\label{EJacobian}
  The \emph{Jacobian determinant}
  \begin{equation}
	J(f_1,f_2)
=	J(f_1,f_2;\xx)
\=\lrabs{\frac{\partial f_i}{\partial x_j}}_{1\le i,j\le 2}
  \end{equation}
is a joint covariant of degrees $\nu_1=\nu_2=1$, order $\nn_1+\nn_2-2$ and
weight $w=1$.
Note that $J$ is antisymmetric; $J(g,f)=-J(f,g)$, and $J(f,f)=0$.
\end{example}

\begin{example}
  \label{Etrans}
The \emph{$k$:th transvectant} 
$\tv{f,g}_k$ is a joint covariant of two forms $f$ and $g$ of
arbitrary degrees $\nn_1$ and $\nn_2$, defined by
\begin{equation}\label{trans}
  \begin{split}
  \tv{f,g}_k&=
\Bigpar{
\frac{\pd}{\pd x_1}\frac{\pd}{\pd y_2}-\frac{\pd}{\pd x_2}\frac{\pd}{\pd y_1}
}^kf(\xx)g(\yy)\Big|_{\yy=\xx}
\\&
=\sum_{i=0}^k(-1)^i\binom ki 
\frac{\pd^k f}{\pd x_1^{k-i}\pd x_2^i}
\frac{\pd^k g}{\pd x_1^{i}\pd x_2^{k-i}}
.	
  \end{split}
\end{equation}
Here $k\ge0$ is an arbitrary positive integer, but it is easy to see that
$\tv{f,g}_k=0$ unless $k\le\min(\nn_1,\nn_2)$. (Trivially
$\tv{f,g}_0=fg$.) 
It is easy to see that $\tv{f,g}_k$ is a joint covariant of degrees
$\nu_1=\nu_2=1$, order $\nn_1+\nn_2-2k$ and weight $w=k$.
(Other notations:  
$\tv{f,g}_k
=(n_1)_k(n_2)_k (f,g)\scitex{k}{highertrans}
=(f,g)^k\scite{Glenn}
=(n_1)_k(n_2)_k (f,g)\scitex{k}{Hilbert}
=(f,g)^{(k)}\scite{Olver}=
[f,g]^k\scite{Salden}$.)

Furthermore, $\tv{f,g}_k=(-1)^k\tv{g,f}_k$, so $\tv{f,g}_k$ is symmetric if
$k$ is even 
and anti-symmetric if $k$ is odd. In particular, $\tv{f,f}_k=0$ for odd $k$,
but for even $k\le\nn$, $\tv{f,f}_k$ is a non-trivial
covariant of degree $2$, order $2\nn-2k$ and weight $k$.
(Other notations:  
$f\scitex{k}{Hilbert}\=\frac12(f,f)\scitex{k}{Hilbert}
=\frac12(n)_k^{-2}\tv{f,f}_k$.)

The first transvectant is the Jacobian covariant in \refE{EJacobian}:
\begin{equation}
  \tv{f,g}_1=J(f,g).
\end{equation}

The second transvectant $\tv{f,f}_2$ is (twice) the Hessian covariant
in \refE{EHessian}: 
\begin{equation}
  \tv{f,f}_2=2H(f).
\end{equation}

Furthermore, in the case $n_1=n_2=n=k$, $\tv{f,g}_n$ is of order 0, \ie, an
invariant. In this case, by a binomial expansion in \eqref{trans}, 
\eqref{hadiff} and \eqref{apolar},
\begin{equation}\label{trans=a}
  \begin{split}
\tv{f,g}_n
&=	
\sumin \binom ni(-1)^i
\Bigpar{
\frac{\pd}{\pd x_1}\frac{\pd}{\pd y_2}}^{n-i}
\Bigpar{\frac{\pd}{\pd x_2}\frac{\pd}{\pd y_1}}^{i}
 f(\xx)g(\yy)
\\&=	
\sumin \binom ni(-1)^i
n!\,\ha_i\,n!\,\hb_{n-i}
\\&=
n!\, A(f,g).
  \end{split}
\end{equation}
Hence the apolar invariant equals (apart from a factor $1/n!$) the $n$th
transvectant 
$\tv{f,g}_n$. 

For relations between transvectants, and interpretations in terms of
representations of $SL_2$, see \citet{highertrans}.
\end{example}

\begin{example}
  \label{Eapolarcov}
As shown in \eqref{trans=a}, the transvectant $\tv{f,g}_n$ of two binary
forms of equal degree $n$ is (apart from a constant factor) their apolar
invariant.
More generally, if $f$ and $g$ are binary forms of degrees $n$ and $m$ with
$n\ge m\ge0$,  the \emph{apolar covariant} 
$\tv{f,g}$ is defined as the highest non-trivial transvectant (\ie, the
$m$th transvectant), which by \eqref{trans} and a short calculation
can be expressed as
\begin{equation}
\tv{f,g}
  \=
\tv{f,g}_m
=m!\,g(-\pd_2,\pd_1)f(x_1,x_2).
\end{equation}
(Hence, if $m=n$, $\tv{f,g}=n!\,A(f,g)$, so the apolar covariant then
reduces to the apolar invariant in \refE{Eapolar}, except for the trivial but
inconvenient factor $n!$.)

By \refE{Etrans}, $\tv{f,g}$ is a joint covariant of degrees
$\nu_1=\nu_2=1$, order $\mm=n-m$ and weight $w=m$.

Note the asymmetry in the definition; we assume $n\ge m$.
\end{example}

\begin{example}
  \label{EGund}
The \emph{$k$th Gundelfinger covariant} $G_k(f)$, for $k=0,1,\dots$, is the
$(k+1)\times(k+1)$ determinant
\begin{equation}\label{gund}
  G_k(f)
\=
\lrabs{\frac{\pd^{2k}f(x,y)}{\pd x^{2k-i-j}\pd y^{i+j}}}_{0\le i,j\le k};
\end{equation}
this is a covariant of degree $\nu=k+1$, order $\mm=(k+1)(n-2k)$ and weight
$w=k(k+1)$,
see \cite{Gund} and \cite{Kung}.
Note that $G_0(f)=f$ and $G_1(f)=H(f)$, the Hessian covariant; further
$G_k(f)=0$ if $k>n/2$. If $n$ is even and $k=n/2$, then, by \eqref{hadiff}
and \eqref{hankel}, 
\begin{equation}\label{gund=hankel}
  G_{n/2}(f)
=\bigabs{n!\,\ha_{i+j}}_{i,j=0}^{n/2}=n!^{n/2+1}\Han(f),
\end{equation}
a constant times the Hankel determinant (catalecticant) in \refE{EHankel}.
\end{example}

\smallskip

A  covariant $\Psi$ of order $\mm$ can be written
$\Psi(f;\xx)=\Phi(f)x_1^{\mm}+\dots$; we call the coefficient $\Phi(f)$ the
\emph{source} or \emph{leading coefficient} of $\Psi$. (And similarly for
joint covariants.) 
The source of $\Psi$ is thus given by 
\begin{equation}\label{co2semi}
  \Phi(f)\=\Psi(f;1,0);
\end{equation}
equivalently,
\begin{equation}\label{co2semidiff}
  \Phi(f)\=\tfrac1{\mm!}\pd_1^{\mm}\Psi(f;\xx).
\end{equation}
Conversely, by \eqref{cinv}, $\Psi$ can be recovered by
\begin{equation}\label{semi2co}
  \Psi(f;x,y)=
x^{-2w}\Phi\bigpar{T_{x,y}^{(1)}f}
=x^{-w}\Phi\bigpar{T_{x,y}^{(2)}f}
=x^{\mm}\Phi\bigpar{T_{x,y}^{(3)}f},
\end{equation}
where
\begin{align}
T_{x,y}^{(1)}&\= \matrixx{x&y\\0&x},&
T_{x,y}^{(2)}&\= \matrixx{x&y\\0&1},&
T_{x,y}^{(3)}&\= \matrixx{1&y/x\\0&1}.
\end{align}

\subsection{Seminvariants}

\begin{definition}
  A  \emph{seminvariant} (of binary forms of degree $\nn$) is a
  homogeneous polynomial $\Phi(f)$ in the coefficients $\ax$ such that 
  \begin{equation}\label{sinv}
	\Phi(Tf)=\ga^{\mm}|T|^w\Phi(f)
  \end{equation}
for some $\mm,w\ge 0$ and all $f$ and $T$ of the form
$\smatrixx{\ga&0\\\gam&\gd}$. The number $\mm$ is the \emph{order} and $w$ is the 
\emph{weight} of $\Phi$. We denote the \emph{degree} of $\Phi$ by $\nu$.
  \end{definition}

\begin{definition}
Similarly, a \emph{joint seminvariant} of several forms
$f_1,\dots,f_\ell$, 
of degrees $\nn_1,\dots,\nn_\ell$, is a polynomial $\Phi (f_1,\dots,f_\ell)$
in the coefficients of 
$f_1,\dots,f_\ell$, homogeneous of degrees $\nu_1,\dots,\nu_\ell$, respectively,
such that
 \begin{equation}\label{jsinv}
  \Phi(Tf_1,\dots,Tf_\ell)=\ga^{\mm}|T|^w\Phi(f_1,\dots,f_\ell).
\end{equation}   
for some $\mm,w\ge 0$, the \emph{order} and \emph{weight} of $\Phi$,
and all $f_1,\dots,f_\ell$ and 
$T=\smatrixx{\ga&0\\\gam&\gd}$. 
\end{definition}

In other words, a (joint) seminvariant is an invariant for the subgroup of
$GL(2)$ given by
$\lrcpar{\smatrixx{\ga&\gb\\\gam&\gd}\in GL(2):\gb= 0}=
\Bigcpar{\smatrixx{\ga&0\\\gam&\gd}:\ga\gd\neq 0}$.

We still have \eqref{cweight} and \eqref{jcweight}, respectively. In fact,
these are equivalent to invariance for all 
$T=\gl I=\smatrixx{\gl&0\\ 0&\gl}$. 
Consequently, if \eqref{cweight} or \eqref{jcweight} holds, it is
enough that \eqref{sinv} or \eqref{jsinv}
holds for $T$ of the form $\smatrixx{\ga&0\\\gam&1}$; these are
the transformations $(x,y)\mapsto(\ga x+\gam y,y)$, which form a group $A(1)$ 
obviously isomorphic to the group of affine maps $x\mapsto \ga x+\gam$ in one
dimension. 

Furthermore,we say that a coefficient $a_i$ has \emph{weight} $i$, and more
generally 
that a monomial $a_0^{k_0}a_1^{k_1}a_2^{k_2}\dotsm$ has weight
$k_1+2k_2+\cdots$. A polynomial in $\ax=(a_0,\dots,a_{\nn})$ is \emph{isobaric}
if all its terms has the same weight, and then this is said to be the weight
of the polynomial. It is easily seen that the invariance \eqref{sinv} or
\eqref{jsinv} holds for all $T$ of the form 
$\smatrixx{1&0\\0&\gd}$ if and only if $\Phi$ is isobaric of weight $w$.
(In this case, \eqref{sinv}=\eqref{inv} and \eqref{jsinv}=\eqref{jinv}.) 
Consequently, the invariance \eqref{sinv} or \eqref{jsinv} holds for all
diagonal matrices $T$ if and only if $\Phi$ is homogeneous and isobaric 
and \eqref{cweight} or \eqref{jcweight} holds.
This leads to the following characterization, see
\cite[\S II.2]{Schur}.

\begin{theorem}\label{Tsemi}
The following are equivalent for a polynomial $\Phi$ in the coefficients of
one or several binary forms.
\begin{romenumerate}
\item 
$\Phi$ is a (joint) seminvariant   
\item 
$\Phi$ is homogeneous and invariant for $A(1)$.
\item 
$\Phi$ is homogeneous and isobaric and invariant for all $T$ of the form
  $\smatrixx{1&0\\t&1}$, \ie, translations $(x,y)\mapsto(x+t,y)$.
(For such $T$, the invariance is simply $\Phi(Tf)=\Phi(f)$ or
  $\Phi(Tf_1,\dots,Tf_\ell) =\Phi(f_1,\dots,f_\ell) $.)
\item 
$\Phi$ is homogeneous and isobaric and satisfies the 
\emph{Cayley--Aronhold differential equation}
\begin{equation}\label{cayley}
\gO(\Phi)\=
  \sum_{i=1}^\nn (\nn-i+1) a_{i-1} \frac{\partial\Phi}{\partial a_i}
=0;
\end{equation}
for a joint seminvariant $\Phi(\ax_1,\dots,\ax_\ell)$, 
with $\ax_j=(a_{1,j},\dots,a_{\nn_j,j})$,
the equation takes
the form
\begin{equation}\label{jcayley}
 \gO(\Phi)\=
\sum_{j=1}^\ell  \sum_{i=1}^{\nn_j} (\nn_j-i+1) a_{i-1,j} 
\frac{\partial\Phi}{\partial a_{i,j}}
=0,
\end{equation}
where thus $\gO=\sum_{j=1}^\ell \gO_j$.
\end{romenumerate}
\end{theorem}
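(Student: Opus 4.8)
The plan is to prove the cycle of implications (i) $\Rightarrow$ (ii) $\Rightarrow$ (iv) $\Rightarrow$ (iii) $\Rightarrow$ (i), establishing along the way the isobaric/homogeneity facts that were already discussed informally in the text preceding the statement. Throughout I would treat the single-form case, the joint case being notationally heavier but identical in substance (with $\gO=\sum_j\gO_j$).

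First, (i) $\Rightarrow$ (ii): a seminvariant is by definition homogeneous, and invariance for all $T=\smatrixx{\ga&0\\\gam&\gd}$ with $\ga\gd\ne0$ trivially restricts to invariance for the subgroup $A(1)$ of matrices $\smatrixx{\ga&0\\\gam&1}$. For (ii) $\Rightarrow$ (iv), the key move is to differentiate the invariance identity for $A(1)$ with respect to the one-parameter subgroup of translations $T_t=\smatrixx{1&0\\t&1}$ at $t=0$. Writing out $T_tf(x,y)=f(x+ty,y)=\sumin a_i(x+ty)^{\nn-i}y^i$ and expanding to first order in $t$ gives $a_i(T_tf)=a_i+t(\nn-i+1)a_{i-1}+O(t^2)$ (with $a_{-1}:=0$), so $\frac{d}{dt}\big|_{t=0}\Phi(T_tf)=\sum_i(\nn-i+1)a_{i-1}\partial\Phi/\partial a_i=\gO(\Phi)$; invariance under this subgroup forces $\gO(\Phi)=0$. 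Homogeneity is assumed in (ii); for isobariness one differentiates the invariance identity for the diagonal torus $\smatrixx{1&0\\0&\gd}$ — which lies in $A(1)$ only after combining with the $\ga$-scaling, so here I would instead note that $A(1)$-invariance together with the relation $\nn\nu=m+2w$ forced by $T=\gl I$ actually does \emph{not} directly give isobariness, and this is the point requiring care (see below).

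For (iv) $\Rightarrow$ (iii): since translations $T_t$ form a connected one-parameter group, a homogeneous polynomial $\Phi$ is invariant under all $T_t$ iff it is annihilated by the infinitesimal generator $\gO$; this is the standard fact that a polynomial constant along the orbits of a one-parameter unipotent flow is killed by the corresponding derivation, and conversely $\Phi(T_tf)$ is a polynomial in $t$ whose $t$-derivative is $(\gO\Phi)(T_tf)$, so $\gO\Phi=0$ makes it constant. Isobariness is carried along as a hypothesis. Finally (iii) $\Rightarrow$ (i): given homogeneity, isobariness, and translation-invariance, I factor a general lower-triangular $T=\smatrixx{\ga&0\\\gam&\gd}$ as $\smatrixx{\ga&0\\0&\gd}\smatrixx{1&0\\ \gam/\ga&1}$; isobariness of weight $w$ handles the diagonal part via $\Phi\bigpar{\smatrixx{1&0\\0&\gd}f}=\gd^{\,\cdot}\Phi(f)$ computed from how $a_i\mapsto\gd^i a_i$ scales each monomial, homogeneity of degree $\nu$ handles the $\ga$-scaling giving a factor $\ga^{\nn\nu}$, and translation-invariance handles the unipotent part; collecting exponents and using $\nn\nu=m+2w$ (equivalently $w$ the isobaric weight, $m$ defined by this relation) yields exactly $\Phi(Tf)=\ga^m|T|^w\Phi(f)$ with $|T|=\ga\gd$.

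The main obstacle is the bookkeeping around isobariness and the interplay of the three gradings (degree $\nu$, order $m$, weight $w$): one must verify that "homogeneous $+$ $A(1)$-invariant" already forces isobaric, which is where the torus $\smatrixx{1&0\\0&\gd}\notin A(1)$ makes the argument slightly indirect — the clean route is to observe that the subgroup generated by $A(1)$ and the scalars $\gl I$ is exactly the group of all lower-triangular matrices, and an $A(1)$-invariant homogeneous $\Phi$ decomposes into isobaric pieces each of which is separately translation-invariant (since $\gO$ preserves the weight grading), so one reduces to the isobaric case at the cost of this decomposition remark, exactly as flagged in the Remark about decomposing invariant polynomials into homogeneous (here, isobaric) components. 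Everything else is the routine first-order expansion $f(x+ty,y)$ and exponent-counting already sketched in the paragraph preceding \refT{Tsemi}; I would cite \cite[\S II.2]{Schur} for the classical version and present the differential-equation equivalence as the substantive content.
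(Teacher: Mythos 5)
Your architecture --- factor a lower-triangular matrix into scalar, diagonal and unipotent parts, read homogeneity and isobariness off the first two, and treat the Cayley--Aronhold equation as the infinitesimal form of translation invariance --- is exactly the route the paper takes; the paper only sketches it in the paragraphs preceding the theorem and delegates the rest to \cite[\S II.2]{Schur}, so your explicit first-order expansion $a_i(T_tf)=a_i+t(\nn-i+1)a_{i-1}+O(t^2)$ and the formal-derivative argument for the equivalence of (iii) and (iv) (legitimate in characteristic $0$, since $\Phi(T_tf)$ is a polynomial in $t$ and $\tfrac{d}{dt}\Phi(T_tf)=(\gO\Phi)(T_tf)$) supply precisely the details being omitted.

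The one step that does not close as written is isobariness in (ii)$\Rightarrow$(iv). You assert that homogeneity plus $A(1)$-invariance ``does not directly give isobariness'' and fall back on decomposing $\Phi$ into isobaric pieces that are separately translation-invariant; but that only shows each piece is a seminvariant, not that $\Phi$ has a single piece, which is what (iii) and (iv) claim. In fact the direct argument you set aside works: $\smatrixx{\ga&0\\0&1}$ lies in $A(1)$ and sends $a_i\mapsto\ga^{\nn-i}a_i$, so a monomial of degree $\nu$ and weight $p$ scales by $\ga^{\nn\nu-p}$, and relative invariance $\Phi(Tf)=c_T\Phi(f)$ for all $\ga$ forces $p$ to be the same in every monomial of the homogeneous $\Phi$. (Alternatively, the first half of your own sentence already suffices: $A(1)$ and the scalars generate all lower-triangular matrices, so a homogeneous $A(1)$-invariant is invariant for $\smatrixx{1&0\\0&\gd}$, which the paper has identified with isobariness.) Two smaller slips: $\gO$ does not preserve the weight grading but lowers weight by exactly $1$ (still enough for $\gO\Phi=0$ to split over isobaric components), and in the factorization for (iii)$\Rightarrow$(i) the unipotent parameter should be $\gam/\gd$ if the diagonal factor is written on the left (or put the unipotent factor first with $\gam/\ga$); both are harmless.
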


In (iii), it suffices to consider the special $T=\smatrixx{1&0\\1&1}$, \ie,
$(x,y)\mapsto(x+y,y)$. 

\begin{remark}
Using $\ha_i\=a_i/\binom {\nn}i$ as in \cite{Schur},
\eqref{cayley} becomes 
\begin{equation}
\gO(\Phi)=
  \sum_{i=1}^{\nn} i \ha_{i-1} \frac{\partial\Phi}{\partial \ha_i}
=0.  
  \end{equation}
\end{remark}

\begin{remark}
  There is also a dual differential operator
\begin{equation}\label{gox}
\gOx(\Phi)\=
  \sum_{i=0}^{\nn-1} (i+1) a_{i+1} \frac{\partial\Phi}{\partial a_i}
=  \sum_{i=0}^{\nn-1} (\nn-i) \ha_{i+1} \frac{\partial\Phi}{\partial \ha_i},
\end{equation}
and similarly for joint seminvariants with $\gOx\=\sum_j\gOx_j$.
The differential equation
\begin{equation}\label{cayleyx}
  \gOx(\Phi)=0 
\end{equation}
holds for \emph{invariants} $\Phi$, but not for other seminvariants.
In fact, \eqref{cayleyx} is a necessary and sufficient condition for a
seminvariant $\Phi$ to be an invariant
\cite[\Satze 2.1--2.2]{Schur}.
(Other notations: $\cD\scite{Schur}=\gO$, $\gD\scite{Schur}=\gOx$.)
\end{remark}

Obviously, an invariant is a seminvariant. 
Moreover, there is an important correspondence between covariants and
seminvariants.

\begin{theorem}\label{Tcov}
  For any $\nn$, there is a one-to-one correspondence between
  covariants $\Psi$ and seminvariants $\Phi$, such that $\Phi$ is the
  source 
of $\Psi$; see \eqref{co2semi}--\eqref{semi2co}.

More generally,
for any $\nn_1,\dots,\nn_\ell$, there is a one-to-one correspondence between
joint  covariants and joint seminvariants given by taking the source (leading
coefficient). 

The degrees, order and weight are preserved by this correspondence.
\end{theorem}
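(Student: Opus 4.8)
The plan is to build the two inverse maps explicitly, using the reconstruction formulas \eqref{co2semi}--\eqref{semi2co} that are already established. I describe the one-form case; the joint case is word for word the same, with $f$ replaced by the tuple $(f_1,\dots,f_\ell)$, with $T$ acting simultaneously on all of them, and with $\gO$, $\gOx$ replaced by $\sum_j\gO_j$, $\sum_j\gOx_j$. First, if $\Psi$ is a covariant of degree $\nu$, order $\mm$, weight $w$, set $\Phi(f)\=\Psi(f;1,0)$. Taking $\xx=(\ga,0)=(1,0)T$ with $T=\smatrixx{\ga&0\\\gam&\gd}$ in \eqref{cinv} and using that $\Psi$ is homogeneous of degree $\mm$ in $\xx$ gives $\Phi(Tf)=\Psi(Tf;1,0)=|T|^w\Psi(f;\ga,0)=\ga^{\mm}|T|^w\Phi(f)$, which is \eqref{sinv}; since $\Phi$ is also homogeneous of degree $\nu$ in $\ax$, it is a seminvariant with the same $\nu,\mm,w$. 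The map $\Psi\mapsto\Phi$ is injective because \eqref{semi2co} recovers $\Psi$ from $\Phi$. So the whole theorem reduces to surjectivity, the inverse map being forced to be \eqref{semi2co}.

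For surjectivity, let $\Phi$ be a seminvariant of degree $\nu$, order $\mm$, weight $w$ (so $\nn\nu=\mm+2w$ by \eqref{cweight}), and define $\Psi(f;x,y)\=x^{\mm}\Phi\bigpar{T_{x,y}^{(3)}f}$ with $T_{x,y}^{(3)}=\smatrixx{1&y/x\\0&1}$. Then $\Psi(f;1,0)=\Phi\bigpar{\smatrixx{1&0\\0&1}f}=\Phi(f)$, so once we know $\Psi$ is a covariant of degree $\nu$, order $\mm$, weight $w$, it has $\Phi$ as its source and the two maps are mutually inverse. Two things remain: (a) $\Psi$ is a polynomial in $\xx$ and $\ax$; (b) $\Psi$ satisfies \eqref{cinv}.

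Granting (a), property (b) holds for \emph{every} $T\in GL(2)$ by a direct matrix computation. Writing $(x',y')=\xx T\qw$, one checks $T_{x',y'}^{(3)}T=L\,T_{x,y}^{(3)}$ with $L=\smatrixx{p&0\\\gam&r}$ lower triangular, where $p=(\det T)\,x/(\gd x-\gam y)$ and $r=(\gd x-\gam y)/x$, so that $\det L=pr=\det T$ and $x'p=x$. Since $\Phi$ is a seminvariant, $\Phi(Lg)=p^{\mm}(\det L)^w\Phi(g)$, whence $\Psi(Tf;x',y')=(x')^{\mm}\Phi\bigpar{L\,T_{x,y}^{(3)}f}=(x'p)^{\mm}(\det T)^w\Phi\bigpar{T_{x,y}^{(3)}f}=|T|^w\Psi(f;\xx)$, which is \eqref{cinv}. (This identity is first obtained where $\gd x-\gam y\ne0$ and then holds identically; note that only seminvariance of $\Phi$ is used, not full invariance.)

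The remaining point (a) is where the work is. Put $\Phi_k\=(\gOx)^{k}\Phi$ and $T_t\=\smatrixx{1&t\\0&1}$. Differentiating $T_tf(u,v)=f(u,tu+v)$ in $t$ gives $\tfrac{d}{dt}a_j(T_tf)=(j+1)a_{j+1}(T_tf)$, hence $\tfrac{d}{dt}\Phi(T_tf)=(\gOx\Phi)(T_tf)$ by \eqref{gox}, and so by Taylor expansion $\Phi\bigpar{T_{x,y}^{(3)}f}=\sum_{k\ge0}\tfrac1{k!}\Phi_k(f)\,(y/x)^{k}$, a finite sum. Thus $\Psi$ is a polynomial if and only if $\Phi_{\mm+1}=0$, and this is exactly where the Cayley--Aronhold equation $\gO\Phi=0$ (\refT{Tsemi}(iv)) is used. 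From the classical commutation relation $[\gO,\gOx]=H$, where $H\=\sum_i(\nn-2i)\,a_i\,\pd/\pd a_i$ acts on an isobaric degree-$\nu$ weight-$p$ polynomial as the scalar $\nn\nu-2p$, an easy induction on $k$ gives $\gO\Phi_k=k(\mm-k+1)\Phi_{k-1}$ (using $\gO\Phi=0$ and that $\Phi_k$ has weight $w+k$, hence $H$-eigenvalue $\mm-2k$). If $N$ is the largest index with $\Phi_N\ne0$ --- finite, since a nonzero isobaric polynomial of degree $\nu$ has weight at most $\nn\nu$ --- then applying $\gO$ to $\Phi_{N+1}=0$ forces $(N+1)(\mm-N)=0$, i.e.\ $N=\mm$, so $\Phi_{\mm+1}=0$. (The commutation identity and this consequence are standard; see \cite[\S II]{Schur}, and \cite{KR} for the $SL_2$-module structure behind it.) Finally $\Psi$ is homogeneous of degree exactly $\mm$ in $\xx$ (each term above carries $x^{\mm-k}y^{k}$, with the $k=0$ term $\Phi\ne0$) and of degree $\nu$ in $\ax$, and has weight $w$ by \eqref{cinv}; so $\nu,\mm,w$ are all preserved, and the proof is complete. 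The one genuine obstacle is this polynomiality, equivalently the vanishing $\Phi_{\mm+1}=0$; everything else is bookkeeping with \eqref{co2semi}--\eqref{semi2co}.
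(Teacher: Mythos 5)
The paper states \refT{Tcov} without proof, deferring to \cite{Schur}; the remark following it records exactly the reconstruction formula $\Psi=\sum_j(\gOx)^j(\Phi)\,x^{\mm-j}/j!$ and the vanishing $(\gOx)^{\mm+1}\Phi=0$ that your argument establishes. Your proof is correct and is essentially a careful filling-in of that classical route: the triangular factorization $T^{(3)}_{x',y'}T=L\,T^{(3)}_{x,y}$ checks out, as do the commutation relation $[\gO,\gOx]=H$, the induction $\gO\Phi_k=k(\mm-k+1)\Phi_{k-1}$, and the conclusion $N=\mm$ (valid in characteristic $0$, as the paper assumes).
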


\begin{remark}
  Another way to recover the covariant $\Psi$
from its source $\Phi$ is by the formula \cite[pp. 56--58]{Schur}
  \begin{equation}
\Psi=
\sum_{j=0}^{\mm} \frac{(\gOx)^j(\Phi)}{j!} x^{\mm-j},
  \end{equation}
where $\mm$ is the order. Since further $(\gOx)^{\mm+1}\Phi=0$, the sum can
also be written $\sumj (\gOx)^j	(\Phi)\, x^{\mm-j}/j!$\,.
\end{remark}

\begin{remark}
  We have defined the weight of a covariant so that it equals the weight of
  its source. 
It is easy to see, arguing as for \refT{Tsemi}, that if we give $x$ weight 1
and $y$ weight 0, then a covariant is isobaric, with each term of weight
$w+\mm$.
(Some references, \eg{} \cite{Glenn}, call our $w$ the \emph{index} of the
covariant, and call $w+\mm$ the \emph{weight}, but we do not make this
definition.
Note that if we instead give $x$ weight 0 and $y$ weight $-1$, then the
covariant is isobaric with weight $w$.)
\end{remark}

\begin{example}
  \label{Efs}
The source of $f(\xx)$, \ie, 
the seminvariant corresponding to $f(\xx)$, see \refE{Ef}, is $a_0$. 
This has degree 1, order $\nn$, weight 0.
\end{example}

\begin{example}
  \label{EHessians}
The Hessian seminvariant $H_0$ is the source of the Hessian covariant $H$ in
\refE{EHessian}. It 
is, by a simple calculation,
\begin{equation}\label{hessians}
H_0(f)\=
  2\nn(\nn-1)a_0a_2-(\nn-1)^2a_1^2
=n^2(n-1)^2(\ha_0\,\ha_2-\ha_1^2).
\end{equation}
$H_0$ has degree 2, order $2\nn-4$ and weight 2.
\end{example}

\begin{example}
  \label{EJacobians}
The Jacobian joint seminvariant 
of two binary forms 
$f(x)=\sumini a_i x^{\nn_1-i}y^i$ and
$g(x)=\suminii b_i x^{\nn_2-i}y^i$, 
corresponding to the Jacobian joint covariant in
\refE{EJacobian}, 
is, by a simple calculation,
\begin{equation}
\nn_1a_0b_1-\nn_2b_0a_1
=\nn_1\nn_2(\ha_0\hb_1-\hb_0\ha_1).
\end{equation}
This has degrees $\nu_1=\nu_2=1$, order $\nn_1+\nn_2-2$ and weight 1.
\end{example}

\begin{example}\label{EGunds}
  The source $g_k$ of the $k$th Gundelfinger covariant in \refE{EGund}
is, by \eqref{gund},
  \eqref{co2semi} 
  and  \eqref{hadiff},
\begin{equation}\label{gunds}
  \begin{split}
  g_k(f)
&=G_k(f;1,0)
=\lrabs{\frac{\pd^{2k}f}{\pd x^{2k-i-j}\pd y^{i+j}}(1,0)}_{0\le i,j\le k}
\\&
=\lrabs{\frac{1}{(n-2k)!}\frac{\pd^{n}f}{\pd x^{n-i-j}\pd y^{i+j}}}_
 {0\le i,j\le k}
\\&
=\bigpar{(n)_{2k}}^{k+1}\bigabs{\ha_{i+j}}_{i,j=0}^k.
  \end{split}
\end{equation}
This is a seminvariant of degree $\nu=k+1$, order $\mm=(k+1)(n-2k)$ and weight
$w=k(k+1)$ by \refE{EGund}.

Cf.\ the special cases in \refE{EHessians} ($k=1$) and \refE{EHankel} ($n=2k$).
\end{example}

\begin{example}
  \label{Etranss}
The source $\tau_k(f,g)$ of the transvectant $\tv{f,g}_k$ is 
given by,
using \eqref{co2semidiff}, \eqref{trans}, \eqref{trans=a},
\eqref{apolardiff}, \eqref{adiff} and \eqref{ha},
\begin{equation}\label{etranss}
  \begin{split}
\tau_k&(f,g)= \frac{1}{(\nn_1+\nn_2-2k)!}\pd_1^{\nn_1+\nn_2-2k}
\Bigpar{
\Bigpar{
\frac{\pd}{\pd x_1}\frac{\pd}{\pd y_2}-\frac{\pd}{\pd x_2}\frac{\pd}{\pd y_1}}^k
 f(\xx)g(\yy)\Big|_{\yy=\xx}}
\\&
= \frac{1}{(\nn_1+\nn_2-2k)!}
\Bigpar{
\Bigpar{\frac{\pd}{\pd x_1}+\frac{\pd}{\pd y_1}}^{\nn_1+\nn_2-2k}
\Bigpar{
\frac{\pd}{\pd x_1}\frac{\pd}{\pd y_2}-\frac{\pd}{\pd x_2}\frac{\pd}{\pd y_1}}^k
f(\xx)g(\yy)\Big|_{\yy=\xx}}
\\&
= \frac{1}{(\nn_1+\nn_2-2k)!}
\Bigpar{
\frac{\pd}{\pd x_1}\frac{\pd}{\pd y_2}-\frac{\pd}{\pd x_2}\frac{\pd}{\pd y_1}}^k
\Bigpar{\frac{\pd}{\pd x_1}+\frac{\pd}{\pd y_1}}^{\nn_1+\nn_2-2k}
f(\xx)g(\yy)\Big|_{\yy=\xx}
\\&
= \frac{1}{(\nn_1-k)!\,(\nn_2-k)!}
\Bigpar{
\frac{\pd}{\pd x_1}\frac{\pd}{\pd y_2}-\frac{\pd}{\pd x_2}\frac{\pd}{\pd y_1}}^k
\pd_1^{\nn_1-k}f(\xx)\pd_1^{\nn_2-k}g(\yy)\Big|_{\yy=\xx}
\\&
= \frac{1}{(\nn_1-k)!\,(\nn_2-k)!}
\bigtv{
\pd_1^{\nn_1-k}f,\,\pd_1^{\nn_2-k}g}_k
\\&
= \frac{k!}{(\nn_1-k)!\,(\nn_2-k)!}
A\dgx{k}
\bigpar{
\pd_1^{\nn_1-k}f,\,\pd_1^{\nn_2-k}g}
\\&
= \frac{k!}{(\nn_1-k)!\,(\nn_2-k)!}
\sum_{i=0}^k(-1)^i
\bigpar{\pd_1^{\nn_1-i}f\cdot\pd_1^{\nn_2-k+i}g}(0,1)
\\&
= \frac{k!}{(\nn_1-k)!\,(\nn_2-k)!}
\sum_{i=0}^k(-1)^i
(\nn_1-i)!\,a_i\,(\nn_2-k+i)!\,b_{k-i}
\\&
= \frac{n_1!\,n_2!}{(\nn_1-k)!\,(\nn_2-k)!}
\sum_{i=0}^k(-1)^i\binom{k}{i}
\ha_i\,\hb_{k-i}.
  \end{split}
\raisetag{ 1.5\baselineskip}
\end{equation}

Note from \refE{Etrans} 
that $\tau_1(f,g)$ is the Jacobian seminvariant in \refE{EJacobians},
and $\tau_2(f,f)=2\,H_0(f)$, the Hessian seminvariant in \refE{EHessians}, 
while if $n_1=n_2=n$,  then
$\tau_n(f,g)=\tv{f,g}_n$ is $n!\,A(f,g)$, the apolar invariant in
\refE{Eapolar}. Further, the special case $n_2=k$ yields the source of the
apolar covariant in \refE{Eapolarcov}.
\end{example}

The group $GL(2)$ is generated by the subgroup $A(1)$ and the reflection
$\rho=\smatrixx{0&1\\1&0}$ which interchanges $x$ and $y$. Hence, $\Phi$ is
invariant if and only it is invariant under both $A(1)$ and $\rho$, \ie, 
if and only if it is a seminvariant that is invariant under $\rho$.
We have
$\rho(x,y)=(y,x)$ and thus, by \eqref{Tf}, $\rho f(x,y)=f(y,x)$. 
We denote $\rho f$ by $f\refl$.
It follows
from \eqref{f} that if $f$ has coefficients $\ax=(a_0,\dots,a_{\nn})$ as in
\eqref{f}, then $f\refl$ has coefficients 
\begin{equation}
  \label{arefl}
 \ax\refl\=(a_{\nn},\dots,a_0).
\end{equation}
This leads to the following companion to \refT{Tsemi}, which can be used
together with \refT{Tsemi} to find convenient criteria for invariants.

\begin{theorem}\label{Tinv}
The following are equivalent for a polynomial $\Phi$ in the coefficients of
one or several binary forms.
\begin{romenumerate}
\item 
$\Phi$ is a (joint) invariant.   
\item 
$\Phi$ is a (joint) seminvariant and $\Phi(\ax\refl)=(-1)^w\Phi(\ax)$
or
$\Phi(\ax_1\refl,\dots,\ax_\ell\refl)\allowbreak=(-1)^w\Phi(\ax_1,\dots,\ax_\ell)$.
\item 
$\Phi$ is a (joint) seminvariant of order $\mm=0$.
\item 
$\Phi$ is a (joint) seminvariant and $\nn\nu=2w$ or $\nn_1\nu_1+\dots+\nn_\ell \nu_\ell=2w$
for the degree(s) and the weight (\ie, \eqref{weight} or \eqref{jweight}
holds). 
\end{romenumerate}
\end{theorem}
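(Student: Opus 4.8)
The plan is to run the cycle of implications (i)$\Rightarrow$(ii)$\Rightarrow$(iv)$\Rightarrow$(iii)$\Rightarrow$(i), treating the single-form case explicitly; the joint case is word-for-word the same once $\nn\nu$ is replaced by $\nn_1\nu_1+\dots+\nn_\ell\nu_\ell$ and the reflection $\refl$ is applied to every $\ax_j$. Throughout one may assume $\Phi\not\equiv0$. For (i)$\Rightarrow$(ii): restricting \eqref{inv} to matrices $\smatrixx{\ga&0\\\gam&\gd}$ is exactly \eqref{sinv} with $\mm=0$, and $\Phi$ is homogeneous, so an invariant is a seminvariant (of order $0$); and taking $T=\rho=\smatrixx{0&1\\1&0}$ in \eqref{inv}, using $|\rho|=-1$ together with the fact from \eqref{arefl} that $\rho f$ has coefficient vector $\ax\refl$, gives $\Phi(\ax\refl)=(-1)^w\Phi(\ax)$ (and likewise for joint invariants, applying $\rho$ to each form).

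For (ii)$\Rightarrow$(iv): by the discussion preceding \refT{Tsemi}, a seminvariant is homogeneous and isobaric of weight $w$. Under the reflection the coefficient $a_i$ is replaced by $a_{\nn-i}$, which has weight $\nn-i$; hence a monomial of degree $\nu$ and weight $w$ is sent to one of weight $\nn\nu-w$, so $\Phi(\ax\refl)$ is isobaric of weight $\nn\nu-w$. Since $(-1)^w\Phi(\ax)$ is isobaric of weight $w$ and $\Phi\not\equiv0$, the identity in (ii) forces $\nn\nu-w=w$, which is \eqref{weight}. (In the joint case one tracks the total weight, which changes from $w$ to $\sum_j\nn_j\nu_j-w$, giving \eqref{jweight}.)

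For (iv)$\Rightarrow$(iii): relation \eqref{cweight} (resp.\ \eqref{jcweight}) holds for seminvariants, i.e.\ $\nn\nu=\mm+2w$, and combined with $\nn\nu=2w$ this gives $\mm=0$; the reverse computation gives (iii)$\Rightarrow$(iv), so in fact (iii) and (iv) are trivially equivalent. For (iii)$\Rightarrow$(i): by \refT{Tcov}, a seminvariant $\Phi$ of order $\mm=0$ is the source of a (joint) covariant $\Psi$ of order $0$; being homogeneous of degree $0$ in $\xx$, such a $\Psi$ is independent of $\xx$, so $\Psi(f;\xx)=\Psi(f;1,0)=\Phi(f)$, and the covariant identity \eqref{cinv} (resp.\ \eqref{jcinv}) collapses to $\Phi(Tf)=|T|^w\Phi(f)$ for all $T\in GL(2)$, i.e.\ $\Phi$ is an invariant.

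Modulo the quoted \refT{Tsemi} and \refT{Tcov}, the only step carrying real content is (ii)$\Rightarrow$(iv); the point to be careful about is the weight bookkeeping under the reflection — that it is the seminvariant weight $w$, not $w+\mm$, that indexes the isobaric grading, and, in the joint case, that the relevant quantity is the total weight rather than the weights in the individual forms. A more computational variant of (ii)$\Rightarrow$(iii) avoids isobaric considerations: from the operator identity $\rho\smatrixx{\gd&0\\0&1}=\smatrixx{1&0\\0&\gd}\rho$ one gets $\Phi\bigpar{\rho\smatrixx{\gd&0\\0&1}f}=\Phi\bigpar{\smatrixx{1&0\\0&\gd}\rho f}$, and evaluating both sides by \eqref{sinv} and the hypothesis of (ii) yields $\gd^{\mm+w}=\gd^w$ for all $\gd\ne0$, hence $\mm=0$.
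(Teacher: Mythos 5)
Your proof is correct, but it closes the loop differently from the paper at the one place where real work is needed. The paper proves (i)$\iff$(ii) directly by observing that $GL(2)$ is generated by the lower-triangular subgroup together with $\rho=\smatrixx{0&1\\1&0}$, so that invariance under $A(1)$ plus the reflection identity already forces full $GL(2)$-invariance; it then gets (i)$\iff$(iii) from \refT{Tcov} and (iii)$\iff$(iv) from \eqref{cweight}. You instead avoid the generation argument entirely: you only use the easy direction (i)$\Rightarrow$(ii), and you return from (ii) to (i) via the chain (ii)$\Rightarrow$(iv)$\Rightarrow$(iii)$\Rightarrow$(i), where the new ingredient is the observation that the reflection $a_i\mapsto a_{\nn-i}$ carries an isobaric polynomial of degree $\nu$ and weight $w$ to one of weight $\nn\nu-w$, so the identity $\Phi(\ax\refl)=(-1)^w\Phi(\ax)$ forces $\nn\nu=2w$. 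That weight bookkeeping is sound (including your caveat that it is $w$, not $w+\mm$, that grades seminvariants, and that in the joint case one tracks the total weight), and your alternative computational route to (ii)$\Rightarrow$(iii) via $\rho\smatrixx{\gd&0\\0&1}=\smatrixx{1&0\\0&\gd}\rho$ also checks out. What the paper's approach buys is conceptual economy — the reflection condition in (ii) is explained as exactly what is missing to upgrade $A(1)$-invariance to $GL(2)$-invariance; what yours buys is that the passage from (ii) back to (i) leans only on the isobaric characterization in \refT{Tsemi} and on \refT{Tcov}, which is already needed for (iii)$\Rightarrow$(i), so no separate group-generation fact has to be invoked.
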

\begin{proof}
  (i)$\iff$(ii) by the discussion above. 

 (i)$\iff$(iii) by the correspondence in \refT{Tcov} and the fact that 
an invariant is a   covariant of order 0 and conversely.

(iii)$\iff$(iv) by \eqref{cweight} and \eqref{jcweight}.
\end{proof}

It is obvious that we can take linear combinations of
(joint) invariants, covariants or seminvariants with the same degrees,
weights and orders. Furthermore, a product of 
(joint) invariants, covariants or seminvariants 
is always another invariant, covariant or seminvariant, with
degrees, weights and orders in the factors added. 
Consequently, an isobaric polynomial in 
invariants is another invariant; the same is
true for covariants and seminvariants provided the result also is
homogeneous in the coefficients $a_0,\dots,a_{\nn}$.

We say that a set $\cB$ of invariants (\etc{}) is a \emph{basis} if every
invariant 
(\etc{}), of forms of the given degree(s), is a (necessarily isobaric)
polynomial in elements of
$\cB$. (Less formally, one also says that the invariants in $\cB$ are all
invariants, thus really meaning that every invariant is a polynomial of
invariants in $\cB$.)
It is proved by Gordan (and more generally by Hilbert), that for any $n$,
there exists a finite basis of the invariants (covariants or 
seminvariants).

We have also the following.

\begin{theorem}\label{Tcov2}
  A covariant of a sequence of covariants $\Psi_1(\ax_1,\dots,\ax_l;\xx)$,
$\Psi_2(\ax_1,\dots,\ax_l;\xx)$, \dots{} is itself a covariant.
\end{theorem}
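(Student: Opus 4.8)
Theorem~\ref{Tcov2} asserts that if $\Psi_1, \Psi_2, \dots$ are covariants of forms $f_1, \dots, f_l$ (of respective degrees $\nn_1, \dots, \nn_l$), then any covariant $\Theta(\Psi_1, \Psi_2, \dots; \xx)$ formed from them --- i.e.\ any polynomial in $\xx$ and the coefficients of the $\Psi_j$ (viewed as forms in $\xx$) satisfying the covariance condition \eqref{jcinv} with respect to the $\Psi_j$ --- is again a covariant of $f_1, \dots, f_l$ directly. The plan is simply to trace through the transformation laws and verify \eqref{cinv} (or its joint version \eqref{jcinv}) for the composite.

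First I would set up notation: each $\Psi_j = \Psi_j(f_1, \dots, f_l; \xx)$ is homogeneous of some order $\mm_j$ in $\xx$, so we may regard $\Psi_j$ as a binary form of degree $\mm_j$ in $(x,y)$ whose coefficients are polynomials in the $\ax_1, \dots, \ax_l$. The key observation is the compatibility of the two group actions: by \eqref{jcinv} applied to $\Psi_j$, for any $T \in GL(2)$ we have $\Psi_j(Tf_1, \dots, Tf_l; \xx) = |T|^{w_j}\, \Psi_j(f_1, \dots, f_l; \xx T)$ when we substitute $\xx \mapsto \xx$ and $\xx T\qw \mapsto \xx T\qw$ appropriately --- more precisely, writing $\Psi_j^{T}(\xx) \= \Psi_j(Tf_1, \dots, Tf_l; \xx)$, the defining relation \eqref{jcinv} says exactly that $\Psi_j^{T}(\xx T\qw) = |T|^{w_j} \Psi_j(f_1, \dots, f_l; \xx)$, i.e.\ $T$ acting on the forms $f_i$ transforms $\Psi_j$ (as a form in $\xx$) by $T$ itself up to the scalar $|T|^{w_j}$. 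So the passage $f_i \mapsto Tf_i$ induces on the form $\Psi_j$ the operation $\Psi_j \mapsto |T|^{w_j}\, T\Psi_j$ in the sense of \eqref{Tf}.

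The main step is then the substitution. Let $\Theta$ be a covariant of the sequence $\Psi_1, \Psi_2, \dots$, homogeneous of degree $d_j$ in the coefficients of $\Psi_j$, of order $\mm$ in $\xx$, and of weight $w'$, so that $\Theta(T\Psi_1, T\Psi_2, \dots; \xx T\qw) = |T|^{w'}\Theta(\Psi_1, \Psi_2, \dots; \xx)$. Now replace each $f_i$ by $Tf_i$. By the previous paragraph this replaces the form $\Psi_j$ by $|T|^{w_j} T\Psi_j$; since $\Theta$ is homogeneous of degree $d_j$ in the coefficients of $\Psi_j$, the scalar factors pull out as $\prod_j |T|^{w_j d_j}$, and the $T\Psi_j$ are handled by the covariance of $\Theta$. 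Hence, writing $\Theta(f_1,\dots,f_l;\xx) \= \Theta(\Psi_1(f_\bullet;\xx), \Psi_2(f_\bullet;\xx), \dots; \xx)$, we get
\begin{equation*}
\Theta(Tf_1, \dots, Tf_l; \xx T\qw)
= \Bigpar{\prod_j |T|^{w_j d_j}}\, \Theta(T\Psi_1, T\Psi_2, \dots; \xx T\qw)
= |T|^{w' + \sum_j w_j d_j}\, \Theta(f_1, \dots, f_l; \xx),
\end{equation*}
which is precisely \eqref{jcinv} with weight $w' + \sum_j w_j d_j$. Homogeneity in each $\ax_i$ and in $\xx$ is clear since it is inherited from the homogeneity of the $\Psi_j$ and of $\Theta$, and polynomiality is preserved under substitution of polynomials into a polynomial.

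I do not expect a serious obstacle here; the content is entirely bookkeeping with the transformation laws, and the one point that deserves care is the compatibility claim in the second paragraph --- that the $GL(2)$-action on the base forms $f_i$ induces, via any covariant $\Psi_j$, the same $GL(2)$-action on $\Psi_j$ regarded as a form, twisted only by a power of $|T|$. This is exactly what \eqref{jcinv} encodes, so once it is stated cleanly the rest follows by tracking the determinant factors through the multilinearity of $\Theta$ in the coefficient-vectors of the $\Psi_j$. One should also remark, for completeness, that the argument applies verbatim when some of the $\Psi_j$ are joint covariants of $f_1, \dots, f_l$ of different degrees, and in particular when the original $f_i$ themselves are included among the $\Psi_j$ (each $f_i$ being a covariant of itself by \refE{Ef}), so that a covariant of the $f_i$ together with covariants built from them is again a covariant.
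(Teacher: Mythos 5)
Your argument is correct: the key compatibility observation, that replacing each $f_i$ by $Tf_i$ transforms $\Psi_j$ (viewed as a form in $\xx$) into $|T|^{w_j}\,T\Psi_j$, is exactly what \eqref{jcinv} encodes, and tracking the determinant powers through the multihomogeneity of $\Theta$ then gives \eqref{jcinv} for the composite with weight $w'+\sum_j w_jd_j$ (consistent with, e.g., Examples~\ref{EAf2} and~\ref{EAHH}). The paper states this theorem without proof, and your direct verification is the standard argument one would supply.
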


\begin{example}\label{EAf2}
  As said in \refE{Ef}, the form $f(\xx)$ itself is a covariant of degree
  $1$, order $\nn$ and weight $0$. Thus $f^2$ is a covariant of degree 2,
  order $2\nn$ and weight 0.
Hence, see \refE{Eapolar1}, the apolar invariant $A(f^2,f^2)$ is an invariant (note
that $2\nn$ is even); it is easily seen that this invariant has
degree 4 and weight $2\nn$, \cf{} \eqref{weight}.
(It is shown in \cite[p.~42]{Schur} that $A(f^2,f^2)$ does not vanish
identically 
for any $\nn\ge2$.)
\end{example}

\begin{example}
  \label{EAHH}
The Hessian covariant $H(f;\xx)$ in \refE{EHessian} has degree $2$ and order
$2\nn-4$; hence the apolar invariant $A(H(f;\xx),H(f;\xx))$ is an invariant of degree 4
and, by \eqref{weight},  weight $2\nn$.
(It is shown in \cite[p.~43]{Schur} that $A(H(f),H(f))$ does not vanish
identically for any $\nn\ge2$.)
\end{example}

\subsection{Rational invariants}\label{SSrational}
By definition, invariants \etc{} are required to be polynomials in the
coefficients. A few times we will consider a minor extension.

\begin{definition}\label{Drational}
  A \emph{rational invariant} (joint invariant, covariant, \etc) is 
a rational function
  of the coefficients that has the invariance property \eqref{inv} (\etc).
\end{definition}

It is easily seen that a rational invariant is the same as a 
  quotient of two invariants (\etc) \cite[Satz 1.4]{Schur}.
Note that a rational invariant may be infinite or undefined for certain
values of the coefficients.

We will in the sequel sometimes use rational seminvariants of the form
$a_0^{-k}\Phi$, where $\Phi$ is a seminvariant.
Another interesting case is the following.
\begin{definition}
An \emph{absolute invariant} is a rational invariant with weight $w=0$;
it is thus a rational function of the coefficients that satisfies
  \begin{equation}\label{absinv}
	\Phi(Tf)=\Phi(f)
  \end{equation}
for all $T\in GL(2)$. 
\end{definition}
  
By \eqref{inv}, there are no non-trivial invariants that are absolute
invariants; we have to consider rational invariants here. Any absolute
invariant is the quotient $\Phi_1/\Phi_2$ of two invariants of the 
same weight, and thus the also the same degree; conversely, any such
quotient is an absolute invariant. One example is given in \refE{E4abs}.

\subsection{Dimensions}

The set of all covariants of degree $\nu$ and weight $w$ of binary forms of
a given degree $n$ is a linear space. We let $N(n,\nu,w)$ be its dimension,
\ie, the number of linearly independent covariants of this degree and
weight. By \refT{Tcov}, $N(n,\nu,w)$ is also the dimension of the linear
space of seminvariants of degree $\nu$ and weight $w$. (\refT{Tcov} yields
an isomorphism between the two linear spaces.)
Note that we get the invariants of degree $\nu$ by taking $w=n\nu/2$
(provided this is an integer), see \eqref{weight} and \eqref{cweight}.

The number $N(n,\nu,w)$ can be computed as follows by a formula by Cayley (the
first complete proof was given  by Sylvester), see 
\cite[\Satze 2.21--2.22]{Schur}. 

Let $\GQ nk$ be the \emph{Gaussian polynomial} defined by
\begin{equation}
  \GQ nk \=
\frac{\prod_{i=1}^n(1-q^i)}{\prod_{i=1}^k(1-q^i) \prod_{j=1}^{n-k}(1-q^j)}
=
\frac{\prod_{i=n-k+1}^n(1-q^i)}{\prod_{i=1}^k(1-q^i)}.
\end{equation}
(See further \eg{} \citet{Andrews}.)
We let $[q^w] P(q)$ denote the coefficient of $q^w$ in a polynomial $P(q)$.

\begin{theorem}
  \label{TGauss}
If\/ $2w\le n\nu$, then
  \begin{equation}
	\begin{split}
N(n,\nu,w)
&=[q^w]\lrpar{(1-q)\GQ{n+\nu}{n}}	
=[q^w]\lrpar{(1-q)\GQ{n+\nu}{\nu}}	
\\&	  
=[q^w]
\frac{\prod_{i=\nu+1}^{\nu+n}(1-q^i)}{\prod_{i=2}^n(1-q^i)}
=[q^w]
\frac{\prod_{i=n+1}^{n+\nu}(1-q^i)}{\prod_{i=2}^\nu(1-q^i)}
\\&
=[q^w]\GQ{n+\nu}{n} - [q^{w-1}]\GQ{n+\nu}{n}.
	\end{split}
  \end{equation}
If $2w> n\nu$, then $N(n,\nu,w)=0$.
\end{theorem}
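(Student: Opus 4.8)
The plan is to compute $N(n,\nu,w)$ as the dimension of the space of seminvariants of degree $\nu$ and weight $w$ (using \refT{Tcov}), which by \refT{Tsemi} is the space of isobaric polynomials of degree $\nu$ and weight $w$ in $a_0,\dots,a_n$ killed by the Cayley--Aronhold operator $\gO$ in \eqref{cayley}. First I would introduce the bigraded generating function counting monomials: a monomial $a_0^{k_0}\dotsm a_n^{k_n}$ with $\sum k_i=\nu$ and $\sum i k_i=w$ contributes $s^\nu q^w$, and summing over all monomials gives $\prod_{i=0}^n (1-s q^i)^{-1}$. The coefficient of $s^\nu$ in this product is $1/\prod_{i=1}^\nu(1-q^i)\cdot\big[$a $q$-binomial-type expression$\big]$; more precisely, by the $q$-binomial theorem (Cauchy identity), $[s^\nu]\prod_{i=0}^n(1-sq^i)^{-1} = q^{0}\GQ{n+\nu}{\nu}/\prod$... — the clean statement is that the number of monomials of degree $\nu$ and weight $w$ in $a_0,\dots,a_n$ equals $[q^w]\GQ{n+\nu}{n}$, the Gaussian binomial coefficient counting partitions in an $\nu\times n$ box.

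Next I would use the key representation-theoretic fact (equivalently, the structure of the $\mathfrak{sl}_2$-action generated by $\gO$ and the dual operator $\gOx$ of \eqref{gox}): on the space $V_{\nu}$ of degree-$\nu$ forms in the coefficients, $\gO$ lowers weight by $1$, $\gOx$ raises it by $1$, and together with the weight-grading operator they form an $\mathfrak{sl}_2$-triple. Seminvariants of weight $w$ are exactly the highest-weight vectors (kernel of $\gO$) at that weight; equivalently, by the standard $\mathfrak{sl}_2$ decomposition, $\dim\ker\gO|_{\text{wt }w} = \dim V_{\nu,w} - \dim V_{\nu,w-1}$ whenever $2w\le n\nu$ (the weights run symmetrically from $0$ to $n\nu$), and $=0$ when $2w>n\nu$. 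Here one must note that $\gOx:V_{\nu,w-1}\to V_{\nu,w}$ is injective in this range — this follows because the whole space $V_\nu$ decomposes into irreducible $\mathfrak{sl}_2$-strings symmetric about weight $n\nu/2$, so raising is injective below the midpoint. Combining with the monomial count gives
\[
N(n,\nu,w) = [q^w]\GQ{n+\nu}{n} - [q^{w-1}]\GQ{n+\nu}{n},
\]
which is the last displayed line of the theorem; the other equalities follow by elementary $q$-series manipulation, writing $[q^w]\GQ{n+\nu}{n}-[q^{w-1}]\GQ{n+\nu}{n} = [q^w]\big((1-q)\GQ{n+\nu}{n}\big)$ and then using $\GQ{n+\nu}{n}=\GQ{n+\nu}{\nu}$ and the product formula for the Gaussian polynomial to cancel $(1-q^1)$ against a factor in the denominator, yielding $\prod_{i=\nu+1}^{\nu+n}(1-q^i)/\prod_{i=2}^n(1-q^i)$ and its symmetric counterpart.

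The main obstacle is the injectivity of the raising operator $\gOx$ on $V_{\nu,w-1}$ for $2w\le n\nu$ (equivalently, that $\gO$ is surjective onto $V_{\nu,w}$ in that range). This is precisely the content that makes Cayley's formula correct and whose first complete proof was Sylvester's; the cleanest modern route is to invoke the complete reducibility of finite-dimensional $\mathfrak{sl}_2$-representations applied to the triple $(\gO,\gOx,H)$ acting on $V_\nu$, where the semisimplicity immediately gives that each weight space below the central weight injects into the next under $\gOx$. One should check the bracket relations $[\gO,\gOx]=H$ (a diagonal operator recording $n\nu-2w$ up to scalar) and $[H,\gO]$, $[H,\gOx]$ by direct computation from \eqref{cayley} and \eqref{gox} — a routine but slightly tedious verification — after which the dimension count and the $q$-binomial bookkeeping are purely formal. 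Since the excerpt attributes the result to Cayley and Sylvester and refers to \cite[\Satze 2.21--2.22]{Schur} for the proof, it is legitimate here to cite that source for the $\mathfrak{sl}_2$ (or direct combinatorial) argument and present only the generating-function reduction in detail.
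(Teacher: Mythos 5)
Your argument is correct in substance, but note that the paper does not actually prove \refT{TGauss}: it states the formula and refers to \cite[\Satze 2.21--2.22]{Schur}, so there is no internal proof to compare against. What you give is the standard Cayley--Sylvester argument, and its three ingredients are all sound: (a) the count of monomials of degree $\nu$ and weight $w$ in $a_0,\dots,a_n$ as $[q^w]\GQ{n+\nu}{n}$ (partitions in a $\nu\times n$ box); (b) the identification, via \refT{Tcov} and \refT{Tsemi}(iv), of $N(n,\nu,w)$ with the dimension of $\ker\gO$ on the degree-$\nu$, weight-$w$ graded piece; and (c) the $\mathfrak{sl}_2$ step, which you rightly isolate as the only nontrivial point. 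The commutator does check out: on $a_i$ one computes $\gO\gOx-\gOx\gO = (i+1)(n-i)-i(n-i+1)=n-2i$, so $[\gO,\gOx]$ acts as $n\nu-2w$ on the $(\nu,w)$ piece, confirming the triple. One small index slip: what the dimension count needs is surjectivity of $\gO\colon V_{\nu,w}\to V_{\nu,w-1}$ for $2w\le n\nu$ (your phrase ``$\gO$ is surjective onto $V_{\nu,w}$'' points at the wrong map), and injectivity of $\gOx$ on $V_{\nu,w-1}$ by itself only yields an inequality of dimensions; however, the full decomposition into irreducible strings that you invoke delivers both statements simultaneously, so the argument goes through. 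The concluding $q$-series manipulations are routine and correct. For comparison, the paper does contain an independent derivation of the special case $\nu\ge w$ (\refC{CGauss}) by counting isobaric monomials in the reduced coefficients, see \refR{Rredall}; your argument covers the general case.
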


It follows that if we fix $n$ and $w$, $N(n,\nu,w)$ is the same for all
$\nu\ge w$, and is given by a simple generating function; see also
\refR{Rredall} below.
\begin{corollary}
  \label{CGauss}
If $n\ge 2$ and $\nu\ge w$, then 
\begin{equation}
  N(n,\nu,w)
=[q^w]
{\prod_{i=2}^n(1-q^i)\qw}.
\end{equation}
\end{corollary}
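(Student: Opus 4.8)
The plan is to specialize \refT{TGauss} to the regime $\nu\ge w$ and show that the generating function stabilizes. First I would start from the product formula
\begin{equation*}
N(n,\nu,w)=[q^w]\frac{\prod_{i=\nu+1}^{\nu+n}(1-q^i)}{\prod_{i=2}^n(1-q^i)},
\end{equation*}
which is valid whenever $2w\le n\nu$ (and this inequality certainly holds when $\nu\ge w$ and $n\ge2$, since then $n\nu\ge 2\nu\ge 2w$). The key observation is that the numerator $\prod_{i=\nu+1}^{\nu+n}(1-q^i)$ is a polynomial all of whose nonconstant terms have degree $\ge \nu+1> w$; hence, modulo $q^{w+1}$, the numerator is congruent to $1$. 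Therefore the coefficient of $q^w$ in the full rational function equals the coefficient of $q^w$ in $\prod_{i=2}^n(1-q^i)^{-1}$, which is exactly the claimed formula. This is essentially a one-line argument once \refT{TGauss} is in hand.

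The one point requiring a little care is the manipulation ``modulo $q^{w+1}$'': I would make this rigorous by writing $\prod_{i=\nu+1}^{\nu+n}(1-q^i)=1+q^{\nu+1}R(q)$ for a polynomial $R(q)$, so that
\begin{equation*}
\frac{\prod_{i=\nu+1}^{\nu+n}(1-q^i)}{\prod_{i=2}^n(1-q^i)}
=\frac{1}{\prod_{i=2}^n(1-q^i)}+\frac{q^{\nu+1}R(q)}{\prod_{i=2}^n(1-q^i)},
\end{equation*}
and the second term, being a power series divisible by $q^{\nu+1}$ with $\nu+1>w$, contributes nothing to $[q^w]$. Extracting $[q^w]$ then gives the result. (One should note that $\prod_{i=2}^n(1-q^i)^{-1}$ is a well-defined formal power series since the denominator has constant term $1$, and for $n\ge2$ the product is nonempty so the statement is non-vacuous.)

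I do not expect any real obstacle here; the corollary is a routine consequence of \refT{TGauss}, and the only thing to verify is the trivial inequality $2w\le n\nu$ needed to invoke that theorem, together with the degree bookkeeping in the numerator. If desired, one can additionally remark that this exhibits $N(n,\nu,w)$ as independent of $\nu$ for $\nu\ge w$, matching the generating function $\prod_{i=2}^n(1-q^i)^{-1}=\sum_{w\ge0}N(n,w,w)q^w$ obtained by also setting $\nu=w$.
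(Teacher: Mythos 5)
Your proposal is correct and is essentially the paper's own proof: the paper's one-line argument is exactly that the factors $1-q^i$ with $i\ge\nu+1>w$ do not affect $[q^w]$, which you have simply spelled out in more detail (including the check that $2w\le n\nu$ holds so \refT{TGauss} applies). Nothing further is needed.
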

\begin{proof}
  The factors $1-q^i$ with $i\ge \nu+1>w$ do not affect $[q^w]$.
\end{proof}

\section{Invariants of polynomials}\label{Spol}

We may identify the binary form $\tf(x,y)=\sumin a_ix^{\nn-i}y^i$ and the
polynomial $f(x)=\sumin a_ix^{\nn-i}$; this gives a one-to-one
correspondence between binary forms of degree $\nn$ and polynomials of degree
(at most) $\nn$ described by $f(x)=\tf(x,1)$ and, conversely,
$\tf(x,y)=y^\nn f(x/y)$. We let $\cP_\nn$ denote the set of all such
polynomials  $\sumin a_ix^{\nn-i}$.

\begin{remark}\label{Rdegree}
When $\nn$ is given, we will say ``polynomial of degree $\nn$''
for any polynomial $\sumin a_ix^{\nn-i}$, even if $a_0=0$. (As just said, 
this gives a correspondence with binary forms of degree $\nn$.)
This thus includes
polynomials of lower degrees. See further \refSS{SSrestr}.
\end{remark}

A transform $T=\smatrixx{\ga&\gb\\\gam&\gd}$ acts on binary forms by
\eqref{Tf}; this transfer to the action
\begin{equation}\label{actpol}
Tf(x)=(\gb x+\gd)^\nn f\Bigparfrac{\ga x+\gam}{\gb x+\gd}   
\end{equation}
on polynomials.

We define a (projective) invariant or seminvariant of a polynomial $f$ (of some
given degree) as an invariant or seminvariant of the
corresponding binary form $\tf$; similarly, a (projective) covariant is a
polynomial $\Psi(\ax;x)$ of some degree $\mm$ (or less) in $x$ such that the
corresponding 
binary form $\widetilde\Psi(\ax;x,y)$ of degree $\mm$ is a covariant of the form
$\tf$; these definitions extend to joint invariants \etc{} in the obvious way.

Thus, a polynomial $\Phi(f)$ in the coefficients of a polynomial
$f(x)=\sumin a_ix^{\nn-i}$ is an 
invariant if
\begin{equation}
\Phi\Bigpar{ (\gb x+\gd)^\nn f\bigparfrac{\ga x+\gam}{\gb x+\gd}}
=(\ga\gd-\gb\gam)^w\Phi\bigpar{f(x,y)}   
\end{equation}
for all $f$ and $\smatrixx{\ga&\gb\\\gam&\gd}$.

Similarly, by \refT{Tsemi}(ii), a polynomial $\Phi(f)$ in the coefficients
of a polynomial $f(x)=\sumin a_ix^{\nn-i}$ is a (projective) seminvariant if
it is homogeneous and invariant for $A(1)$, \ie{}
\begin{equation}
\Phi\bigpar{ f(\ga x+\gam)}
=\ga^{\mm+w}\Phi\bigpar{f(x,y)}   
\end{equation}
for all $f$ and $(\ga,\gam)$. 
In other words, a seminvariant is the same as
an \emph{affine invariant} for polynomials. 
(However, we continue to use the traditional term seminvariant).

The same applies with obvious modifications to joint invariants and
seminvariants.

\begin{example}
  \label{EHessianp}
If $f$ is a polynomial of degree $\nn$ and $\tf$ the corresponding binary
form, then 
$x\frac{\partial \tf}{\partial x}+y\frac{\partial \tf}{\partial y}=\nn\tf$.
It follows after some calculations that, for $y=1$,
\begin{equation*}
  \begin{vmatrix}
\frac{\partial^2 \tf}{\partial x^2}&\frac{\partial^2 \tf}{\partial x\partial y}
\\
\frac{\partial^2 \tf}{\partial x\partial y}&\frac{\partial^2 \tf}{\partial y^2} 
  \end{vmatrix}
=
  \begin{vmatrix}
\frac{\partial^2 \tf}{\partial x^2} &(\nn-1)\frac{\partial \tf}{\partial x} \\
(\nn-1)\frac{\partial \tf}{\partial x} & \nn(\nn-1)\tf
  \end{vmatrix}
=
\nn(\nn-1)\tf\frac{\partial^2 \tf}{\partial x^2}
-(\nn-1)^2\Bigpar{\frac{\partial \tf}{\partial x}}^2.
\end{equation*}
Hence the Hessian covariant, see \refE{EHessian}, of a polynomial $f$ of
degree $\nn$ is the polynomial
\begin{equation}\label{hessianp}
  H(f;x)\=\nn(\nn-1)f(x)f''(x)-(\nn-1)^2(f'(x))^2.
\end{equation}
The source of $H(f)$ is 
a seminvariant $H_0(f)$ of degree 2, order $2\nn-4$ and weight 2;
by \refE{EHessians}, it is given by
\begin{equation}\label{hessian0p}
H_0(f)=
  2\nn(\nn-1)a_0a_2-(\nn-1)^2a_1^2.
\end{equation}
\end{example}

\begin{example}
  \label{EJacobianp}
Similar calculation show that the Jacobian joint covariant of two
polynomials $f$ and $g$ of degrees $n_1$ and $n_2$ is given by
\begin{equation}\label{jacobianp}
J(f,g)=  n_2f'g-n_1fg'.
\end{equation}

In particular, it follows from  \eqref{hessianp} and \eqref{jacobianp} that
\begin{equation}\label{hjf'}
  H(f)=(n-1)J\dgx{n-1,n}(f',f).
\end{equation}
\end{example}

\begin{example}\label{EGundp}
The calculations  in \refE{EHessianp} generalize to the Gundelfinger
covariants in \refE{EGund} and show that the $k$th Gundelfinger covariant of a 
polynomial $f$ of degree $n$ is the determinant
\begin{equation}
  \begin{split}
  G_k(f;x)
&=
\lrabs{\frac{(n-2k+i+j)!}{(n-2k)!}f^{(2k-i-j)}(x)}_{0\le i,j\le k}
\\&
=
\lrabs{\frac{(n-i-j)!}{(n-2k)!}f^{(i+j)}(x)}_{0\le i,j\le k}.
  \end{split}
\end{equation}
\end{example}

Theorems \refand{Tsemi}{Tinv} translate to criteria for 
(joint) invariants and seminvariants of polynomials. For example, we have
the following. 

\begin{theorem}\label{Tsemipol}
A polynomial $\Phi$ in the coefficients of
one or several polynomials is a (joint) seminvariant   
if and only if
$\Phi$ is homogeneous and isobaric and invariant for all 
translations $x\mapsto x+x_0$, \ie, 
$\Phi(f(x+x_0))=\Phi(f(x))$ or
  $\Phi(f_1(x+x_0),\dots,f_\ell(x+x_0)) =\Phi(f_1,\dots,f_\ell) $.
\end{theorem}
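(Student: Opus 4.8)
The plan is to transfer \refT{Tsemi} along the form-to-polynomial dictionary set up in \refS{Spol}. The key point is that the translation $x\mapsto x+x_0$ on polynomials corresponds exactly to the matrix $T=\smatrixx{1&0\\x_0&1}$ acting on the associated binary form. Indeed, from \eqref{actpol} with $\ga=\gd=1$, $\gb=0$, $\gam=x_0$ we get $Tf(x)=f(x+x_0)$, and under the correspondence $\tf(x,y)=y^\nn f(x/y)$ this is the action of $\smatrixx{1&0\\x_0&1}$ on $\tf$, i.e.\ the transformation $(x,y)\mapsto(x+x_0 y,y)$ appearing in \refT{Tsemi}(iii). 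So the hypotheses ``invariant for all translations $x\mapsto x+x_0$'' and ``invariant for all $T$ of the form $\smatrixx{1&0\\t&1}$'' are literally the same condition once one passes between $f$ and $\tf$.

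First I would invoke the definition from \refS{Spol}: a seminvariant of a polynomial $f$ of degree $\nn$ is by definition a seminvariant of the binary form $\tf$. Then apply the equivalence (i)$\iff$(iii) of \refT{Tsemi}: $\Phi$ is a (joint) seminvariant of the binary form(s) if and only if $\Phi$ is homogeneous and isobaric and invariant under all $T=\smatrixx{1&0\\t&1}$. By the paragraph above, the last condition is precisely invariance of $\Phi$ under all translations $x\mapsto x+x_0$ of the polynomial(s). This gives the claimed equivalence directly; the homogeneity and isobaric conditions carry over verbatim since they are statements about the polynomial $\Phi$ in the coefficients $a_i$, which are the same objects for $f$ and for $\tf$. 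For the joint case one uses the joint version of \refT{Tsemi} in the same way, applying the translation simultaneously to $f_1,\dots,f_\ell$.

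The only genuine thing to check — and the step I expect to be the (mild) obstacle — is the verification that translation of the polynomial really is the matrix action $\smatrixx{1&0\\x_0&1}$ on the form, i.e.\ chasing the identification $\tf(x,y)=y^\nn f(x/y)$ through \eqref{Tf}. This is a short computation: $\smatrixx{1&0\\x_0&1}$ sends $(x,y)$ to $(x+x_0y,y)$, so it sends $\tf$ to $\tf(x+x_0y,y)=y^\nn f((x+x_0y)/y)=y^\nn f(x/y+x_0)$, which is the form associated to $f(\cdot+x_0)$. No convergence or characteristic issues arise. Everything else is a direct citation of \refT{Tsemi}, so the proof is essentially one line of bookkeeping plus this identification.
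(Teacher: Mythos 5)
Your proposal is correct and is exactly the argument the paper intends: the theorem is stated there as an immediate translation of \refT{Tsemi}(i)$\iff$(iii) through the form--polynomial dictionary of \refS{Spol}, with no further proof given. Your explicit check that $\smatrixx{1&0\\x_0&1}$ acts on $\tf(x,y)=y^\nn f(x/y)$ as $x\mapsto x+x_0$ on $f$ is the only substantive step, and you carry it out correctly.
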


\begin{theorem}\label{Tinvpol}
The following are equivalent for a polynomial $\Phi$ in the coefficients of
one or several binary forms.
\begin{romenumerate}
\item 
$\Phi$ is a (joint)  invariant.
\item 
$\Phi$ is a (joint) seminvariant and $\Phi(\ax\refl)=\Phi(\ax)$
or
$\Phi(\ax_1\refl,\dots,\ax_\ell\refl)=\Phi(\ax_1,\dots,\ax_\ell)$.
\item 
$\Phi$ is a (joint) seminvariant of order $\mm=0$.
\item 
$\Phi$ is a (joint) seminvariant and $\nn\nu=2w$ or 
$\nn_1\nu_1+\dots+\nn_\ell \nu_\ell=2w$ 
for the degree(s) and the weight (\ie, \eqref{weight} or \eqref{jweight}
holds). 
\end{romenumerate}
\end{theorem}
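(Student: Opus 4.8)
The plan is to read this off from \refT{Tinv} via the correspondence of \refS{Spol}. Recall that each polynomial $f\in\cP_\nn$ is identified with the binary form $\tf(x,y)=y^\nn f(x/y)$ of degree $\nn$, and that, by definition, a polynomial $\Phi$ in the coefficients $\ax$ is an invariant (resp.\ seminvariant, covariant) of $f$ exactly when it is an invariant (resp.\ seminvariant, covariant) of $\tf$, with the same degree $\nu$, order $\mm$ and weight $w$. Conditions (i), (iii) and (iv) mention only the coefficients and the numbers $\nu,\mm,w$, not the variable $x$, so under this identification they become literally conditions (i), (iii), (iv) of \refT{Tinv}; hence the equivalences (i)$\iff$(iii) (via \refT{Tcov}, together with the fact that an invariant is exactly a covariant of order $0$) and (iii)$\iff$(iv) (the weight relations \eqref{weight}, \eqref{jweight}, \eqref{cweight}, \eqref{jcweight}) transfer with no change, and it remains only to treat (ii).

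For (ii) I would argue as for \refT{Tinv}: $GL(2)$ is generated by $A(1)$ together with one reflection interchanging the two ``ends'' of a form or polynomial, and by \refT{Tsemipol} (equivalently \refT{Tsemi}) a seminvariant is precisely a homogeneous polynomial invariant under $A(1)$. Thus a seminvariant $\Phi$ is an invariant if and only if it is in addition invariant under this reflection. On polynomials this reflection carries the coefficient vector $\ax$ to its reversal $\ax\refl$, so invariance of $\Phi$ under it reads exactly $\Phi(\ax\refl)=\Phi(\ax)$ (and its joint analogue), which is condition (ii). Together with the previous paragraph this yields (i)$\iff$(ii).

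I expect no real obstacle: all the content sits in Theorems~\ref{Tsemi}, \ref{Tcov} and~\ref{Tinv}, and the step is a change of language. The only points requiring care are bookkeeping ones --- checking that the identification $f\leftrightarrow\tf$ preserves $\nu$, $\mm$ and $w$ (which is built into the definitions of \refS{Spol} through \refT{Tcov}), and choosing the reflection complementing $A(1)$ with determinant $1$ so that no power of $|T|$ survives in (ii), in contrast to \refT{Tinv}(ii) where $\rho=\smatrixx{0&1\\1&0}$ has $|\rho|=-1$ and a factor $(-1)^w$ appears. In short, the proof should be as brief as that of \refT{Tinv}: translate \refT{Tinv} through \refS{Spol}, using \refT{Tsemipol} for the seminvariant half of (i)$\iff$(ii).
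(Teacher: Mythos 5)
Your translation argument for (i)$\iff$(iii)$\iff$(iv) is exactly what the paper intends: these conditions mention only the coefficients and the numbers $\nu$, $\mm$, $w$, all of which the identification $f\leftrightarrow\tf$ preserves by definition, so they reduce verbatim to \refT{Tinv}(i),(iii),(iv), and the paper offers no proof beyond this translation.

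The gap is in your handling of (ii). The device of replacing $\rho=\smatrixx{0&1\\1&0}$ by a reflection of determinant $1$ does not eliminate the sign. Take $\sigma=\smatrixx{0&1\\-1&0}$ (the map $x\mapsto -1/x$ on polynomials): then $|\sigma|=1$, so the factor $|T|^w$ in \eqref{inv} indeed disappears, but $\sigma f$ has coefficients $\bigpar{(-1)^{j}a_{\nn-j}}_{j=0}^{\nn}$ rather than $\ax\refl$, and since a seminvariant is isobaric of weight $w$ the condition $\Phi(\sigma f)=\Phi(f)$ unwinds to $(-1)^{w}\Phi(\ax\refl)=\Phi(\ax)$ --- the same signed condition as in \refT{Tinv}(ii). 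The sign is intrinsic, and the unsigned condition $\Phi(\ax\refl)=\Phi(\ax)$ is in fact \emph{not} equivalent to (i) in general: the joint apolar invariant $a_0b_1-a_1b_0$ of two linear polynomials (\refE{Eapolar} with $\nn=1$) is a genuine joint invariant of weight $w=1$ that changes sign under coefficient reversal, and for a single form the skew invariant of the quintic (degree $18$, weight $45$) does the same. So the correct form of (ii) must carry the factor $(-1)^w$ exactly as in \refT{Tinv}(ii); the discrepancy is invisible for the degrees $2$--$4$ treated in this paper, where every invariant has even weight, but your proof should not claim to remove the sign --- either keep $(-1)^w$, or note explicitly that the unsigned version is valid only when $w$ is even.
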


Here $\ax\refl$ is given by \eqref{arefl}; if $\ax$ are the coefficients of
$f$, then these are the coefficients of the reflected polynomial
$f\refl(x)\=x^\nn f(1/x)$. 

\subsection{Derivatives}\label{SSderivative}
The derivative $f'(x)$ is \emph{not} a (projective) covariant. (If it were,
it would be 
of order $\mm=\nn-1$ and its source would be $\nn a_0$; however, $\nn a_0$ is a
seminvariant of order $\nn$, not $\nn-1$, so \eqref{cweight} would not hold.)
Nevertheless, it is, as well as higher derivaties $f^{(j)}$, 
obviously invariant under translations (and affine maps),
and \refT{Tsemipol} implies the following, together with the obvious
generalization to joint seminvariants.
\begin{theorem}\label{Tdiff}
  If $\Phi\dgx m$ is a seminvariant of polynomials of degree $m\le \nn$, then 
$\Phi\dgx m(f^{(\nn-m)})$ is a seminvariant  of polynomials of degree $\nn$.
If $\Phi\dgx m$ has degree $\nu$, weight $w$ and order $\mm$, 
then $\Phi(f^{(\nn-m)})$ has the same degree $\nu$ and weight $w$,
while the order is increased to $\mm+(\nn-m)\nu$.
\end{theorem}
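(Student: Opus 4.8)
The plan is to reduce everything to the translation-invariance criterion of \refT{Tsemipol}. First I would observe that the key structural fact is that differentiation commutes with translation: if $g(x) := f(x+x_0)$ then $g^{(\nn-m)}(x) = f^{(\nn-m)}(x+x_0)$, simply by the chain rule. Thus if $\Phi\dgx m$ is translation-invariant on polynomials of degree $m$, the composite $f\mapsto \Phi\dgx m(f^{(\nn-m)})$ is translation-invariant on polynomials of degree $\nn$. Similarly, for a joint seminvariant one applies the same observation coordinatewise.

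Next I would verify the homogeneity and isobaricity required by \refT{Tsemipol}. Homogeneity is immediate: the coefficients of $f^{(\nn-m)}$ are linear in the coefficients $a_0,\dots,a_\nn$ of $f$ (indeed the $j$th coefficient of $f^{(\nn-m)}$ is a constant multiple of $a_{j}$ in the indexing where $f^{(\nn-m)}$ has degree $m$), so $\Phi\dgx m(f^{(\nn-m)})$ is homogeneous of the same degree $\nu$ in the $a_i$. For isobaricity I would track weights: writing $f(x)=\sumin a_i x^{\nn-i}$, one computes $f^{(\nn-m)}(x) = \sum_{i=0}^{m} c_i a_i x^{m-i}$ for suitable nonzero constants $c_i$, so the coefficient of $x^{m-i}$ in $f^{(\nn-m)}$ — which carries weight $i$ in the degree-$m$ world — is a multiple of $a_i$, which also carries weight $i$ in the degree-$\nn$ world. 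Hence the weight of each monomial is preserved, and $\Phi\dgx m(f^{(\nn-m)})$ is isobaric of the same weight $w$. By \refT{Tsemipol} it is therefore a seminvariant of polynomials of degree $\nn$, with the same degree $\nu$ and weight $w$.

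Finally I would pin down the order. The order is not an independent datum once degree and weight are fixed: the relation \eqref{cweight}, $\nn\nu = \mm + 2w$, forces $\mm$. Applying \eqref{cweight} to $\Phi\dgx m$ in degree $m$ gives $m\nu = \mm + 2w$, i.e.\ the original order is $\mm = m\nu - 2w$; applying it in degree $\nn$ to the new seminvariant (whose degree and weight are still $\nu$ and $w$) gives new order $\nn\nu - 2w = (m\nu - 2w) + (\nn-m)\nu = \mm + (\nn-m)\nu$, exactly as claimed. For the joint case one uses \eqref{jcweight} in the same way, with $\sum_j \nn_j\nu_j$ replacing $\nn\nu$.

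The main obstacle is purely bookkeeping rather than conceptual: one must be careful that the indexing conventions for "polynomial of degree $m$" versus "degree $\nn$" line up so that the weight of a coefficient (its subscript $i$) is genuinely unchanged when $f$ is replaced by $f^{(\nn-m)}$ — this is the point where the convention of \refR{Rdegree}, allowing leading coefficients to vanish, is used, since $f^{(\nn-m)}$ of a generic degree-$\nn$ polynomial does have nonzero leading term but we are free to view it inside $\cP_m$ regardless. Once that alignment is checked, the rest follows mechanically from \refT{Tsemipol} and \eqref{cweight}.
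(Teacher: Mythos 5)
Your proposal is correct and follows essentially the same route as the paper: the paper also deduces seminvariance from the translation-invariance criterion of \refT{Tsemipol} (noting just before the theorem that derivatives are translation invariant) and obtains the order formula from \eqref{cweight}. You merely spell out the homogeneity/isobaricity bookkeeping (that the $i$th coefficient of $f^{(\nn-m)}$ is a nonzero constant times $a_i$, preserving degree and weight) which the paper records separately in the displayed formulas following the theorem.
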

(This theorem is equivalent to \cite[Satz 2.18]{Schur}; note that the form
given there requires using the variables $\ha_i$.)

\begin{proof}
  The formula for the order follows from  \eqref{cweight}.
\end{proof}

\begin{remark}\label{Rdiff}
In particular, even if $\Phi$ is an invariant ($\mm=0$), 
$\Phi(f^{(\nn-m)})$ is not; it is only a seminvariant since its order is
$(\nn-m)\nu>0$. 
It follows  that the covariant corresponding to $\Phi(f^{(\nn-m)})$
can not be obtained immediately from the covariant corresponding to $\Phi$.
\end{remark}

Recall that we use subscripts $\dg \nn$ 
(on coefficients or seminvariants)
to denote the degree of the
considered polynomials. We have
\begin{align}\label{af'}
  a\dgxx{i}{\nn-1}(f')=(\nn-i)a_i(f),&&&
\ha\dgxx{i}{\nn-1}(f')=\nn\ha_i(f),
\end{align}
and, more generally, 
\begin{align}
  a\dgxx{i}{m}(f^{(\nn-m)})=(\nn-i)_{n-m}\, a_i(f),&&&
  \ha\dgxx{i}{m}(f^{(\nn-m)})=(\nn)_{n-m}\, \ha_i(f).
\end{align}

\begin{example}\label{EdiffHess}
  Applying the Hessian seminvariant for degree $\nn-1$ to $f'$, we obtain by
  \eqref{hessians}
  \begin{equation}\label{diffhess}
	\begin{split}
H\dgxx{0}{n-1}(f')&=
2(\nn-1)(\nn-2)\nn a_0(\nn-2)a_2-(\nn-2)^2((\nn-1)a_1)^2	
\\&
=(\nn-2)^2\,H_0(f),	  
	\end{split}
  \end{equation}
so, apart from a constant factor, we obtain the Hessian covariant for degree
$\nn$.   
\end{example}

\begin{example}\label{EdiffJac}
Similarly, for the Jacobian joint seminvariant in \refE{EJacobians},
\begin{equation}
J\dgx{n_1-1,n_2}(f',g)=(\nn_1-1)J(f,g).  
\end{equation}
\end{example}

\begin{example}
  For the $k$th Gundelfinger seminvariant we obtain by \eqref{gunds} and
  \eqref{af'}, 
generalizing \eqref{diffhess},
\begin{equation}
  \begin{split}
  g\dgxx{k}{n-1}(f')
&
=\bigpar{(n-1)_{2k}}^{k+1}\bigabs{\ha\dgxx{i+j}{n-1}(f')}_{i,j=0}^k
\\&
=\bigpar{(n-1)_{2k}}^{k+1}\bigabs{n\ha_{i+j}(f)}_{i,j=0}^k
=\bigpar{n(n-1)_{2k}}^{k+1}\bigabs{\ha_{i+j}(f)}_{i,j=0}^k
\\&
=(n-2k)^{k+1}g_k(f).
\raisetag\baselineskip
  \end{split}
\end{equation}

\end{example}

\begin{example}
  The $k$th transvectant seminvariant $\tau_k(f,g)$ is by \eqref{etranss} 
obtained by applying the apolar invariant to suitable derivatives:
  \begin{equation}
	\tau\dgxx{k}{n_1,n_2}(f,g)
= \frac{k!}{(\nn_1-k)!\,(\nn_2-k)!}
A\dgx{k}
\bigpar{f^{(\nn_1-k)},\,g^{(\nn_2-k)}}.
  \end{equation}
As a consequence,
  \begin{equation}
	\tau\dgxx{k}{n_1-1,n_2}(f',g)
=(n_1-k)
	\tau\dgxx{k}{n_1,n_2}(f,g)
  \end{equation}
and, more generally,
  \begin{equation}
	\tau\dgxx{k}{n_1-\ell_1,n_2-\ell_2}(f^{(\ell_1)},g^{(\ell_2)})
=(n_1-k)_{\ell_1}(n_2-k)_{\ell_2}
	\tau\dgxx{k}{n_1,n_2}(f,g),
  \end{equation}
so, apart from a constant factor, we obtain the $k$th transvectant
seminvariant of the original fumctions.
Note the special cases in \refE{EdiffHess} ($k=2$, $\ell_1=\ell_2=1$)
and \refE{EdiffJac} ($k=1$, $\ell_1=1$, $\ell_2=0$).
\end{example}

\subsection{Restriction to lower degree}\label{SSrestr}
Let $\Phi$ be a seminvariant of polynomials of degree (at most) $\nn$.
If $m<\nn$, then 
$\cP_m\subset\cP_n$, so
every polynomial of degree $m$ can
be regarded as a polynomial $\sumin a_i x^{\nn-i}$ with
$a_0=\dots=a_{\nn-m-1}=0$; 
thus, $\Phi(f)$ is defined for every such polynomial. (See \refR{Rdegree}.)

Note that we write a polynomial of
degree $m<\nn$ as
$\sum_{j=0}^ma\dgxx{j}{m}x^{m-j}=\sumin a\dgxx{i}{\nn} x^{\nn-i}$, and thus
  \begin{equation}
	\label{akl}
a\dgxx{i}{\nn}=
\begin{cases}
  a\dgxx{i-(\nn-m)}{m}, & \text{if } i\ge \nn-m\\
0. & \text{if } i<\nn-m.
\end{cases}
  \end{equation}

We denote the restriction of a seminvariant $\Phi$ to polynomials of degree
$m$ by $\Phi\restr{m}$.

\begin{theorem}
  \label{Trestr}
A seminvariant $\Phi$ of polynomials of degree $\nn$ is also a seminvariant of
polynomials of any given lower degree $\nn-j$. If $\Phi$ has degree $\nu$,
weight $w$ and order $\mm$, then its restriction $\Phi\restr {\nn-j}$ has degree
$\nu$, weight $w-jn$ and order $\mm+jn$. 
\end{theorem}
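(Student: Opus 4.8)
The plan is to verify directly that $\Phi\restr{\nn-j}$ obeys the defining relation \eqref{sinv} of a seminvariant, using as input only the same relation for $\Phi$ and the fact that $\Phi$ is homogeneous (of degree $\nu$) in the coefficients. The one elementary observation behind this is that the linear subspace $\cP_{\nn-j}\subset\cP_\nn$ is stable under the action of every lower-triangular $T=\smatrixx{\ga&0\\\gam&\gd}$, and that on this subspace the degree-$\nn$ action and the degree-$(\nn-j)$ action of such a $T$ differ only by a scalar. Indeed, by \eqref{actpol} a lower-triangular $T$ acts on a polynomial $f$ by $Tf(x)=\gd^{d}f\bigpar{(\ga x+\gam)/\gd}$, where $d$ is the degree in which $f$ is regarded; since $f\mapsto f\bigpar{(\ga x+\gam)/\gd}$ never raises the degree, $\cP_{\nn-j}$ is indeed $T$-stable, and for $h\in\cP_{\nn-j}$ one has $T_{(\nn)}h=\gd^{j}\,T_{(\nn-j)}h$ as polynomials. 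Finally, by \eqref{akl} the restriction is simply $\Phi$ viewed on this subspace: $\Phi\restr{\nn-j}(h)=\Phi(h)$ for $h\in\cP_{\nn-j}$, the right-hand side meaning $\Phi$ evaluated at $h$ regarded as an element of $\cP_\nn$ (equivalently, $\Phi$ with its top $j$ coefficient variables set to $0$ and the rest relabelled; in binary-form terms, $\tf\mapsto y^j\tf$).

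Granting this, the proof is essentially one line: for $h\in\cP_{\nn-j}$ and lower-triangular $T$, moving the scalar $\gd^{j}$ out by homogeneity, then using $T_{(\nn)}h=\gd^{j}T_{(\nn-j)}h$, then \eqref{sinv} for $\Phi$ together with $|T|=\ga\gd$,
\[
\gd^{j\nu}\,\Phi\restr{\nn-j}\bigpar{T_{(\nn-j)}h}
=\Phi\bigpar{\gd^{j}T_{(\nn-j)}h}
=\Phi\bigpar{T_{(\nn)}h}
=\ga^{\mm}(\ga\gd)^{w}\,\Phi\restr{\nn-j}(h),
\]
so that $\Phi\restr{\nn-j}(T_{(\nn-j)}h)=\ga^{\mm+j\nu}(\ga\gd)^{w-j\nu}\,\Phi\restr{\nn-j}(h)=\ga^{\mm+j\nu}|T|^{w-j\nu}\,\Phi\restr{\nn-j}(h)$. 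This is precisely \eqref{sinv} for polynomials of degree $\nn-j$, with order $\mm+j\nu$ and weight $w-j\nu$. Since $\Phi\restr{\nn-j}$ is still homogeneous of degree $\nu$ and the order $\mm+j\nu$ is $\ge0$, it only remains to note that $w-j\nu\ge0$ (as a weight must be): if $\Phi\restr{\nn-j}\not\equiv0$ then $\Phi$ has a monomial $\prod_i a_i^{k_i}$ with $k_i=0$ for $i<j$, and isobaricity of $\Phi$ gives $w=\sum_i ik_i\ge j\sum_i k_i=j\nu$. Hence $\Phi\restr{\nn-j}$ is a seminvariant of polynomials of degree $\nn-j$ with the asserted degree, weight, and order (consistently with \eqref{cweight} for degree $\nn-j$), which is the theorem.

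I do not foresee a genuine obstacle here. The only point requiring care is the weight/order bookkeeping in the displayed step: the scalar $\gd^{j}$ relating the two actions is exactly what lowers the weight by $j\nu$ and --- equivalently, via \eqref{cweight} --- raises the order by $j\nu$, and it is easy to misattribute the shift. An alternative, purely polynomial-side route is to appeal to \refT{Tsemipol}: $\Phi\restr{\nn-j}$ is homogeneous, it is invariant under all translations $x\mapsto x+x_0$ because translations map $\cP_{\nn-j}$ into itself and $\Phi$ is translation-invariant on $\cP_\nn$, and it is isobaric of weight $w-j\nu$ by the index shift in \eqref{akl}; the order then follows from \eqref{cweight} for degree $\nn-j$.
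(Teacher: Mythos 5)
Your proof is correct, and your primary route is genuinely different from the paper's. The paper's proof is a two-line appeal to \refT{Tsemipol}: the restriction is still homogeneous and translation invariant, it is isobaric because by \eqref{akl} each surviving variable has its weight lowered by $j$ (hence the total weight drops by $j\nu$), and the order then falls out of \eqref{cweight}. You instead verify the defining relation \eqref{sinv} directly on the lower-triangular subgroup, and the key observation — that $\cP_{\nn-j}$ is stable under these $T$ and that the degree-$\nn$ and degree-$(\nn-j)$ actions differ by the scalar $\gd^{j}$, which homogeneity converts into the factor $\gd^{j\nu}$ responsible for the entire weight/order shift — is a clean conceptual explanation of \emph{why} the shift is $j\nu$, rather than a bookkeeping consequence of \eqref{akl}. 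Your side-check that $w-j\nu\ge0$ via isobaricity is a detail the paper omits, and your closing ``alternative route'' is essentially verbatim the paper's actual proof. One remark: both your argument and the paper's own proof (and the subsequent example with $H_0\restr{\nn-1}$) give weight $w-j\nu$ and order $\mm+j\nu$; the ``$jn$'' in the theorem statement as printed is evidently a typo for $j\nu$, so do not be troubled that your conclusion disagrees with the literal wording.
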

\begin{proof}
  It is an immediate consequence of \refT{Tsemipol} that $\Phi\restr{\nn-j}$ is a
  seminvariant. The degree is the same, but the weight of each $a_i$ is
  decreased by $j$ by \eqref{akl}, and thus the new weight is $w-j\nu$. The
  new order is by \eqref{cweight} given by
  \begin{equation*}
(\nn-j)\nu-2(w-j\nu)=	\nn-2w+j\nu=m+j\nu.
\qedhere
  \end{equation*}
\end{proof}
In particular, $\Phi\restr{\nn-j}$ has order $\mm+j\nu\ge\nu>0$ for any
seminvariant $\Phi$ and any $j>0$; hence, a non-trivial restriction is
never an invariant, even if $\Phi$ is an invariant.

\begin{example}
  The restriction $H_0\restr{\nn-1}$ of the Hessian seminvariant in
  \refE{EHessianp} is, recalling $a_0=0$,
  \begin{equation}
-(\nn-1)^2a\dgxx{1}{\nn}^2	
=
-(\nn-1)^2a\dgxx{0}{\nn-1}^2.	
  \end{equation}
This has degree 2, weight 0 and order $2\nn-2$, in agreement with \refT{Trestr}.
\end{example}

\begin{example}
Combining \refT{Trestr} and \refT{Tdiff}, we see that 
  if $\Phi$ is a seminvariant of 
polynomials of degree $\nn$, 
then so is $\Phi(f')=\Phi\restr{\nn-1}(f')$,
and more generally
$\Phi(f^{(j)})$ for every $j\ge1$. 
If $\Phi$ has
degree $\nu$, weight $w$ and order   $\mm=\nn\nu-2w$, 
then $\Phi(f')$ is a seminvariant
with degree $\nu$, weight $w-\nu$ and order $\mm+2\nu$.
\end{example}

\subsection{Reduced form}
The \emph{reduced form} of a polynomial $f(x)=\sumin a_ix^{\nn-i}$ of degree $\nn$
is the polynomial 
\begin{equation}\label{red}
\red f(x)=
\sumin \ar_i x^{\nn-i}
\=
f\Bigpar{x-\frac{a_1}{\nn a_0}};  
\end{equation}
note that $\ar_0\=a_0$ and $\ar_1=0$.
The reduced form is thus the unique translation $f(x-x_0)$ of $f$ with
vanishing coefficient for the second highest degree $x^{\nn-1}$.
Explicitly, by \eqref{red} and binomial expansions,
\begin{equation}
  \ar_i=\sum_{j=0}^ia_j\binom{\nn-j}{\nn-i}\Bigpar{-\frac{a_1}{\nn\,a_0}}^{i-j}.
\end{equation}
The coefficient $\ar_i$ is $a_0$ times a polynomial of degree $i$ in
$(a_j/a_0)_{j=1}^\nn$, and thus $a_0^{i-1}\ar_i$ is a polynomial in
$a_0,\dots,a_{\nn}$. This polynomial is homogeneous of degree $i$ and, as is
easily checked, isobaric with weight $i$. Furthermore, the reduced form is
the same for all translations $f(x-x_0)$, so its coefficients are
translation invariant.

\begin{theorem}\label{Tred}
  The coefficients $\ar_i$ of the reduced form of $f$ are 
rational seminvariants; more precisely $\ar_i$ is a
seminvariant
divided by  $a_0^{i-1}$.
The seminvariant
$a_0^{i-1}\ar_i$ has degree and weight $\nu=w=i$ and thus order
$\mm=(\nn-2)i$. 
\end{theorem}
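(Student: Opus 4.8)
The plan is to apply \refT{Tsemipol} to the polynomial $\Phi_i\=a_0^{i-1}\ar_i$: one must check that $\Phi_i$ is homogeneous, isobaric, and invariant under every translation $x\mapsto x+x_0$. The first two properties are exactly the ones recorded in the discussion preceding the statement: $\ar_i$ equals $a_0$ times a polynomial of degree $i$ in $(a_j/a_0)_{j\ge1}$, so $\Phi_i=a_0^{i-1}\ar_i$ is a genuine polynomial in $a_0,\dots,a_{\nn}$, homogeneous of degree $i$ and isobaric of weight $i$. (This can also be read off the binomial expansion of $\ar_i$ displayed above after multiplying by $a_0^{i-1}$: every surviving term is a constant times $a_0^{j-1}a_j\,a_1^{i-j}$ for some $0\le j\le i$, of degree $(j-1)+1+(i-j)=i$ and weight $j+(i-j)=i$, the apparent $a_0^{-1}$ being cancelled by the factor $a_j=a_0$ when $j=0$.)

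For the translation invariance I would argue that the reduced form is itself translation invariant, as already observed before the theorem. Indeed $\red f$ is characterized as the unique translate $f(x-x_0)$ of $f$ whose coefficient of $x^{\nn-1}$ vanishes; since $f(x+x_0)$ is again a translate of $f$, it has the same family of translates, hence the same reduced form, so $\ar_i\bigpar{f(x+x_0)}=\ar_i(f)$ for every $i$. As a translation leaves the leading coefficient $a_0$ unchanged, we get $\Phi_i\bigpar{f(x+x_0)}=a_0^{i-1}\ar_i\bigpar{f(x+x_0)}=\Phi_i(f)$, and \refT{Tsemipol} shows that $\Phi_i$ is a seminvariant. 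Consequently $\ar_i=\Phi_i/a_0^{i-1}$ is a rational seminvariant, being the quotient of the seminvariant $\Phi_i$ by a power of the seminvariant $a_0$ (which has degree $1$ and weight $0$ by \refE{Efs}); this is precisely the kind of rational seminvariant considered in \refSS{SSrational}.

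Finally the numerical data of $\Phi_i$ are immediate: the degree $\nu=i$ and weight $w=i$ are just the homogeneity degree and the isobaric weight already established, and the order is then forced by \eqref{cweight}, namely $\mm=\nn\nu-2w=\nn i-2i=(\nn-2)i$. I do not expect any genuine obstacle in this argument; the only things requiring care are keeping the bookkeeping straight between the rational object $\ar_i$ and the polynomial $\Phi_i=a_0^{i-1}\ar_i$, and making explicit the essentially trivial observation that ``the unique translate with vanishing subleading coefficient'' is manifestly translation invariant, which is really the whole content of the theorem.
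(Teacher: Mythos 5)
Your proof is correct and follows essentially the same route as the paper, which states the theorem without a formal proof precisely because the preceding discussion already establishes that $a_0^{i-1}\ar_i$ is a homogeneous, isobaric polynomial of degree and weight $i$ and that the reduced form (hence each $\ar_i$) is translation invariant, after which \refT{Tsemipol} and \eqref{cweight} give the conclusion. Your explicit verification of the degree and weight from the binomial expansion, and the observation that the characterization ``unique translate with vanishing subleading coefficient'' makes translation invariance manifest, are exactly the intended argument.
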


\begin{example}\label{Ered0}
The constant term  
\begin{equation}
  \ar_{\nn}=\red f(0)=f\Bigpar{-\frac{a_1}{\nn\,a_0}}
=\sum_{j=0}^\nn a_j \Bigpar{-\frac{a_1}{\nn\,a_0}}^{\nn-j}
\end{equation}
is a real seminvariant  and $a_0^{\nn-1}\ar_{\nn}=a_0^{\nn-1}f(-a_1/\nn a_0)$ is a
seminvariant of degree and weight $\nn$ and order $\nn(\nn-2)$.

Note that every coefficient $\ar_i$ can be obtained as the constant term of
a derivative ${\red f}^{(\nn-i)}=\red {f^{(\nn-i)}}$, \cf{} \refT{Tdiff}.
\end{example}

\begin{example}
  \label{Ered1}
The first non-trivial reduced coefficient is
\begin{equation}
  \ar_2=a_0\binom \nn2\parfrac{a_1}{\nn\,a_0}^2-a_1(\nn-1)\frac{a_1}{\nn\,a_0}+a_2
=a_2-\frac{(\nn-1)\,a_1^2}{2\nn\,a_0}
=\frac{H_0}{2\nn(\nn-1)\,a_0},
\end{equation}
see \eqref{hessians}. The seminvariant $a_0\ar_2$ is thus a constant times
the Hessian seminvariant $H_0$.
\end{example}

Every homogeneous and isobaric polynomial in $\ar_2,\dots,\ar_{\nn}$ times a
power $a_0^s$
is a rational seminvariant, and a seminvariant if the exponent $s$ is large
enough. Conversely, every seminvariant $\Phi$ is translation invariant, and
thus $\Phi(f)=\Phi(\fr)$; hence every seminvariant is a polynomial in $a_0$ and
$\ar_2,\dots,\ar_{\nn}$. Up  to powers of $a_0$, every
seminvariant is thus a polynomial in the seminvariants $a_0^{i-1}\ar_i$.
However, these seminvariants do not form a basis (when $\nn\ge3$), since we 
may need need negative powers of $a_0$ in the representation.
For example, for $\nn=3$, by \eqref{p3}--\eqref{gdpq},
\begin{equation}
  \gD=-4a_0\,\ar_2^3-27 a_0^2\,\ar_3^2
=\frac{-4(a_0\ar_2^2)^3-27(a_0^2\ar_3)^2}{a_0^2}.
\end{equation}
In general, we have the following theorem.
\begin{theorem}\label{Tredall}
  If $\Phi$ is a seminvariant with degree $\nu$ and weight $w$, then 
  \begin{equation}
	\label{tredall}
\Phi=a_0^{\nu-w}G\bigpar{(a_0^{i-1}\ar_i)_{i=2}^\nn} 
  \end{equation}
for some isobaric polynomial\/ $G$ of weight $w$.
Consequently, $\Phi$ is a polynomial in $a_0$ and
$a_0\ar_2,\dots,a_0^{\nn-1}\ar_{\nn}$ if and only if $\nu\ge w$.

If\/ $\nu\ge w$, then \eqref{tredall} gives a one-to-one correspondence
between seminvariants with degree $\nu$ and weight $w$, and isobaric
polynomials  
$G\bigpar{(a_0^{i-1}\ar_i)_{i=2}^\nn} $
of weight $w$.
\end{theorem}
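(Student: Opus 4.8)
The plan is to use translation invariance to rewrite $\Phi$ as a polynomial in the reduced coefficients $\ar_i$, transport the gradings by degree and by weight across that substitution, and then read off the exact power of $a_0$ that appears once the denominators are cleared. Since a seminvariant is translation invariant (as noted just before the theorem, or by \refT{Tsemipol}), $\Phi(f)=\Phi(\fr)$, and since $\fr$ has coefficient vector $(a_0,0,\ar_2,\dots,\ar_\nn)$, this exhibits $\Phi$ as $P(a_0,\ar_2,\dots,\ar_\nn)$ for a polynomial $P$ in indeterminates $X_0,X_2,\dots,X_\nn$. I would then note that $a_0,\ar_2,\dots,\ar_\nn$ are algebraically independent over the ground field: the explicit formula for $\ar_i$ gives $\ar_i=a_i+R_i$ with $R_i$ a rational function of $a_0,\dots,a_{i-1}$ only, so by induction the ground field with $a_0,a_1,\ar_2,\dots,\ar_\nn$ adjoined equals the ground field with $a_0,\dots,a_\nn$ adjoined, which has transcendence degree $\nn+1$; hence these $\nn+1$ generators, and in particular the $\nn$ elements $a_0,\ar_2,\dots,\ar_\nn$, are algebraically independent. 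Consequently the substitution $X_0\mapsto a_0$, $X_i\mapsto\ar_i$ is injective, so $P$ is uniquely determined.

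Write $b_i\=a_0^{i-1}\ar_i$; recall (from the discussion preceding \refT{Tred}, and restated in \refT{Tred}) that $b_i$ is a polynomial, homogeneous of degree $i$ and, with $a_j$ given weight $j$, isobaric of weight $i$. Thus $\ar_i=b_i/a_0^{i-1}$ is homogeneous of degree $1$ and isobaric of weight $i$, while $a_0$ is homogeneous of degree $1$ and isobaric of weight $0$. Grading the polynomial ring in $X_0,X_2,\dots,X_\nn$ by total degree, and separately by $\operatorname{wt}X_0=0$, $\operatorname{wt}X_i=i$, the substitution $X_0\mapsto a_0$, $X_i\mapsto\ar_i$ respects both gradings and is injective, so the fact that $\Phi$ is homogeneous of degree $\nu$ and isobaric of weight $w$ forces $P$ to be homogeneous of degree $\nu$ and isobaric of weight $w$ for those gradings. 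Substituting $\ar_i=b_ia_0^{-(i-1)}$ into a monomial $X_0^{k_0}\prod_{i\ge2}X_i^{k_i}$ of $P$, the power of $a_0$ that appears is $k_0-\sum_{i\ge2}(i-1)k_i=\bigl(k_0+\sum_{i\ge2}k_i\bigr)-\sum_{i\ge2}ik_i=\nu-w$, the same for every monomial. This yields $\Phi=a_0^{\nu-w}G\bigpar{(b_i)_{i=2}^\nn}$, where $G$ collects the remaining $b$-factors; since every surviving monomial satisfies $\sum_{i\ge2}ik_i=w$, the polynomial $G$ is isobaric of weight $w$ in the grading $\operatorname{wt}X_i=i$. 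This is \eqref{tredall}.

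For the remaining claims: if $\nu\ge w$ then $a_0^{\nu-w}$ is a genuine polynomial, so \eqref{tredall} displays $\Phi$ as a polynomial in $a_0,b_2,\dots,b_\nn$. Conversely, if $\Phi\not\equiv0$ is such a polynomial, expand it in the monomials $a_0^{j_0}\prod_{i\ge2}b_i^{j_i}$; the $b_i$ are algebraically independent (being obtained from the independent $\ar_i$ by multiplying by powers of $a_0$), so these monomials are linearly independent over the ground field, and each is homogeneous of degree $j_0+\sum_{i\ge2}ij_i$ and isobaric of weight $\sum_{i\ge2}ij_i$; hence every occurring monomial must satisfy $j_0+\sum ij_i=\nu$ and $\sum ij_i=w$, so $j_0=\nu-w\ge0$, \ie{} $\nu\ge w$. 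Finally, when $\nu\ge w$ the assignment $G\mapsto a_0^{\nu-w}G\bigpar{(b_i)_{i=2}^\nn}$ sends isobaric polynomials of weight $w$ to seminvariants of degree $\nu$ and weight $w$: $a_0$ (\refE{Efs}) and the $b_i$ (\refT{Tred}) are seminvariants, products of seminvariants are seminvariants, and $a_0^{\nu-w}G\bigpar{(b_i)_{i=2}^\nn}$ is homogeneous of degree $\nu$ (each $b_i$ has degree $i$, so $G\bigpar{(b_i)_{i=2}^\nn}$, being isobaric of weight $w$, is homogeneous of degree $w$) and isobaric of weight $w$, hence a seminvariant by the principle that an isobaric polynomial in seminvariants which is also homogeneous in the coefficients is again a seminvariant; it is injective by the algebraic independence of the $b_i$ after cancelling $a_0^{\nu-w}$, and surjective by \eqref{tredall}. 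The only real work is this bookkeeping with the two gradings; there is no single hard estimate, but the equivalence between $\nu\ge w$ and polynomiality in $a_0,b_2,\dots,b_\nn$ genuinely rests on the algebraic independence of $a_0,b_2,\dots,b_\nn$.
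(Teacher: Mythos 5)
Your proof is correct and follows essentially the same route as the paper's: translation invariance gives $\Phi(f)=\Phi(\fr)$, hence a representation as a polynomial in $a_0,\ar_2,\dots,\ar_\nn$, and then the degree and weight gradings force the exponent of $a_0$ to be $\nu-w$. You supply details the paper leaves implicit (notably the algebraic independence of $a_0,\ar_2,\dots,\ar_\nn$, which underpins the uniqueness, the ``only if'' direction, and the one-to-one correspondence), but the underlying argument is the same.
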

\begin{proof}
Each term  $a_0^{i-1}\ar_i$ has the same degree and weight, and thus so has
every (isobaric)
polynomial $G\bigpar{(a_0^{i-1}\ar_i)_{i=2}^\nn}$ in them, while $a_0$ has
degree 1 and weight $0$. 
Hence an isobaric term 
$a_0^s G\bigpar{(a_0^{i-1}\ar_i)_{i=2}^\nn}$ has weight $v$ and degree $s+v$
for some $w$, and thus we must have $v=w$ and $s=\nu-v=\nu-w$.
\end{proof}

By \refE{DD} below, the discriminant $\gD$ has $\nu=2(\nn-1)$ and
$w=\nn(\nn-1)$, so 
$\nu-w=-(\nn-2)(\nn-1)<0$ for any $\nn\ge3$, and then $\gD$ is not a
polynomial in  $a_0$ and $a_0^{i-1}\ar_i$.

\begin{remark}
\label{Rredall}  
In the case $\nu\ge w$, we see again
that the dimension $N(n,\nu,w)$ is independent of $\nu$ as long as $\nu\ge w$.
Moreover, $N(n,\nu,w)$ then equals the number of isobaric monomials 
of weight $w$ in $(\ar_i)_{i=2}^n$; this number has the generating function
$\prod_{i=2}^n(1-q^i)\qw$,  which yields another proof of \refC{CGauss}.
\end{remark}

\section{Invariants  and roots}\label{Sroots}

Let the polynomial $f$ of degree $\nn$ have roots $\xin$
(possibly in some extension of the ground field). Then, as is well-known,
\begin{equation}\label{sofie}
  a_i=(-1)^i a_0 e_i(\xin),
\end{equation}
where $e_i$ is the $i$:th symmetric polynomial; note that $e_i$ has degree
$i$.
If $\Phi(f)$ is a seminvariant, we can thus write $\Phi(f)$ as a
polynomial $\Phix(\xin;a_0)$.
\begin{theorem}\label{Tinvroots}
  A polynomial $\Phix(\xin;a_0)$ is a seminvariant of degree $\nu$ and
  weight $w$ if and only if
$\Phix(\xin;a_0)=a_0^{\nu}\gf(\xin)$ where
  \begin{romenumerate}
  \item 
$\gf$ is symmetric in $\xin$;
  \item 
$\gf$ is homogeneous of degree $w$ in $\xin$;
  \item
$\gf$ is translation invariant, \ie,
	$\gf(\xi_1-x_0,\dots,\xi_\nn-x_0)=\gf(\xin)$. Equivalently, $\gf(\xin)$
	is a polynomial in the differences $\xi_j-\xi_{\nn}$.
  \item 
$\nu\ge \deg_{\xi_1}\bigpar{\gf(\xin)}$, the degree of $\xi_1$ in $\gf(\xin)$.
  \end{romenumerate}

Furthermore, $\Phix$ is an 
invariant if and only 
the above holds and  $\nn\nu=2w$; in this case
\begin{equation}\label{xisym}
  (\xi_1\dotsm\xi_{\nn})^{\nu}\gf(\xi_1\qw,\dots,\xi_{\nn}\qw)=(-1)^w\gf(\xin).
\end{equation}
\end{theorem}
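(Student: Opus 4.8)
The plan is to translate everything via the root map \eqref{sofie}. Regard $\xi_1,\dots,\xi_\nn$ and $a_0$ as algebraically independent indeterminates and set $a_i\=(-1)^ia_0e_i(\xin)$; since $e_1,\dots,e_\nn$ are algebraically independent, so are $a_0,a_0e_1,\dots,a_0e_\nn$, so the substitution embeds $\bbQ[a_0,\dots,a_\nn]$ into $\bbQ[a_0,\xin]$, and a polynomial identity in the $a_i$ is the same as the corresponding identity in $a_0$ and the $\xi_j$. If $\Phi$ is homogeneous of degree $\nu$, each monomial $a_0^{k_0}\dotsm a_\nn^{k_\nn}$ (with $k_0+\dots+k_\nn=\nu$) becomes $\pm a_0^\nu e_1^{k_1}\dotsm e_\nn^{k_\nn}$, so $\Phix=a_0^\nu\gf$ where $\gf=\Phi(1,-e_1,e_2,\dots,(-1)^\nn e_\nn)$ is a polynomial in $e_1,\dots,e_\nn$, hence symmetric; this gives (i) automatically and reduces the problem to understanding which symmetric $\gf$ arise.

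The one genuinely nontrivial point — and the step I expect to be the main obstacle — is the following lemma: if $\gf$ is symmetric and $\gf=\psi(e_1,\dots,e_\nn)$ is its unique expression as a polynomial in the elementary symmetric functions, then $\deg_{\xi_1}\gf$ equals the total degree $\deg\psi$. The inequality $\deg_{\xi_1}\gf\le\deg\psi$ is clear since each $e_i$, $1\le i\le\nn$, has $\xi_1$-degree $1$. For the reverse I would write $e_i(\xin)=\xi_1e_{i-1}(\hx)+e_i(\hx)$ with $\hx=(\xi_2,\dots,\xi_\nn)$, so that the coefficient of $\xi_1^D$ (for $D=\deg\psi$) in $\gf=\sum_k c_k e_1^{k_1}\dotsm e_\nn^{k_\nn}$ equals $\sum_{k_1+\dots+k_\nn=D}c_k\,e_1(\hx)^{k_2}e_2(\hx)^{k_3}\dotsm e_{\nn-1}(\hx)^{k_\nn}$; distinct exponent vectors $k$ of size $D$ give distinct vectors $(k_2,\dots,k_\nn)$, hence distinct monomials in the algebraically independent $e_1(\hx),\dots,e_{\nn-1}(\hx)$, so no cancellation occurs and this coefficient is nonzero. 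Granting the lemma, $\Phix=a_0^\nu\gf$ is a polynomial in $a_0,\dots,a_\nn$ (no $a_0$ in the denominator after substituting $e_i=(-1)^ia_i/a_0$) exactly when $\deg\psi=\deg_{\xi_1}\gf\le\nu$, i.e.\ exactly when (iv) holds; and in that case $\Phi$ is automatically homogeneous of degree $\nu$.

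Next, since $a_i$ has weight $i$ and $e_i$ has degree $i$, a monomial of weight $w$ maps to $\pm a_0^\nu$ times a polynomial homogeneous of degree $w$ in $\xin$; hence $\Phi$ is isobaric of weight $w$ precisely when $\gf$ is homogeneous of degree $w$, which is (ii). The translation $x\mapsto x+x_0$ keeps the leading coefficient $a_0$ and sends the roots $\xi_j$ to $\xi_j-x_0$, so $\Phi(f(x+x_0))=a_0^\nu\gf(\xi_1-x_0,\dots,\xi_\nn-x_0)$, and invariance under all translations is exactly $\gf(\xi_1-x_0,\dots,\xi_\nn-x_0)=\gf(\xin)$, which is (iii) (the reformulation as a polynomial in the differences $\xi_j-\xi_\nn$ follows by putting $x_0=\xi_\nn$, and the converse is obvious). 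Combining these three facts with \refT{Tsemipol} — a polynomial is a seminvariant precisely when it is homogeneous, isobaric, and invariant under all translations — proves the seminvariant equivalence.

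For the last part, \refT{Tinvpol} says a seminvariant is an invariant precisely when $\nn\nu=2w$, which is the stated criterion. To obtain \eqref{xisym}, note that $f\refl(x)=x^\nn f(1/x)=a_0\prod_{i=1}^\nn(1-\xi_ix)$ has leading coefficient $(-1)^\nn a_0e_\nn=(-1)^\nn a_0\xi_1\dotsm\xi_\nn$ and roots $\xi_1\qw,\dots,\xi_\nn\qw$, so $\Phi(f\refl)=\bigpar{(-1)^\nn a_0\xi_1\dotsm\xi_\nn}^\nu\gf(\xi_1\qw,\dots,\xi_\nn\qw)$. On the other hand the binary form attached to $f\refl$ is $\rho\tf$ with $\rho=\smatrixx{0&1\\1&0}$, so by \eqref{inv} $\Phi(f\refl)=|\rho|^w\Phi(f)=(-1)^w a_0^\nu\gf(\xin)$. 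Equating, cancelling $a_0^\nu$, and using $(-1)^{\nn\nu}=(-1)^{2w}=1$ gives $(\xi_1\dotsm\xi_\nn)^\nu\gf(\xi_1\qw,\dots,\xi_\nn\qw)=(-1)^w\gf(\xin)$, which is \eqref{xisym}. (One checks separately that the left side is a genuine polynomial: each monomial $\xi_1^{b_1}\dotsm\xi_\nn^{b_\nn}$ of $\gf$ has $\sum b_j=w$ and, by (iv), $b_j\le\nu$, so it becomes $\xi_1^{\nu-b_1}\dotsm\xi_\nn^{\nu-b_\nn}$.)
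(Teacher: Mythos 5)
Your proof is correct and follows the same route the paper sketches (translate via \eqref{sofie}, invoke Theorem~\ref{Tsemipol} for homogeneity/isobarity/translation invariance, and identify (iv) as the polynomiality condition); the paper explicitly omits the details, and your lemma that $\deg_{\xi_1}\gf$ equals the total degree of $\gf$ as a polynomial in $e_1,\dots,e_\nn$ is precisely the point it glosses over, proved correctly via the expansion $e_i(\xin)=\xi_1e_{i-1}(\hx)+e_i(\hx)$ and the algebraic independence of $e_1(\hx),\dots,e_{\nn-1}(\hx)$. The derivation of \eqref{xisym} from $\Phi(f\refl)=(-1)^w\Phi(f)$ also matches the paper's intended argument (cf.\ the computation in the proof of Theorem~\ref{Tcovroots}).
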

\begin{proof}
Recall that every symmetric polynomial is a polynomial in $e_1,\dots,e_{\nn}$.
Then use \refT{Tsemipol} and \eqref{sofie} and note that each $\xi_j$ has
  weight 1  by the fact that $a_i$ has weight $i$. 
This might yield terms containing negative powers of $a_0$, and
(iv) is necessary and sufficient for $\Phi$ to be a polynomial in
$a_0,\dots,a_{\nn}$. 
The symmetry $\Phi(\ax\refl)=(-1)^w\Phi(\ax)$ translates to \eqref{xisym}.
We omit the details.
\end{proof}


\begin{example}
  \label{DD}
The \emph{discriminant} of $f$ is
\begin{equation}\label{dd}
\gD(f)\=a_0^{2\nn-2} \gD_0(f)
=a_0^{2\nn-2}\prod_{1\le i< j\le \nn} (\xi_i-\xi_j)^2,
\end{equation}

This is symmetric and has degree $w=\nn(\nn-1)$ in $\xin$. 
It follows from \refT{Tinvroots}, 
since $\nu=2(\nn-1)$, that the discriminant $\gD$ is an 
invariant of
degree $\nu=2(\nn-1)$ and weight $w=\nn(\nn-1)$.
(Other notation: $\gD=D\scite{Schur}$.)
\end{example}

\begin{example}
  \label{EHessxi}
The sum $\sum_{1\le i<j\le \nn}(\xi_i-\xi_j)^2 $ satisfies (i)--(iii) in
\refT{Tinvroots},  and has degree 2 in $\xi_1$, so 
$a_0^2\sum_{1\le i<j\le \nn}(\xi_i-\xi_j)^2 $ is a seminvariant.
We have, using \eqref{sofie},
\begin{multline*}
\sum_{1\le i<j\le \nn}(\xi_i-\xi_j)^2  
=(\nn-1)\sumiin \xi_i^2-2\sum_{1\le i<j\le \nn}\xi_i\xi_j
\\
=(\nn-1)\Bigpar{\sumiin \xi_i}^2-2\nn\sum_{1\le i<j\le \nn}\xi_i\xi_j
=(\nn-1)\Bigparfrac{-a_1}{a_0}^2-2\nn\frac{a_2}{a_0},  
\end{multline*}
so
\begin{equation}\label{Hroots}
a_0^2\sum_{1\le i<j\le \nn}(\xi_i-\xi_j)^2  
=(\nn-1)a_1^2-2\nn a_0a_2
=-\frac{1}{\nn-1} H_0(f),  
\end{equation}
where $H_0$ is the Hessian seminvariant in Examples \ref{EHessians} and
\ref{EHessianp}. 
\end{example}

\begin{example}\label{Ereda}
  Let $\bxi\=\frac1n\sumiin\xi_i=-a_1/na_0$.
Then the roots of the reduced polynomial $\red f$ are
$\xi_1-\bxi,\dots,\xi_n-\bxi$. 
Any symmetric homogeneous polynomial in $\xibxi$ satifies
\refT{Tinvroots}(i)--(iii), and multiplied by a suitable power of $a_0$, it
is thus a seminvariant. Since any such polynomial can be written as an isobaric
polynomial in $\ar_i/a_0$, this also follows by \refT{Tred} or
\refT{Tredall}. 

In particular, the elementary symmetric polynomials $e_k$ yield the rational
seminvariants
\begin{equation}\label{ek}
 e_k(\xibxi)=(-1)^k\red a_k/a_0.
\end{equation}
\end{example}

\begin{example}\label{Eredb}
  As another example of the construction in \refE{Ereda}, consider the power
  sum
  \begin{equation}
S_k\=\sumiin(\xi_i-\bxi)^k	
  \end{equation}
and the seminvariant $a_0^kS_k$.
Note that $S_0=n$ is a constant and $S_1=0$. Further,
\begin{equation}
  S_2=\sumiin\xi_i^2-n\bxi^2
=\frac1{2n}\sum_{i,j=1}^n(\xi_i-\xi_j)^2,
\end{equation}
so by \eqref{Hroots},
\begin{equation}
  a_0^2S_2=-\frac{1}{n(n-1)}H_0(f).
\end{equation}

Furthermore, $S_k$ can be expressed in $e_1,\dots,e_k$ by the 
standard generating function identity
\begin{equation}\label{e=S}
  \log\lrpar{\sum_{k=0}^\infty e_k(-t)^k}
=
\sumiin\log(1-t\xi_i)
=-\sum_{k=1}^\infty S_k\frac{t^k}k.
\end{equation}
which leads to the
classical \emph{Newton's identities}
(with $e_0=1$ and $e_k=0$ for $k>n$),
\begin{equation}
 ke_k=\sum_{i=1}^k(-1)^{i-1}e_{k-i}S_i,\qquad k\ge1.
\end{equation}
In our situation, the arguments are $\xibxi$; thus
$S_1=e_1=0$ and we have, for example, 
\begin{align}
  S_2&=e_1^2-2e_2=-2e_2, \\
S_3&=3e_3,\\
S_4&=-4e_4+2e_2^2.
\end{align}
Thus, by \eqref{ek} and \refE{Ered1}, we obtain the seminvariants
\begin{align}
  a_0^2S_2&=-2a_0^2e_2=-2a_0\ar_2=-\frac{1}{n(n-1)}H_0, \label{s2}\\
  a_0^3S_3&=3a_0^3e_3=-3a_0^2\ar_3,\label{s3}\\
  a_0^4S_4&=-4a_0^4e_4+2(a_0^2e_2)^2=-4a_0^3\ar_4+2(a_0\ar_2)^2.\label{s4}
\end{align}
Note that $a_0^kS_k$ has degree and weight $\nu=w=k$ (see \refT{Tinvroots}).
\end{example}

\begin{example}\label{Eredc}
  Consider the random variable $X=\xi_Y$, where $Y\in\set{1,\dots,n}$ is a
  random index (with uniform distribution).
Then $X$ has mean $\E X=\bxi$ and centred moments
\begin{equation}
  \E(X-\E X)^k
=\E(X-\bxi)^k=\tfrac1n S_k.
\end{equation}
Thus, $a_0^k  \E(X-\E X)^k$ equals $n\qw$ times the seminvariant in
\refE{Eredb}. 

\citet[\S7.6]{KR} suggested studying the \emph{cumulants} $\chi_k$ of
$X$, $k\ge2$. These are defined by the generating function
\begin{equation}
  \exp\lrpar{\sum_{k=1}^\infty \chi_k \frac{t^k}{k!}}
=\E e^{tX} =\sumk \E X^k\frac{t^k}{k!},
\end{equation}
and thus, since $\chi_1= EX=\bxi$, 
\begin{equation}\label{chi=s}
  \exp\lrpar{\sum_{k=2}^\infty \chi_k \frac{t^k}{k!}}
=\E e^{t(X-\bxi)} =\sumk \frac{1}{n}S_k\frac{t^k}{k!}
.
\end{equation}
By expanding, we obtain the standard formulas for $\chi_k$ as a polynomial
in $S_1,\dots,S_k$, or, using \eqref{e=S}, in $e_1,\dots,e_k$.
For example,
\begin{align}
  \chi_2&=\E(X-\E X)^2=\frac1n S_2=-\frac2n e_2, \label{chi2}\\
  \chi_3&=\E(X-\E X)^3=\frac1n S_3=\frac3n e_3, \label{chi3}\\
  \chi_4&=\E(X-\E X)^4-3\lrpar{\E(X-\E X)^2}^2
=\frac1n S_4 -\frac3{n^2}S_2^2=-\frac4n e_4+\frac{2n-12}{n^2}e_2^2.
\label{chi4}
\end{align}
It follows from \eqref{chi=s} that $a_0^k\chi_k$ is an isobaric polynomial in
$a_0^jS_j$, $j=2,\dots,k$, and thus a seminvariant, with degree and
weight $\nu=w=k$.
For example, by \eqref{chi2}--\eqref{chi4} and \eqref{s2}--\eqref{s4},
\begin{align}
 a_0^2 \chi_2&=-\frac2n a_0^2e_2 =-\frac2n a_0 \ar_2
=-\frac1{n^2(n-1)}H_0,
\label{achi2}\\
a_0^3 \chi_3&=\frac3n a_0^3e_3=-\frac3n a_0^2\ar_3,
\label{achi3}\\
a_0^4  \chi_4&
=-\frac4n a_0^4e_4+\frac{2n-12}{n^2}\bigpar{a_0^2e_2}^2
=-\frac4n a_0^3\ar_4+\frac{2n-12}{n^2}\bigpar{a_0\ar_2}^2.
\label{achi4}
\end{align}
\end{example}

\subsection{The case $a_0=0$}
We have implicitly assumed $a_0\neq0$ above. If $a_0=0$, then $f$ has degree
at most $\nn-1$, and thus at most $\nn-1$ roots. We then adopt the projective
view and regard $\infty$ as a
root of multiplicity $\nn-\deg(f)$, so that $f$ still has $\nn$ roots (counted
with multiplicity); these correspond (just as in the case $a_0\neq0$)
to the zeros of the corresponding
binary form $\sumin a_i x^{\nn-i}y^i$ of degree $\nn$.

We can apply a limit argument to find 
the expression for a seminvariants in the roots of $f$ in this case too.

\begin{theorem}\label{Tinvroots-1}
Let $\Phi$ be a seminvariant of polynomials of degree $\nn$, and that
$\Phi(f)=a_0^{\nu}\gf(\xin)$  
for some polynomial $\gf$ in the roots $\xi_1,\dots,\xi_{\nn}$ of
$f$.
Then the restriction to polynomials of degree $\nn-1$ is given by
$\Phi\restr{\nn-1}(f)=
a\dgxx{0}{\nn-1}^{\nu}\gf\restr{\nn-1}(\xini)$,
where $\gf\restr{\nn-1}$ is obtained  
from $\gf$ by first replacing each 
monomial $\xi_1^{j_1}\dotsm\xi_{\nn}^{j_{\nn}}$
by $\xi_1^{j_1}\dotsm\xi_{\nn-1}^{j_{\nn-1}}$ if $j_{\nn}=\nu$ and by $0$
otherwise, 
and then multiplying by $(-1)^{\nu}$.
\end{theorem}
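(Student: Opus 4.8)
The plan is to exhibit a polynomial $f$ of degree $\nn-1$ as the value at $\delta=0$ of a one-parameter family $h_\delta$ of polynomials of genuine degree $\nn$ whose ``extra'' root escapes to infinity as $\delta\to0$, and to track the root expression $a_0^\nu\gf$ along the family. Since both sides of the asserted identity are polynomials in the coefficients of $f$ (the right-hand side because $\deg_{\xi_1}(\gf\restr{\nn-1})\le\deg_{\xi_1}\gf\le\nu$), it is enough to treat the case $a\dgxx{0}{\nn-1}\neq0$, where $f=a\dgxx{0}{\nn-1}\prod_{i=1}^{\nn-1}(x-\xi_i)$ splits over a suitable extension of the ground field; and to avoid analysis I would let $\delta$ be a formal indeterminate rather than take a limit.

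First I would set $h_\delta(x)\=(1-\delta x)\,f(x)$, a polynomial of degree $\nn$ with coefficients in $k[\delta]$; its $\nn$ roots are $\xini$ together with $1/\delta$, and its leading coefficient is $-\delta\,a\dgxx{0}{\nn-1}$. Specializing $\delta=0$ returns $f$, now regarded as a polynomial of degree $\nn$ with vanishing leading coefficient (see \refR{Rdegree} and \refSS{SSrestr}). Since $\Phi$ is a polynomial in the coefficients and the coefficients of $h_\delta$ are (linear) polynomials in $\delta$, the expression $\Phi(h_\delta)$ is a polynomial in $\delta$ with
\begin{equation*}
\Phi(h_\delta)\big|_{\delta=0}=\Phi\restr{\nn-1}(f).
\end{equation*}

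Next, because $-\delta\,a\dgxx{0}{\nn-1}\neq0$ in $k(\delta)$ (enlarged to contain the $\xi_i$), \refT{Tinvroots} applied to $h_\delta$ gives $\Phi(h_\delta)=\bigpar{-\delta\,a\dgxx{0}{\nn-1}}^{\nu}\gf\bigpar{\xini,\,1/\delta}$. Writing $\gf=\sum_{\mathbf j}c_{\mathbf j}\,\xi_1^{j_1}\dotsm\xi_\nn^{j_\nn}$, this becomes
\begin{equation*}
\Phi(h_\delta)=(-1)^{\nu}\,a\dgxx{0}{\nn-1}^{\,\nu}\sum_{\mathbf j}c_{\mathbf j}\,\xi_1^{j_1}\dotsm\xi_{\nn-1}^{j_{\nn-1}}\,\delta^{\,\nu-j_\nn};
\end{equation*}
since $\gf$ is symmetric, $\deg_{\xi_\nn}\gf=\deg_{\xi_1}\gf\le\nu$ by \refT{Tinvroots}(iv), so $j_\nn\le\nu$ in every term, confirming that $\Phi(h_\delta)$ is indeed a polynomial in $\delta$. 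Setting $\delta=0$ annihilates every monomial with $j_\nn<\nu$ and leaves
\begin{equation*}
\Phi\restr{\nn-1}(f)=(-1)^{\nu}\,a\dgxx{0}{\nn-1}^{\,\nu}\sum_{\mathbf j:\,j_\nn=\nu}c_{\mathbf j}\,\xi_1^{j_1}\dotsm\xi_{\nn-1}^{j_{\nn-1}}=a\dgxx{0}{\nn-1}^{\,\nu}\,\gf\restr{\nn-1}(\xini),
\end{equation*}
which is precisely the assertion, $\gf\restr{\nn-1}$ being $(-1)^{\nu}$ times the polynomial obtained from $\gf$ by keeping only the monomials with $j_\nn=\nu$ and deleting the factor $\xi_\nn$. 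The general case then follows by polynomiality in the coefficients of $f$.

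The one genuinely delicate point — the main obstacle — is the claim that $\Phi(h_\delta)$ carries no negative powers of $\delta$, that is, that the root running to infinity contributes at bounded order; this is exactly what hypothesis (iv) of \refT{Tinvroots} (through the symmetry of $\gf$) guarantees, and without it the restriction would fail to be polynomial. Everything else is formal manipulation, and \refT{Trestr} already tells us that $\Phi\restr{\nn-1}$ is a seminvariant of weight $w-\nu$, in agreement with $\gf\restr{\nn-1}$ being homogeneous of degree $w-\nu$ in $\xini$, so no independent verification of seminvariance is required.
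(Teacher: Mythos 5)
Your proof is correct and is essentially the paper's own argument: the paper also realizes $f$ as the specialization at a parameter value of a degree-$\nn$ family whose extra root is the reciprocal of the (vanishing) leading coefficient, applies the root formula from \refT{Tinvroots} to that family, and observes that \refT{Tinvroots}(iv) together with symmetry caps $j_\nn$ at $\nu$ so that the substitution is purely formal and only the $j_\nn=\nu$ monomials survive. Your parametrization $h_\delta=(1-\delta x)f(x)$ is just a relabelling of the paper's choice $\xi_\nn=b/a_0$ with $a_0\to0$.
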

Note that $j_{\nn}\le\nu$ for every term by \refT{Tinvroots}(iv) (and symmetry).

In other words, we delete all terms in $\gf$ not containing
a factor $\xi_{\nn}^{\nu}$, and 
replace each factor $\xi_{\nn}^{\nu}$ by $(-1)^{\nu}$.

\begin{proof}
  Fix $\xini$ and let
$\xi_{\nn}=b/a_0$ for some $b$; now let $a_0\to0$.
(This limit can be done in a purely formal way, for any field, and does not
  really assume any kind of continuity. All quantities below are polynomials
  in $a_0$, and we just substitute $a_0=0$ in them.)
Then, for $1\le i\le \nn$,
\begin{equation*}
  \begin{split}
  a_0e_i(\xi_1,\dots,\xi_{\nn})
&
= a_0e_i(\xi_1,\dots,\xi_{\nn-1})
+  a_0\xi_{\nn}e_{i-1}(\xi_1,\dots,\xi_{\nn-1})
\\&
\to b e_{i-1}(\xi_1,\dots,\xi_{\nn-1}).	
  \end{split}
\end{equation*}
Hence, comparing with \eqref{sofie}, we see that the coefficients of the 
polynomial $f$ with roots $\xin$ and leading coefficient $a_0$ tend to the
coefficients of the polynomial $f_1$ of degree $\nn-1$
with roots $\xini$ and leading term $-bx^{\nn-1}$, \ie, leading coefficient
$a\dgxx{0}{\nn-1}(f_1)=-b$. 
Consequently, $\Phi(f)\to\Phi(f_1)$. The result follows by noting that, as
$a_0\to0$, $a_0^{\nu}\xi_{\nn}^{j_{\nn}}\to0$ if $j_{\nn}<\nu$, while
$a_0^{\nu}\xi_{\nn}^{\nu}=b^{\nu}=(-a\dgxx{0}{\nn-1}(f_1))^{\nu}$.
\end{proof}

\begin{example}
  \label{Ediskr-1}
Applying \refT{Tinvroots-1} to the discriminant in \eqref{dd} we find
for a polynomial $f$ of degree $\nn-1$
\begin{equation}\label{dd-1}
\gD\dgx{\nn}(f)
=a_0^{2\nn-2}\prod_{1\le i< j\le \nn-1} (\xi_i-\xi_j)^2
=a_0^2\,\gD\dgx{\nn-1}(f).
\end{equation}
If we repeat, we find that for any $f$ of degree $\nn-2$ (or smaller),
$\gD\dgx{\nn}(f)=0$, in accordance with our view that then $f$ has a double
root $\infty$.
\end{example}

\subsection{Joint invariants}
\refT{Tinvroots} extends to the case of several polynomials
$f_1,\dots,f_\ell$.
Let the polynomial $f_j$ have degree $\nn_j$ and roots $\xijn$
(possibly in some extension of the ground field). Then, by \eqref{sofie},
\begin{equation}\label{sofiej}
  a_i(f_j)=(-1)^i a_0(f_j) e_i(\xijn),
\end{equation}
and if $\Phi(f_1,\dots,f_\ell)$ is a joint seminvariant, we can write
it as a 
polynomial $\Phix(\xijn;a_0(f_1),\dots,a_0(f_\ell))$ in all roots and
leading coefficients.

\begin{theorem}\label{Tinvrootsj}
  A polynomial $\Phix(\xiln;a_0(f_1),\dots,a_0(f_\ell))$ is a joint
  seminvariant of  
$f_1,\dots,f_\ell$ with
degrees $\nu_1,\dots,\nu_\ell$ and
  weight $w$ if and only if
$\Phix(\xiln;a_0)=\prod_{j=1}^\ell a_0(f_j)^{\nu_j}\cdot
\gf(\xiln)$, where
  \begin{romenumerate}
  \item 
$\gf$ is symmetric in each set of roots $\xijn$, $j=1,\dots,\ell$;
  \item 
$\gf$ is homogeneous of degree $w$ in $\xiln$;
  \item
$\gf$ is translation invariant, \ie,
	$\gf(\xi^{(1)}_1-x_0,\dots,\xi^{(\ell)}_\nn-x_0)=
	\gf(\xi^{(1)}_1,\dots,\xi^{(\ell)}_\nn)$.
  \item 
$\nu_j\ge \deg_{\xij_1}\bigpar{\gf(\xiln)}$, the degree of $\xij_1$ in 
$\gf(\xiln)$.
  \end{romenumerate}

Furthermore, $\Phix$ is an
invariant if and only 
the above holds and  $\nn_1\nu_1+\dots+\nn_\ell\nu_\ell=2w$; in this case
\begin{equation}\label{xisymj}
 \prod_{j=1}^{\ell} (\xij_1\dotsm\xij_{\nn})^{\nu_j}\cdot
\gf\bigpar{(\xi^{(1)}_1)\qw,\dots,(\xi^{(\ell)}_{\nn})\qw}=(-1)^w\gf(\xiln).
\end{equation}
\end{theorem}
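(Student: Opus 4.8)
The plan is to imitate the proof of \refT{Tinvroots} with an extra index $j$ carried throughout; the relations \eqref{sofiej}, together with \refT{Tsemipol} and \refT{Tinvpol}, supply everything needed, and the only step that requires genuine care is condition (iv).

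Suppose first that $\Phi(f_1,\dots,f_\ell)$ is a joint seminvariant. It is homogeneous of degree $\nu_j$ in the coefficients of $f_j$ for each $j$, and by \eqref{sofiej} every $a_i(f_j)$ equals $a_0(f_j)$ times the symmetric polynomial $(-1)^i e_i(\xijn)$, of degree $i$ in the roots of $f_j$; hence we may factor out $\prod_j a_0(f_j)^{\nu_j}$ and write $\Phix=\prod_j a_0(f_j)^{\nu_j}\cdot\gf(\xiln)$ with $\gf$ a \emph{polynomial} that is symmetric in each root set $\xijn$, which is~(i). Assigning $a_0(f_j)$ weight $0$ and each $\xij_k$ weight $1$ makes \eqref{sofiej} weight preserving, so the isobaric polynomial $\Phi$ of weight $w$ corresponds to a $\gf$ that is homogeneous of degree $w$, which is~(ii). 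By \refT{Tsemipol}, $\Phi$ is invariant under the simultaneous translation $f_j(x)\mapsto f_j(x+x_0)$, which replaces each $\xij_k$ by $\xij_k-x_0$, so $\gf$ is translation invariant; this is~(iii). For~(iv), invert the substitution: since $e_i(\xijn)=(-1)^i a_i(f_j)/a_0(f_j)$, the symmetric polynomial $\gf$ may be written as a polynomial $R$ in the ratios $a_i(f_j)/a_0(f_j)$, and $\Phi=\prod_j a_0(f_j)^{\nu_j}R$ is a polynomial (rather than a rational function) in the $a_i(f_j)$ exactly when, for every $j$, the degree of $R$ in the $j$th block of ratios does not exceed $\nu_j$; because the elementary symmetric polynomials have degree one in each variable, this degree is precisely $\deg_{\xij_1}\gf$, so the condition is~(iv). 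Running the same chain in reverse shows conversely that any $\gf$ satisfying (i)--(iv) produces, via $\Phi=\prod_j a_0(f_j)^{\nu_j}\gf$, a polynomial in the coefficients that is homogeneous of the prescribed degrees, isobaric of weight $w$ and translation invariant, hence a joint seminvariant by \refT{Tsemipol}.

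For the invariant assertion, \refT{Tinvpol} gives that a joint seminvariant $\Phi$ is a joint invariant if and only if $\nn_1\nu_1+\dots+\nn_\ell\nu_\ell=2w$, which is the stated extra condition. Granting it, \eqref{xisymj} is obtained by applying the invariance of $\Phi$ to the transformation $x\mapsto 1/x$, \ie, to $\rho=\smatrixx{0&1\\1&0}$, which has $|\rho|=-1$ and sends each $f_j$ to $f_j\refl$; thus $\Phi(f_1\refl,\dots,f_\ell\refl)=(-1)^w\Phi(f_1,\dots,f_\ell)$. Since the roots of $f_j\refl$ are the reciprocals $(\xij_k)\qw$ while its leading coefficient is $a_{\nn_j}(f_j)=(-1)^{\nn_j}a_0(f_j)\,\xij_1\dotsm\xij_{\nn_j}$, substituting $\Phix=\prod_j a_0(f_j)^{\nu_j}\gf$ on both sides, using $\prod_j(-1)^{\nn_j\nu_j}=(-1)^{2w}=1$, and cancelling the common factor $\prod_j a_0(f_j)^{\nu_j}$ yields exactly \eqref{xisymj}.

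I expect the one genuinely delicate point to be the degree count inside~(iv): one must check that the degree of the representing polynomial $R$ in the $j$th block of ratios is \emph{exactly}, not merely at most, $\deg_{\xij_1}\gf$, so that the bound $\nu_j\ge\deg_{\xij_1}\gf$ cannot be lowered and the equivalence is sharp. This rests on the elementary symmetric polynomials having degree one in each root, together with the algebraic independence of $e_1(\xijn),\dots,e_{\nn_j}(\xijn)$. Everything else is the bookkeeping of degrees, weights and the translation and reflection actions that has already been done for a single polynomial in \refT{Tinvroots}.
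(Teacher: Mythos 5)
Your proof is correct and follows essentially the same route as the paper, which simply says ``as for Theorem~\ref{Tinvroots}, with obvious modifications'' and whose proof of Theorem~\ref{Tinvroots} rests on exactly the ingredients you use: the fundamental theorem of symmetric polynomials via \eqref{sofiej}, the weight assignment giving each root weight $1$, Theorem~\ref{Tsemipol} for translation invariance, condition (iv) as the polynomiality criterion in $a_0(f_j)$, and the reflection $\rho$ for \eqref{xisymj}. Your extra attention to the sharpness of the degree count in (iv) is a detail the paper explicitly omits, and it is handled correctly.
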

\begin{proof}
  As for \refT{Tinvroots}, with obvious modifications.
\end{proof}

\begin{example}
  \label{Eresultant}
The \emph{resultant} of two polynomials $f=\sum_{i=0}^n a_{n-i}x^i$ and 
$g=\sum_{j=0}^m b_{m-j}x^j$ of degrees $n$ and $m$ 
and with roots $\xin$ and $\eta_1,\dots,\eta_m$ 
is
\begin{equation}\label{resultant}
R(f,g)\=a_0^{m}b_0^n\prod_{i=1}^n\prod_{j=1}^m (\xi_i-\eta_j).
\end{equation}
This is symmetric in $\xijn$ and in $\eta_1,\dots,\eta_m$ 
and has degree $w=nm$ in $\xin,\eta_1,\dots,\eta_m$. 
\refT{Tinvrootsj} applies, with $\nn_1=n$, $\nn_2=m$, $\nu_1=m$, $\nu_2=n$,
and thus 
$n_1\nu_1+n_2\nu_2=2mn=2w$; hence the resultant $R$ is a joint 
invariant of
degrees $(m,n)$ and weight $nm$.

Note that $R(g,f)=(-1)^{mn}R(f,g)$ and the formulas, see \eg{} \cite{SJN5},
\begin{equation}
  R(f,g)=a_0^m\prod_{i=1}^ng(\xi_i)
=(-1)^{mn}b_0^n\prod_{j=1}^mf(\eta_j).
\end{equation}
\end{example}

\begin{example}\label{EResHess}
  Consider the resultant of $f$ and $H(f)$, where $f$ is a polynomial of
  degree $n$. By \eqref{hessianp},
$H(f)(\xi_i)=-(n-1)^2(f'(\xi_i))^2$, and thus
  \begin{equation}
	\begin{split}
R\bigpar{f,H(f)}
&=
a_0^{2n-4}(-1)^n(n-1)^{2n}\prod_{i=1}^n f'(\xi_i)^2
=	  a_0^{-2}(-1)^n(n-1)^{2n}R(f,f')^2
\\&
=(-1)^n(n-1)^{2n}\gD^2,
	\end{split}
\raisetag\baselineskip
  \end{equation}
since $\gD=(-1)^{n(n-1)/2}a_0\qw R(f,f')$, see \cite{SJN5}.
\end{example}

\begin{example}
  \label{EJacxi}
Consider again two polynomials $f=\sum_{i=0}^n a_{n-i}x^i$ and 
$g=\sum_{j=0}^m b_{m-j}x^j$ of degrees $n$ and $m$ 
and with roots $\xin$ and $\eta_1,\dots,\eta_m$.
The difference $m\sum_{i=1}^{n}\xi_i-n\sum_{j=1}^{m}\eta_j$
satisfies (i)--(iii) in
\refT{Tinvrootsj},  and has degree 1 in $\xi_1$ and $\eta_1$, so 
$a_0b_0\bigpar{m\sum_{i=1}^{n}\xi_i-n\sum_{j=1}^{m}\eta_j}$
is a joint seminvariant.
We have, using \eqref{sofie},
\begin{multline*}
a_0b_0\Bigpar{m\sum_{i=1}^{n}\xi_i-n\sum_{j=1}^{m}\eta_j}
=
a_0b_0\Bigpar{m\frac{-a_1}{a_0}-n\frac{-b_1}{b_0}}
=-ma_1b_0+na_0b_1,
\end{multline*}
so
this equals the Jacobian seminvariant in Examples
\refand{EJacobians}{EJacobianp}. 
\end{example}

\subsection{Covariants}

Similarly, a covariant can be written as a polynomial in $x$ and the roots
$\xin$ of $f$. The following theorem yields an explicit formula.

\begin{theorem}\label{Tcovroots}
  Let $\Psi$ be a covariant of polynomials of degree $n$, and let $\Phi$ be
  its source.
Suppose that $\Phi(f)=a_0^{\nu}\gf(\xin)$ as in \refT{Tinvroots}.
Then
\begin{equation*}
  \Psi(f;x)=a_0^\nu\prodin(x-\xi_i)^\nu\cdot
 \gf\Bigpar{\frac1{x-\xi_1},\dots,\frac1{x-\xi_n}}.
\end{equation*}
\end{theorem}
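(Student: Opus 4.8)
The plan is to reduce the statement to the already-established root formula for seminvariants (\refT{Tinvroots}) combined with the reconstruction formula \eqref{semi2co} that recovers a covariant from its source. Recall that $\Psi(f;x,y)=x^{\mm}\Phi(T_{x,y}^{(3)}f)$ with $T_{x,y}^{(3)}=\smatrixx{1&y/x\\0&1}$, and in the polynomial setting (where we put $y=1$ at the end) this means $\Psi(f;x) = \Phi\bigpar{g}$ up to the appropriate power of $x$, where $g$ is a suitable transform of $f$. First I would identify, for fixed $x$, the polynomial obtained by applying to $f$ the affine-type substitution coming from $T_{x,y}^{(3)}$; tracking the action \eqref{actpol} of $GL(2)$ on polynomials, the transform $\smatrixx{1&t\\0&1}$ sends $f(u)\mapsto f(u+t)$, which is exactly a translation, so the relevant object is a translated-and-scaled copy of $f$.

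The key computation is then to read off the roots and leading coefficient of this transformed polynomial $g$. If $f$ has roots $\xi_1,\dots,\xi_n$ and leading coefficient $a_0$, a translation $f(u+t)$ has roots $\xi_i - t$ and the same leading coefficient $a_0$; but to match \eqref{semi2co} exactly one wants the transform whose effect is $u \mapsto (u+\text{something})/x$ type, i.e.\ a combination of translation by $x$ and inversion, which has roots $1/(x-\xi_i)$ and leading coefficient $a_0\prod_i(x-\xi_i)$ (the $\prod(x-\xi_i)$ factor being exactly what keeps the leading coefficient from blowing up as one inverts). Concretely: the polynomial $h(u) \= \prod_i\bigpar{1 - (x-\xi_i)u} \cdot a_0 \prod_i(x-\xi_i)^{-1}$... rather, more cleanly, the degree-$n$ polynomial with roots $1/(x-\xi_i)$ and leading coefficient $a_0\prod_i(x-\xi_i)$ is precisely the one to which \refT{Tinvroots} should be applied. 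Substituting $\Phi(h) = \bigpar{a_0\prod_i(x-\xi_i)}^{\nu}\,\gf\bigpar{\tfrac1{x-\xi_1},\dots,\tfrac1{x-\xi_n}}$ by the hypothesis $\Phi(f)=a_0^\nu\gf(\xin)$, and then checking that the power-of-$x$ bookkeeping in \eqref{semi2co} (order $\mm$, weight $w$, with $n\nu=\mm+2w$ from \eqref{cweight}) makes the extra $x$-powers cancel, yields the claimed formula $\Psi(f;x)=a_0^\nu\prod_i(x-\xi_i)^\nu\,\gf\bigpar{\tfrac1{x-\xi_1},\dots,\tfrac1{x-\xi_n}}$.

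Alternatively — and this may be the cleanest route to write down — I would verify the claimed formula directly: call the right-hand side $\Psi^*(f;x)$. One checks (a) $\Psi^*$ is a polynomial in $x$ of degree $\le \mm = n\nu - 2w$, because clearing denominators in $\gf$ (which is homogeneous of degree $w$ and has $\deg_{\xi_1}\le\nu$) against the $\prod(x-\xi_i)^\nu$ leaves a polynomial of the right degree; (b) its leading coefficient in $x$ is $a_0^\nu\gf(\xin)=\Phi(f)$, obtained by letting $x\to\infty$, since $\prod(x-\xi_i)^\nu\gf(1/(x-\xi_1),\dots) \to x^{n\nu}\cdot(\text{top part of }\gf)$ and the top-degree-in-each-variable part of a homogeneous symmetric $\gf$ recovers $\gf$ evaluated appropriately — here one uses that $\gf$ is translation invariant so it is genuinely a function of differences, giving the leading $x$-coefficient as $\gf$ itself; and (c) $\Psi^*$ transforms correctly under $GL(2)$, which by \refT{Tcov} (the covariant–seminvariant correspondence) and \refT{Tinvroots} follows once the source is correct, provided one knows $\Psi^*$ genuinely is a covariant — and that in turn follows because $\Psi^*$ is manifestly built from the roots in a way compatible with the root-action of $GL(2)$ (a Möbius transformation on $x$ and the $\xi_i$ simultaneously, matching \eqref{actpol}).

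\textbf{Main obstacle.} The delicate point is the degree/leading-coefficient bookkeeping: one must confirm that $a_0^\nu\prod_i(x-\xi_i)^\nu\,\gf\bigpar{1/(x-\xi_1),\dots,1/(x-\xi_n)}$ really is a \emph{polynomial} in $x$ (no hidden poles) and has degree exactly $\mm$ with leading coefficient $\Phi(f)$. This is where hypotheses (iii) (translation invariance, so $\gf$ depends only on differences $\xi_j - \xi_n$) and (iv) ($\nu \ge \deg_{\xi_1}\gf$) of \refT{Tinvroots} are used: (iv) ensures $\prod(x-\xi_i)^\nu$ absorbs all denominators, and (iii) pins down the top-degree term. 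The rest is the routine identification of the $GL(2)$-action on roots with the $T^{(3)}_{x,y}$ substitution, which I would phrase as a one-line appeal to \eqref{semi2co} and \eqref{actpol} rather than expand.
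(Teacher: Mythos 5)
Your plan is correct and rests on the same underlying idea as the paper's proof: express $\Psi(f;x)$ as $\Phi$ applied to a M\"obius transform of $f$ whose roots are $1/(x-\xi_i)$ and whose leading coefficient is $a_0\prod_i(x-\xi_i)$, then invoke the root formula of \refT{Tinvroots}. The only real difference is how the relevant transformation is packaged: the paper first evaluates $\Psi(f;0)$ by applying the covariance relation \eqref{cinv} with the reflection $\rho=\smatrixx{0&1\\1&0}$ (which turns the roots into $1/(-\xi_i)$ and brings in the sign $(-1)^w$, absorbed by the homogeneity of $\gf$), and then recovers $\Psi(f;x)$ from $\Psi(\,\cdot\,;0)$ by translation invariance; you instead work at a general $x$ via \eqref{semi2co}, which amounts to the single matrix $\smatrixx{x&1\\-1&0}$ of determinant $1$. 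Both routes need exactly the ingredients you flag (homogeneity of degree $w$, translation invariance, and $n\nu=\mm+2w$), and your ``main obstacle'' paragraph correctly locates where hypotheses (iii) and (iv) of \refT{Tinvroots} enter. One small slip to fix when writing this up: in the paper's convention \eqref{actpol}, the matrix $\smatrixx{1&t\\0&1}$ is \emph{not} the translation $f(u)\mapsto f(u+t)$ (translations are the lower-triangular $\smatrixx{1&0\\\gam&1}$), and the roots of $T^{(3)}_{x,1}f$ are $x\xi_i/(x-\xi_i)$ rather than $1/(x-\xi_i)$; the discrepancy is harmless because translation invariance and homogeneity of $\gf$ convert one into the other, with the resulting powers of $x$ cancelling against $x^{\mm}$, but that reduction should be carried out explicitly rather than asserted.
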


\begin{proof}
 Let $\tf$ be the binary form corresponding to $f$, and let $\tPsi(\tf)$ be
 the covariant corresponding to $\Psi(f)$; thus $\Psi(f;x)=\tPsi(\tf;x,1)$;
further, let $\tPhi$ be the source of $\tPsi$.

The reflection
$\rho(x,y)\=(y,x)$ has matrix $\smatrixx{0&1\\1&0}$ and determinant
$|\rho|=-1$; thus \eqref{cinv} yields
\begin{equation}\label{kk2}
  \Psi(f;0)=\tPsi(\tf;0,1)=(-1)^w\tPsi(\rho \tf;1,0)
=(-1)^w\tPhi(\rho \tf)
=(-1)^w\Phi(f\refl),
\end{equation}
where $f\refl$
is the polynomial corresponding to
$\rho\tf(x,y)=\tf(y,x)=\sumin a_iy^{n-i}x^i$.
We have
\begin{equation}
  \begin{split}
f\refl(x)
&=\sumin a_ix^i=x^nf(1/x)
=a_0x^n\prodin\lrpar{x\qw-\xi_i}
=a_0\prodin(1-x\xi_i)
\\&
=a_0\prodin(-\xi_i)\,\prodin(x-\xi_i\qw),
  \end{split}
\end{equation}
with roots $\xi_1\qw,\dots,\xi_n\qw$.
Consequently, 
\begin{equation}\label{kk3}
\Phi(f\refl)
=\lrpar{a_0\prodin(-\xi_i)}^\nu\gf(\xi_1\qw,\dots,\xi_n\qw)
	\end{equation}
and thus by \eqref{kk2},
since $\gf$ is homogeneous of degree $w$,	  
	\begin{equation}\label{kk4}
  \Psi(f;0)
=(-1)^w
\Phi(f\refl)
=
a_0^\nu\prodin(-\xi_i)^\nu\gf\Bigpar{\frac1{-\xi_1},\dots,\frac1{-\xi_n}}.	
\end{equation}
We have $\Psi(f;x)=\Psi^*(\xin;x)$ for some polynomial $\Psi^*$, and 
\begin{equation}
  \label{kk1}
\Psi^*(\xin;x)=\Psi^*(\xi_1-x,\dots,\xi_n-x;0)
\end{equation}
by translation invariance.
The result follows by \eqref{kk1} and  \eqref{kk4}.
\end{proof}

\begin{example}
  As a trivial example, the covariant $f$ 
has  source $a_0$, and \refT{Tcovroots} yields, with $\gf=1$,
$f=a_0\prodin(x-\xi_i)$.
\end{example}

\begin{example}
  \label{EHessroots}
The source of the Hessian covariant is, by \eqref{Hroots}, 
\begin{equation}
  H_0(f)=
-(n-1)a_0^2\sum_{1\le i<j\le n}(\xi_i-\xi_j)^2.
\end{equation}
Hence \refT{Tcovroots} shows that the Hessian covariant is given  by
\begin{equation}
  \begin{split}
  H(f;x)&=
-(n-1)a_0^2\prodkn(x-\xi_k)^2
\sum_{1\le i<j\le n}\Bigpar{\frac1{x-\xi_i}-\frac1{x-\xi_j}}^2
\\&
=
-(n-1)a_0^2
\sum_{1\le i<j\le n}\xpar{\xi_i-\xi_j}^2
\prod_{k\neq i,j}(x-\xi_k)^2.
  \end{split}
\end{equation}
Note that by extracting the leading coefficients (the coefficients of
$x^{2n-4}$), we recover \eqref{Hroots}.
\end{example}

The extension to joint covariants is straightforward; we leave the
formulation to the reader and give only a simple example.

\begin{example}
  The source of the Jacobian covariant $J(f,g)$ is by \refE{EJacxi}
  \begin{equation}
a_0b_0\Bigpar{m\sum_{i=1}^{n}\xi_i-n\sum_{j=1}^{m}\eta_j}
  \end{equation}
and thus
  \begin{equation}
J(f,g)=
a_0b_0\prodin(x-\xi_i)\prodjm(x-\eta_j)
\lrpar{m\sum_{i=1}^{n}(x-\xi_i)\qw-n\sum_{j=1}^{m}(x-\eta_j)\qw}.
  \end{equation}
\end{example}

\section{Some characterizations of vanishing invariants}\label{S=0}

In some cases, there are simple characterizations of vanishing invariants or
covariants. For example, the following basic result is an immediate
consequence of \eqref{dd}.
For simplicity, we assume in this section that $a_0\neq0$, \ie, that the
actual degree is $n$; the results immediately extend to the case $a_0=0$
by projective invariance (considering roots at infinity), see for example
\refE{Ediskr-1}.  (The results all have invariant formulations for binary
forms.) 

\begin{theorem}\label{TD=0}
  The discriminant $\gD(f)=0$ if and only if $f$ has a double root (in some
  extension field). 
\end{theorem}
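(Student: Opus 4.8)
The statement is an immediate consequence of the product formula \eqref{dd} for the discriminant, so the proof is essentially a matter of unwinding that formula. First I would recall that, by Example~\ref{DD},
\begin{equation*}
  \gD(f)=a_0^{2\nn-2}\prod_{1\le i<j\le\nn}(\xi_i-\xi_j)^2,
\end{equation*}
where $\xi_1,\dots,\xi_\nn$ are the roots of $f$ in a splitting field and $a_0\neq0$ is assumed throughout this section. Since we work over a field of characteristic $0$ (or, more generally, a field in which the relevant denominators are invertible), the ground field embeds in such a splitting field, and the quantity on the right is well defined there.

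\textbf{Forward direction.} Suppose $f$ has a double root, i.e.\ $\xi_i=\xi_j$ for some pair $i\neq j$. Then the factor $(\xi_i-\xi_j)^2$ in the product above vanishes, so the whole product vanishes, and hence $\gD(f)=0$.

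\textbf{Converse.} Suppose $\gD(f)=0$. Since $a_0\neq0$, the factor $a_0^{2\nn-2}$ is nonzero, so we must have $\prod_{1\le i<j\le\nn}(\xi_i-\xi_j)^2=0$. As we are working inside a field (an integral domain), a product of elements is zero only if one of the factors is zero; hence $(\xi_i-\xi_j)^2=0$, and therefore $\xi_i=\xi_j$, for some $i\neq j$. Thus $f$ has a double root (possibly in the extension field where the roots live).

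\textbf{Where the work really is.} There is no serious obstacle here: the content has been pushed entirely into establishing \eqref{dd} and Example~\ref{DD} (i.e.\ that $\gD$ as defined by the product formula agrees with the invariant $\gD$ obtained via Theorem~\ref{Tinvroots}), which we are entitled to assume. The only point worth a remark is the tacit use of a splitting field and the fact that ``double root'' is a statement about roots in an extension; the claim that $\gD(f)$ itself, being a polynomial in the coefficients $a_0,\dots,a_\nn$, lies in the ground field is automatic, and its vanishing is detected by the factorization over the extension. For $a_0=0$ one passes to the projective picture as in Example~\ref{Ediskr-1}, counting $\infty$ with multiplicity $\nn-\deg f$; the same argument then applies to the corresponding binary form.
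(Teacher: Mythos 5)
Your proof is correct and follows exactly the route the paper intends: the paper states this theorem as "an immediate consequence of \eqref{dd}" (the product formula $\gD=a_0^{2\nn-2}\prod_{i<j}(\xi_i-\xi_j)^2$), and your argument simply unwinds that formula in both directions, with the same standing assumption $a_0\neq0$ and the same remark about the projective extension. Nothing to add.
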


Equivalently, a binary form $f(x,y)$ has discriminant 0  if and only if 
it has a square factor $(ax+by)^2$.

\begin{theorem}\label{THess=0}
  The  Hessian covariant $H(f)=0$ if and only if $f$
has a single root, \ie,  $\xi_1=\dots=\xi_n$; equivalently,
$f(x)=a_0(x-\xi)^n$ for some $a_0$ and $\xi$. 
\end{theorem}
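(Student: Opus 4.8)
The plan is to convert the polynomial identity $H(f)=0$ into an identity for the logarithmic derivative $g\=f'/f$ and then test it pole by pole at the roots of $f$. First I would dispose of the degenerate cases: if $\nn\le1$ then $H(f)\equiv0$ because of the factor $\nn-1$ in \eqref{hessianp}, and a polynomial of degree $\le1$ has a single root, so the statement holds trivially; thus assume $\nn\ge2$ (and, as assumed throughout this section, $a_0\neq0$, so $f\not\equiv0$ and $f$ has exactly $\nn$ roots counted with multiplicity). Starting from \eqref{hessianp},
\begin{equation*}
  H(f;x)=\nn(\nn-1)f f''-(\nn-1)^2(f')^2,
\end{equation*}
I would divide by $\nn-1$ and use $ff''-(f')^2=f^2(f'/f)'$ together with $f'=gf$ to obtain, with $g\=f'/f$,
\begin{equation*}
  H(f)=(\nn-1)\,f^2\bigpar{\nn g'+g^2}
\end{equation*}
as an identity of rational functions over the ground field. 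Hence $H(f)=0$ if and only if $\nn g'+g^2=0$.

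The backward implication of the theorem is then immediate: if $f=a_0(x-\xi)^\nn$ then $g=\nn/(x-\xi)$, so $\nn g'+g^2=-\nn^2(x-\xi)\qww+\nn^2(x-\xi)\qww=0$ and $H(f)=0$. For the forward implication, pass to a splitting field of $f$ and write $f=a_0\prod_{k=1}^r(x-\zeta_k)^{m_k}$ with $\zeta_1,\dots,\zeta_r$ the distinct roots, $m_k\ge1$ and $\sum_k m_k=\nn$; then $g=\sum_{k=1}^r m_k(x-\zeta_k)\qw$. In the partial-fraction expansion of $\nn g'+g^2$ the coefficient of the double pole $(x-\zeta_k)\qww$ equals $-\nn m_k+m_k^2=m_k(m_k-\nn)$: the cross terms $m_jm_k(x-\zeta_j)\qw(x-\zeta_k)\qw$ with $j\neq k$ contribute only simple poles, so nothing else can affect that coefficient. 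If $H(f)=0$, then $\nn g'+g^2=0$, whence $m_k(m_k-\nn)=0$ for every $k$; since $m_k\ge1$ this forces $m_k=\nn$, and then $\sum_k m_k=\nn$ forces $r=1$. Thus $f=a_0(x-\zeta_1)^\nn$, that is, $\xi_1=\dots=\xi_\nn$.

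The one substantive point is this pole-by-pole step, namely that the double-pole coefficients of $\nn g'+g^2$ cannot be cancelled by the rest of the partial-fraction expansion; this is exactly what makes ``$\nn g'+g^2$ vanishes'' testable locally at each root. Equivalently, and avoiding rational functions, one may expand \eqref{hessianp} directly at a root $\zeta_k$ of multiplicity $m_k$: writing $f=(x-\zeta_k)^{m_k}u$ with $u(\zeta_k)\neq0$ one finds that $H(f)$ vanishes there to order exactly $2m_k-2$ with leading coefficient a nonzero constant times $m_k(m_k-\nn)$, so $H(f)\not\equiv0$ as soon as some $m_k<\nn$. I would not attempt to read this off the root formula in \refE{EHessroots} directly, since the cancellations among its pair-terms are awkward to track once $f$ has repeated roots. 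Throughout, characteristic $0$ is used only to invert $\nn-1$ and $\nn-m_k$.
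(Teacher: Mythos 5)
Your proof is correct, but it takes a genuinely different route from the paper's. The paper deduces the theorem in two lines from machinery already in place: by \eqref{hjf'} one has $H(f)=(n-1)J(f',f)$, so $H(f)=0$ forces $J(f',f)=0$, and \refT{TJac=0} then gives $f'=ah^{d_1}$, $f=bh^{d_2}$ for a common polynomial $h$; comparing degrees, $1=(d_2-d_1)\deg(h)$, so $\deg(h)=1$ and $f$ is a power of a linear factor (the converse likewise via \eqref{hjf'} and \refT{TJac=0}). You instead work directly with the logarithmic derivative: the identity $H(f)=(n-1)f^2(ng'+g^2)$ with $g=f'/f$ is correct, and reading off the coefficient $m_k(m_k-n)$ of the double pole $(x-\zeta_k)^{-2}$ is a sound, self-contained way to force every multiplicity to equal $n$. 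It is worth observing that your pole-by-pole step is essentially the same local-at-a-root computation that the paper uses to prove \refT{TJac=0} itself (expanding $J(f,g)$ at a root of multiplicities $k_1,k_2$ and extracting the coefficient $n_2k_1-n_1k_2$), so the two arguments are close cousins: the paper's factoring through the Jacobian buys brevity and reuses a theorem it wants anyway, while yours avoids the detour, stays self-contained, and exhibits the underlying differential equation $nff''=(n-1)(f')^2$ whose polynomial solutions are exactly $c(x-\xi)^n$. Your handling of the degenerate case $n\le1$ and your remark on where characteristic $0$ enters are both fine.
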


Equivalently, a binary form $f(x,y)$ has Hessian covariant $H(f)=0$ 
if and only if it equals $c(ax+by)^n$ for some $a,b,c$.

\begin{theorem}\label{TJac=0}
  The  Jacobian joint covariant $J(f,g)=0$ if and only if $f$ and $g$
have the same roots, and their multiplicities always are in the same
proportion $n_1/n_2$;
equivalently, $f(x)=a_0h(x)^{\mud_1}$ and $g(x)=b_0h(x)^{\mud_2}$ for some
polynomial $h$ and some integers $\mud_1,\mud_2\ge1$.
In particular, if $f$ and $g$ have the same degree, then $J(f,g)=0$ if and
only if $f$ and $g$ are proportional.
\end{theorem}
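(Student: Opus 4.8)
The plan is to reduce everything to the explicit formula \eqref{jacobianp}, $J(f,g)=n_2 f'g-n_1 fg'$, and then run a logarithmic‑derivative / partial‑fraction argument. As in the rest of this section we take $a_0\neq0$ and $b_0\neq0$, so $f$ and $g$ are nonzero of the stated degrees $n_1,n_2$, which we assume to be $\ge1$.

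First I would observe that, by \eqref{jacobianp}, $J(f,g)=0$ is equivalent to the polynomial identity $n_2 f'g=n_1 fg'$, hence to the identity of rational functions $n_2 f'/f=n_1 g'/g$. Passing to a splitting field of $fg$, write $f=a_0\prod_i(x-\xi_i)^{m_i}$ and $g=b_0\prod_j(x-\eta_j)^{\ell_j}$ with the $\xi_i$ distinct, the $\eta_j$ distinct, and all $m_i,\ell_j\ge1$. Then $f'/f=\sum_i m_i/(x-\xi_i)$ and $g'/g=\sum_j \ell_j/(x-\eta_j)$, so the identity reads
\[
 \sum_i\frac{n_2 m_i}{x-\xi_i}=\sum_j\frac{n_1\ell_j}{x-\eta_j}.
\]
In characteristic $0$ the residues $n_2 m_i$ and $n_1\ell_j$ are nonzero, so by uniqueness of the partial‑fraction expansion the two pole sets coincide (\ie{} $f$ and $g$ have the same roots) and at each common root the multiplicities satisfy $n_2 m=n_1\ell$, \ie{} $m/\ell=n_1/n_2$.

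Next I would extract the structural conclusion. Writing $n_1/n_2=d_1/d_2$ in lowest terms, the relations $n_2 m_i=n_1\ell_i$ force $m_i=d_1 k_i$ and $\ell_i=d_2 k_i$ for positive integers $k_i$; with $h\=\prod_i(x-\xi_i)^{k_i}$ this gives $f=a_0 h^{d_1}$ and $g=b_0 h^{d_2}$, as claimed (and conversely such a representation obviously exhibits $f$ and $g$ as having the same roots with multiplicities in ratio $d_1:d_2=n_1:n_2$, using $n_1=d_1\deg h$, $n_2=d_2\deg h$). To close the loop, if $f=a_0 h^{d_1}$ and $g=b_0 h^{d_2}$ with $d_1,d_2\ge1$, then comparing degrees gives $n_1=d_1\deg h$ and $n_2=d_2\deg h$, and a one‑line differentiation yields
\[
 J(f,g)=n_2 f'g-n_1 fg'=a_0 b_0\,(n_2 d_1-n_1 d_2)\,h^{d_1+d_2-1}h'=0,
\]
since $n_2 d_1-n_1 d_2=d_1 d_2\deg h-d_1 d_2\deg h=0$. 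For the last assertion, when $n_1=n_2=n$ the equation $J(f,g)=0$ becomes $n(f'g-fg')=0$, \ie{} $(f/g)'=0$, so $f/g$ is constant and $f,g$ are proportional; conversely $J(f,cf)=0$ is immediate from \eqref{jacobianp} (or from the antisymmetry of $J$ noted in \refE{EJacobian}).

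I expect the only nonroutine point to be the partial‑fraction/residue step: it is what simultaneously delivers the coincidence of the root sets and the proportionality of the multiplicities, and it is exactly where characteristic $0$ enters (nonvanishing of the integer residues); a little extra care is needed to keep track of common factors when $n_1/n_2$ is not already in lowest terms. One could instead start from the root‑formula of \refT{Tcovroots} applied to the Jacobian source of \refE{EJacxi}, but that leads to essentially the same computation.
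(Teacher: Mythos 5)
Your argument is correct, and it reaches the same key fact as the paper --- that at every common root the multiplicities $m,\ell$ satisfy $n_2m=n_1\ell$ --- but by a different mechanism. The paper works locally: it moves each root to $0$ by a projective transformation, expands $f=ax^{k_1}+\dots$, $g=bx^{k_2}+\dots$, and reads off the lowest-degree coefficient $(n_2k_1-n_1k_2)ab$ of $J(f,g)$ from \eqref{jacobianp}; it then deduces $f^{n_2}=cg^{n_1}$ and appeals to unique factorization into irreducibles over the ground field to produce $h$. You instead divide \eqref{jacobianp} by $fg$ and compare the partial-fraction expansions of $n_2f'/f$ and $n_1g'/g$ over a splitting field; this delivers the coincidence of root sets and the residue identities $n_2m_i=n_1\ell_i$ in one stroke, and your explicit construction $h=\prod_i(x-\xi_i)^{k_i}$ via the lowest-terms reduction $n_1/n_2=d_1/d_2$ is cleaner and more informative than the paper's appeal to unique factorization. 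The one point you should add a line for is that your $h$ is a priori only defined over the splitting field, whereas the statement wants $h$ over the ground field $F$: taking $h$ monic, the relation $h^{d_1}=f/a_0\in F[x]$ forces $h\in F[x]$ (uniqueness of monic $d_1$th roots in characteristic $0$, or Galois invariance), which the paper sidesteps by factoring into irreducibles over $F$ directly. Your converse computation and the equal-degree case $(f/g)'=0$ match the paper's; the standing assumption $a_0,b_0\neq0$ is the same one the paper makes for this section, with roots at infinity handled by projective invariance.
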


\begin{proof}
  Suppose that $\xi$ is a root of $f$ or $g$, and let the multiplicities of
  the root be $k_1\ge0$ and $k_2\ge0$ (with $k_1+k_2>0$). By a projective
  transformation we may assume that $\xi=0$. Then $f(x)=ax^{k_1}+\dots$ and
  $g(x)=bx^{k_2}+\dots$ (showing the lowest degree terms only), with
  $a,b\neq0$,
and \refE{EJacobianp} shows that
\begin{equation}
  J(f,g)=(n_2k_1-n_1k_2)abx^{k_1+k_2-1}+\dots
\end{equation}
Hence $J(f,g)=0$ implies $n_2k_1-n_1k_2=0$, and thus both $k_1$ and $k_2$ are
non-zero  and $k_1/k_2=n_1/n_2$.
This shows that $f^{n_2}$ and $g^{n_1}$ have the same roots, with the same
multiplicities, and thus $f^{n_2}=cg^{n_1}$ for some $c$. The result follows
by the unique factorization of polynomials into irreducible ones.

Conversely, if $f=a_0h^{\mud_1}$ and $g=b_0h^{\mud_2}$, then
\begin{equation*}
  J(f,g)=a_0b_0\mud_1h^{\mud_1-1}\mud_2h^{\mud_2-1}J(h,h)=0.
\qedhere
\end{equation*}
\end{proof}

\begin{proof}[Proof of \refT{THess=0}]
  If $H(f)=0$, then \eqref{hjf'} yields $J(f',f)=0$, and thus \refT{TJac=0}
yields
$f'=ah^{\mud_1}$, $f=bh^{\mud_2}$ for some polynomial $h$ and constants $a$,
  $b$, $\mud_1$ and $\mud_2$. Then $n-1=\mud_1\deg(h)$ and $n=\mud_2\deg(h)$,
and consequently $1=(\mud_2-\mud_1)\deg(h)$, which implies $\deg(h)=1$ and
$\mud_2=n$.

The converse follows directly from \eqref{hjf'} and \refT{TJac=0}.
\end{proof}

\begin{theorem}[{\cite[Satz 2.11]{Schur}}]\label{Tinv0}
All non-constant invariants of a polynomial of degree $\nn$ vanish if and only
if the polynomial has a root of multiplicity $>\nn/2$. 
(This includes the case when the actual degree is $<\nn$;
there is a root at $\infty$ of multiplicity more than $\nn/2$ when
the degree is $<\nn/2$.)
\end{theorem}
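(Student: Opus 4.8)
The statement splits into two implications. The ``if'' direction (a root of multiplicity $>\nn/2$ kills every non-constant invariant) is a short weight count; the ``only if'' direction is the substantive one — it is the classical description of the nullcone of binary forms, and is \cite[Satz 2.11]{Schur}.

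\emph{The ``if'' direction.} Suppose $f$, regarded as a polynomial of degree $\nn$, has a root of multiplicity $m>\nn/2$; as a zero of the associated binary form this is a well-defined projective point (the point at infinity when $a_0=0$ and the actual degree is smaller). Choose $T\in GL(2)$ carrying it to $[1:0]$; then $Tf$ has the same root multiplicities and a root of multiplicity $m$ at $[1:0]$, i.e.\ $a_0(Tf)=\dots=a_{m-1}(Tf)=0$. Since $\Phi(Tf)=|T|^w\Phi(f)$ with $|T|\neq0$, it is enough to see that $\Phi$ vanishes whenever $a_0=\dots=a_{m-1}=0$. Now let $\Phi$ be any non-constant invariant, of degree $\nu\ge1$ and weight $w$; recall that $\Phi$ is isobaric of weight $w$ and that $\nn\nu=2w$ by \eqref{weight}. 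A monomial $\prod_i a_i^{e_i}$ of $\Phi$ has $\sum_i e_i=\nu$ and $\sum_i i\,e_i=w$; if it does not involve any $a_i$ with $i<m$ then $w=\sum_{i\ge m} i\,e_i\ge m\sum_i e_i=m\nu>\tfrac{\nn}2\,\nu=w$, which is impossible. Hence every monomial of $\Phi$ contains a factor $a_i$ with $i<m$ and so vanishes on $f$, giving $\Phi(f)=0$. (Equivalently one may argue through \refT{Tinvroots}: writing $\Phi(f)=a_0^\nu\gf(\xin)$ and using translation invariance of $\gf$ to put the $m$-fold root at the origin, the resulting polynomial in the remaining $\nn-m$ root variables is homogeneous of degree $w$ yet of total degree $\le(\nn-m)\nu<w$, hence $0$.)

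\emph{The ``only if'' direction.} I would prove this via orbit closures (over the algebraic closure of the ground field, which changes nothing: roots already live in an extension, and whether an invariant vanishes is insensitive to the base field). Pass to the binary form $\tf$ and the group $SL(2)$; by \eqref{weight} the non-zero homogeneous $SL(2)$-invariant polynomials on the space of degree-$\nn$ forms are exactly the classical invariants. So the hypothesis says that $\tf$ lies in the nullcone, and — using finite generation of the invariant ring (Gordan/Hilbert, cited above) together with the fact that invariants separate the closed orbit $\{0\}$ from everything outside its closure — this is equivalent to $0\in\overline{SL(2)\cdot\tf}$. By the Hilbert--Mumford criterion, $0\in\overline{SL(2)\cdot\tf}$ iff some one-parameter subgroup $\gl(t)$ has $\gl(t)\tf\to0$ as $t\to0$. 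Every one-parameter subgroup of $SL(2)$ is conjugate to $t\mapsto\smatrixx{t&0\\0&t\qw}$, which by \eqref{Tf} multiplies $a_i$ by $t^{\nn-2i}$; so, after replacing $\tf$ by a suitable $T\tf$, the limit is $0$ precisely when $a_i=0$ for every $i$ with $\nn-2i\le0$. That says $T\tf$ has a root of multiplicity $>\nn/2$ at $[0:1]$, hence $f$ has a root of multiplicity $>\nn/2$; the case of actual degree $<\nn$ is covered by counting the root at infinity, as in \refE{Ediskr-1}.

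\emph{Main obstacle.} The forward direction is routine. The real difficulty is the converse: from the mere absence of a heavy root one must exhibit a non-vanishing invariant, and there is no uniform-in-$\nn$ formula to write down — the clean proof is geometric. The one genuine gap in a self-contained account is that the nullcone/Hilbert--Mumford input is not developed in this survey; in practice I would present the ``if'' direction in full and cite \cite[Satz 2.11]{Schur} for the ``only if''.
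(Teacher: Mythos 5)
The survey states this theorem with only the citation to \cite[Satz 2.11]{Schur} and gives no proof, so there is no in-paper argument to compare yours against; your proposal therefore has to be judged on its own. The ``if'' direction is correct and complete, and uses only tools the survey actually develops: after a projective change of variables putting the heavy root at $[1:0]$ (so $a_0=\dots=a_{m-1}=0$), isobaricity and homogeneity give $\sum_i e_i=\nu$ and $\sum_i i\,e_i=w$ for each monomial, and combined with $\nn\nu=2w$ from \eqref{weight} the assumption $m>\nn/2$ forces every monomial of a non-constant invariant to contain some $a_i$ with $i<m$; this also covers the root at infinity when the actual degree is $<\nn$, since that case is already of the form $a_0=\dots=a_{m-1}=0$. (Your parenthetical alternative via \refT{Tinvroots} only treats the case of a finite heavy root, but the main argument subsumes it.) The ``only if'' direction via the nullcone and the Hilbert--Mumford criterion is sound and is in fact essentially Hilbert's original route, but it imports three inputs nowhere established in the survey: that the common zero set of all non-constant invariants coincides with the set of forms having $0$ in the closure of their $SL(2)$-orbit (which needs finite generation plus the separation property of invariants over an algebraically closed field), the one-parameter-subgroup criterion itself, and the conjugacy of all one-parameter subgroups of $SL(2)$ into the diagonal torus. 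You flag this candidly and propose to cite Schur for that half, which is consistent with what the paper itself does for the whole statement; as a self-contained addition only your ``if'' half stands on the survey's foundations, and that is the right assessment of your own argument.
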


Note that seminvariants still may be non-zero.

\begin{example}
  The seminvariant $a_0(f)=0$ if and only if $\deg(f)\le \nn-1$. 
If, for example,  $f(x)=x^\nn$, then $a_0\neq0$ while all invariants vanish by
\refT{Tinv0}. 
\end{example}

\begin{example}
  If $f(x)=x^{\nn-1}(x-b)$, with $a\neq0$, then $f$ has a root
  $\xi_1=\dots=\xi_{\nn-1}=0$ of   multiplicity $(\nn-1)$ and a simple root
  $\xi_{\nn}=b$. If $\nn\ge3$, then all invariants of $f$ vanish by \refT{Tinv0}.
However, the Hessian seminvariant is $-b^2\neq0$ by \eqref{Hroots} (or by
\eqref{hessians} and $a_1=-b$, $a_2=0$).

Similarly, again by \eqref{Hroots}, the Hessian seminvariant is non-zero for
any polynomial with all $\nn$ roots real, unless all roots coincide.
\end{example}

\refT{THess=0} characterizes the polynomials that can be written as an $n$th
power $c(x-\xi)^n$. There is a generalization (due to \citet{Gund}, see also
\citet{Kung}) 
to sums $\sum_{i=1}^m c_i(x-\xi_i)^n$ of a given number $m$ of such powers;
however, we also have to 
include limit cases corresponding to several coinciding $\xi_i$, and the
precise statement is as follows.

\begin{theorem}\label{TGund=0}
  The following are equivalent, for any polynomial $f$ of degree $n$ and
  $1\le m\le n$.
  \begin{romenumerate}
  \item 
$G_m(f)=0$.
  \item 
$f$ belongs to the closure $\bcP_{n,m}$ of the set of polynomials
$\cP_{n,m}\=\set{\sum_{i=1}^m c_i(x-\xi_i)^n:c_i,\xi_i\in F}$.
\item \label{tgundsum}
$f=\sum_{i=1}^l \sum_{j=0}^{m_i-1} c_{ij}(x-\xi_i)^{n-j}$
for some $l\le m$, $m_i\ge1$ with $\sum_{i=1}^l m_i=m$, $c_{ij}\in F$ and
$\xi_i\in \Fx$,
for $i=1,\dots,l$ and $j=0,\dots,m_i-1$.
\item \label{tgundapolar}
There exists a polynomial $g$ of degree (at most) $m$ such that the
  apolar invariant $\tv{f,g}_m=0$.
  \end{romenumerate}
\end{theorem}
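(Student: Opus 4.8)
The statement is the classical theorem of Sylvester and Gundelfinger relating the vanishing of a catalecticant‑type covariant to apolarity and to Waring‑type decompositions; I would prove it as the cycle \textup{(i)}$\Rightarrow$\textup{(iv)}$\Rightarrow$\textup{(iii)}$\Rightarrow$\textup{(ii)}$\Rightarrow$\textup{(i)}. Since $G_m$ has nominal order $(m+1)(n-2m)$, it vanishes identically when $2m>n$; in that range one checks directly that \textup{(ii)}--\textup{(iv)} also hold for every $f$ of degree $n$ (the $m$‑th secant variety of the rational normal curve already fills the space of forms, and the linear map $g\mapsto\{f,g\}_m$ from forms of degree $m$ to forms of degree $n-m$ cannot be injective when $m>n-m$), so the content lies in the range $2m\le n$, which I assume henceforth.

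The two easy implications first. For \textup{(ii)}$\Rightarrow$\textup{(i)}: the coefficients of the covariant $G_m(f;x)$ are polynomials in the coefficients of $f$, so $\set{f:G_m(f)=0}$ is closed, and it suffices to verify $G_m(f)=0$ for $f=\sum_{s=1}^{m}c_s(x-\xi_s)^n\in\cP_{n,m}$. Then $f^{(r)}(x)=\frac{n!}{(n-r)!}\sum_s c_s(x-\xi_s)^{n-r}$, so by \refE{EGundp} the $(m+1)\times(m+1)$ Gundelfinger matrix equals $\frac{n!}{(n-2m)!}$ times $\bigsqpar{\sum_s c_s(x-\xi_s)^{n-i-j}}_{0\le i,j\le m}=\sum_{s=1}^{m}\bigpar{c_s(x-\xi_s)^n}\,u_s u_s^{\top}$ with $u_s=\bigpar{1,(x-\xi_s)^{-1},\dots,(x-\xi_s)^{-m}}^{\top}$, a sum of $m$ rank‑one matrices, whence its determinant is identically $0$ in $x$. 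For \textup{(iii)}$\Rightarrow$\textup{(ii)} it is enough to show that a single cluster $p(x)=\sum_{j=0}^{\mu-1}c_j(x-\xi)^{n-j}$ with $\mu\le m$ is a limit of $\mu$ distinct $n$‑th powers: fix distinct $t_1,\dots,t_\mu$, put $\zeta_k=\xi+\ep t_k$, and let $\gamma_k(\ep)$ solve the Vandermonde system $\sum_k\gamma_k t_k^{\,r}=\binom{n}{r}^{-1}(-\ep)^{-r}c_r$ for $r<\mu$; expanding $(x-\zeta_k)^n$ shows that the first $\mu$ coefficients of $\sum_k\gamma_k(x-\zeta_k)^n$ reproduce $p$ exactly while the rest are $O(\ep)$ (as $\gamma_k=O(\ep^{-(\mu-1)})$), so $\sum_k\gamma_k(x-\zeta_k)^n\to p$. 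Applying this with a common $\ep$ to all $l$ clusters of \textup{(iii)} exhibits $f\in\bcP_{n,m}$.

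Now the substantive implications. For \textup{(i)}$\Rightarrow$\textup{(iv)}: by \refE{EGunds} the source of $G_m(f;x)$ is, up to a nonzero scalar, the catalecticant $\bigabs{\ha_{i+j}}_{i,j=0}^{m}$, and since $G_m$ is equivariant under the translations $A(1)$, the hypothesis $G_m(f)=0$ means this determinant vanishes for $f(x+t)$ for all $t$; using the symmetry and unimodality of the Hilbert function of the apolar algebra of $f$ (the Gorenstein property) this forces the full $(n-m+1)\times(m+1)$ catalecticant $\bigsqpar{\ha_{i+j}}_{0\le i\le n-m,\,0\le j\le m}$ to have rank $\le m$; a nonzero kernel vector is the coefficient vector of a nonzero form $g$ of degree $m$ with $\{f,g\}_m=0$, because by \refE{Etrans} and \refE{Etranss} the matrix of $g\mapsto\{f,g\}_m$ is, up to invertible diagonal factors, precisely this catalecticant. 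For \textup{(iv)}$\Rightarrow$\textup{(iii)}: by \refE{Eapolarcov}, after the reflection $g\mapsto g\refl$ that turns $\{f,g\}_m$ into the constant‑coefficient operator $g(\partial_x,\partial_y)$, the condition $\{f,g\}_m=0$ says the binary form of $f$ lies in $\ker g(\partial_x,\partial_y)$ on forms of degree $n$; factoring $g=c\prod_i\ell_i^{\,m_i}$ (over $\Fx$, $\sum m_i=m$), the commuting operators $\ell_i(\partial)$ each annihilate exactly the corresponding $n$‑th power, $g(\partial)$ is onto (its transpose is multiplication by $g$, which is injective), so $\ker g(\partial)=\bigoplus_i\ker\ell_i(\partial)^{m_i}$ has dimension $\sum m_i=m$ and is spanned, in polynomial form, by $(x-\xi_i)^{n},\dots,(x-\xi_i)^{n-m_i+1}$, giving $f=\sum_i\sum_{j=0}^{m_i-1}c_{ij}(x-\xi_i)^{n-j}$; choosing $g$ generic in the projective space of degree‑$m$ apolar forms (or passing to $f\refl$) avoids a zero of $g$ at infinity, so all $\xi_i$ are finite as required. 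See \cite{Gund} and \cite{Kung}.

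The real obstacle is this pair of classical steps: translating the \emph{identical} vanishing of $G_m$ into genuine rank deficiency of the apolar catalecticant — which needs the Gorenstein symmetry of the apolar algebra rather than a single determinantal identity — and then reading off the confluent Waring decomposition from $\ker g(\partial)$ while tracking multiplicities and the possible zero of $g$ at infinity. The remaining two implications are, as above, just an elementary confluence limit and the rank‑one identity.
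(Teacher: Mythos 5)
The paper itself states this theorem without proof, attributing it to Gundelfinger and pointing to \cite{Gund} and \cite{Kung}, so there is no in-paper argument to compare against. Judged on its own terms, your cycle is mostly sound and standard: (ii)$\Rightarrow$(i) via the rank-one decomposition of the Gundelfinger matrix, (iii)$\Rightarrow$(ii) via the confluence/Vandermonde limit, and (iv)$\Rightarrow$(iii) via factoring $g$ and computing $\ker g(\partial)$ are all correct in outline (modulo two small matters: the cluster at $\xi_i=\infty$ is not treated in (iii)$\Rightarrow$(ii), though it follows from the projective invariance of $\bcP_{n,m}$ and of the vanishing locus of $G_m$; and your closing remark in (iv)$\Rightarrow$(iii) is off the mark, since condition (iii) explicitly allows $\xi_i=\infty$ and ``choosing $g$ generic'' is unavailable when the space of apolar forms of degree $m$ is one-dimensional).

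The genuine gap is (i)$\Rightarrow$(iv), which you correctly single out as the crux but do not actually prove. Knowing that the source of $G_m(Tf)$ --- the corner minor $\det\bigsqpar{\ha_{i+j}(Tf)}_{i,j=0}^m$ --- vanishes for every $T$ does not, via ``symmetry and unimodality of the Hilbert function,'' visibly force the rectangular catalecticant $\bigsqpar{\ha_{i+j}}_{0\le i\le n-m,\,0\le j\le m}$ to drop rank: the Gorenstein properties are facts about $f$ alone, and they do not by themselves convert ``the square corner minor of every translate vanishes'' into ``the $m+1$ columns are dependent,'' because the corner minor of a single $Tf$ pairs the derivatives of $f$ against only the $m+1$ functionals attached to that one $T$. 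The standard way to close this is to read off from \eqref{gund} that $G_m(f)=\det\bigsqpar{\pd_x^{m-i}\pd_y^{i}F_j}$ with $F_j\=\pd_x^{m-j}\pd_y^{j}f$, \ie{} $G_m(f)$ is the binary Wronskian of $F_0,\dots,F_m$, and to invoke the characteristic-zero Wronskian criterion: the Wronskian of forms of degree $n-m\ge m$ vanishes identically if and only if the forms are linearly dependent. A dependence $\sum_j(-1)^jc_{m-j}F_j=0$ is, by \eqref{trans}, precisely $\tv{f,g}_m=0$ for $g=\sum_jc_jx^{m-j}y^j$, which yields (iv) (and gives (iv)$\Rightarrow$(i) trivially by column operations). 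Either supply this Wronskian lemma or follow Kung's argument via Sylvester's theorem; as written, the implication is asserted rather than derived.
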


\begin{remark}
In \ref{tgundsum}, we allow the possibility $\xi_i=\infty$; in this case we use the
interpretation $(x-\infty)^{n-j}\=x^j$ (which is natural from a projective
perspective). 

By ``closure'' in (ii), we mean in the ordinary topological sense
(identifying a polynomial with its vector $\ax$ of coefficients) if, for
example, we consider the field of rational, real or complex numbers.
In general, the closure can be interpreted algebraically, as the set of all
$f=f_0$ for some family $f_\eps$ of polynomials, 
with coefficients that are
polynomials in a parameter 
$\eps\in F$ (or $\eps\in\bbQ$),
such that $f_\eps\in\cP_{n,m}$ 
for all $\eps\neq0$.  
\end{remark}

In particular, if $m>n/2$, then $\bcP_{n,m}=\cP_n$, \ie, every polynomial is
in $\bcP_{n,m}$, since then $G_m$ vanishes identically on $\cP_n$, see
\refE{EGund}. If $n$ is even, then $G_{n/2}$ is a multiple of the
catalecticant $\Han(f)$, see \refE{EGund}, and thus 
we have the corollary:
\begin{corollary}\label{CGund=0}
  If $n$ is even then $\Han(f)=0$ if and only if
$f\in\bcP_{n,n/2}$, \ie, if and only if
$f$ is as in \refT{TGund=0}\ref{tgundsum} with $m=n/2$.
\end{corollary}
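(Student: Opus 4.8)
The plan is to obtain this directly as a corollary of \refT{TGund=0} with the choice $m=n/2$. Since $n$ is even (and implicitly $n\ge 2$, so that $\Han(f)$ is defined), we have $1\le m=n/2\le n$, so \refT{TGund=0} applies with this $m$; its conditions (i)--(iv) are therefore equivalent for our $f$ of degree $n$. In particular, $G_{n/2}(f)=0$ if and only if $f\in\bcP_{n,n/2}$, which in turn holds if and only if $f$ has the shape described in \refT{TGund=0}\ref{tgundsum} with $m=n/2$ (and also if and only if condition (iv) there holds, but we do not need that formulation here).

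It then remains only to replace $G_{n/2}(f)$ by $\Han(f)$ in condition (i). For this I would invoke \eqref{gund=hankel} from \refE{EGund}: for even $n$ and $k=n/2$ one has $G_{n/2}(f)=n!^{\,n/2+1}\,\Han(f)$. Because the ground field has characteristic $0$ (see the Remark in \refS{S:intro}), the scalar $n!^{\,n/2+1}$ is nonzero, and hence $G_{n/2}(f)=0$ if and only if $\Han(f)=0$. Chaining this with the equivalences of the previous paragraph gives
\[
\Han(f)=0 \iff G_{n/2}(f)=0 \iff f\in\bcP_{n,n/2},
\]
and $\bcP_{n,n/2}$ is exactly the set described in \refT{TGund=0}\ref{tgundsum} with $m=n/2$, which is the assertion of the corollary.

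There is essentially no obstacle: the only point requiring a word of care is that the passage between $G_{n/2}$ and $\Han$ multiplies by the scalar $n!^{\,n/2+1}$, which must be nonzero — true under the standing characteristic-$0$ hypothesis (it could fail in finite characteristic dividing $n!$, consistent with the earlier remarks on such fields). So the proof is just: apply \refT{TGund=0} with $m=n/2$ and use \eqref{gund=hankel} together with invertibility of that scalar.
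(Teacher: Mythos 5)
Your proof is correct and follows exactly the paper's route: the corollary is obtained by applying \refT{TGund=0} with $m=n/2$ and noting via \eqref{gund=hankel} that $G_{n/2}(f)$ is a nonzero scalar multiple of $\Han(f)$. Your extra remark about the scalar $n!^{\,n/2+1}$ being invertible in characteristic $0$ is a sensible precaution that the paper leaves implicit.
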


The relation between \ref{tgundsum} and \ref{tgundapolar} in \refT{TGund=0}
can be made more precise as follows, see \cite{KR}.

\begin{theorem}\label{TGund=0x}
  The following are equivalent, for a polynomial $f$ of degree $n$ and
given $\xi_i\in \Fx$ and $m_i\ge1$, $i=1,\dots,l$,
with $m\=\sum_{i=1}^l m_i\le n$, 
  \begin{romenumerate}
\item 
$f=\sum_{i=1}^l \sum_{j=0}^{m_i-1} c_{ij}(x-\xi_i)^{n-j}$
for some 
$c_{ij}\in F$, $i=1,\dots,l$ and $j=0,\dots,m_i-1$.
\item 
If $g=\prod_{i=1}^l(x-\xi_i)^{m_i}$, then the
 apolar invariant $\tv{f,g}_m=0$. (Note that $g$ is a poynomial of degree $m$.)
  \end{romenumerate}
\end{theorem}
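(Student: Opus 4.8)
\emph{Setup.} Let $\tilde f$ be the degree-$n$ binary form attached to $f$ (\refS{Spol}), let $U$ be the underlying $2$-dimensional space, so that the space $W_r$ of binary forms of degree $r$ is $S^rU^{*}$ with dual $(W_r)^{*}=S^rU$, and let $\ell_i$ be the linear form $x-\xi_i y$ (or $y$, when $\xi_i=\infty$). Then the form attached to $g$ is $\tilde g=\prod_{i=1}^l\ell_i^{m_i}$, of degree $m$. By \refE{Eapolarcov}, $\tv{f,g}_m=m!\,\tilde g(-\pd_2,\pd_1)\tilde f$, so (ii) is equivalent to $D\tilde f=0$, where $D:=\tilde g(-\pd_2,\pd_1)=\prod_i D_i^{m_i}$ with $D_i:=\ell_i(-\pd_2,\pd_1)$, a product of pairwise commuting constant-coefficient operators. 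A one-line computation gives $D_i\ell_i=0$; since $D_i\ne0$, picking a linear form $z_i$ with $D_iz_i\ne0$ makes $(z_i,\ell_i)$ a basis of $W_1$ in which $D_i$ is a nonzero scalar times $\pd/\pd z_i$. Two consequences: (a) $D_i$ maps $W_j$ \emph{onto} $W_{j-1}$ for every $j\ge1$; (b) on $W_n$, $\ker D_i^{\,k}$ is precisely the set of forms divisible by $\ell_i^{\,n-k+1}$. Let $V_i\subseteq W_n$ be the forms divisible by $\ell_i^{\,n-m_i+1}$, so $V_i=\ell_i^{\,n-m_i+1}W_{m_i-1}$ and $\dim V_i=m_i$; the forms attached to $(x-\xi_i)^{n-j}$, $0\le j\le m_i-1$, are $y^{\,j}(x-\xi_i y)^{n-j}$ (resp.\ $x^{\,j}y^{\,n-j}$ if $\xi_i=\infty$) and span exactly $V_i$. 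Thus (i) says precisely that $\tilde f\in V:=\sum_i V_i$.

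\emph{Proof of (i)$\Rightarrow$(ii).} Each $V_i$ is killed by $D_i^{m_i}$ by (b), hence by $D$ (which has $D_i^{m_i}$ as a commuting factor), so $V\subseteq\ker D$; if $\tilde f\in V$ then $D\tilde f=0$, i.e.\ $\tv{f,g}_m=0$.

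\emph{Proof of (ii)$\Rightarrow$(i).} By (a), $D$ is a composition of surjections passing only through degrees $n,n-1,\dots,n-m\ge0$, hence surjective, so $\dim\ker D=(n+1)-(n-m+1)=m$. Since $V\subseteq\ker D$, it is enough to show $\dim V=m$: then $V=\ker D$ and (ii) forces $\tilde f\in V$, which is (i). For the dimension, pass to the apolar pairing, under which multiplication by a linear form $\ell\in W_1$ is adjoint to the constant-coefficient directional derivative $\pd_\ell$ on $S^{\bullet}U$ (standard; cf.\ \cite{KR}). As $V_i=\ell_i^{\,n-m_i+1}W_{m_i-1}$ is the image of a multiplication map, $\mathrm{Ann}(V_i)=\ker\bigl(\pd_{\ell_i}^{\,n-m_i+1}\bigr)\subseteq S^nU$, which by the argument of (b) run inside $S^{\bullet}U$ equals $w_i^{m_i}S^{\,n-m_i}U$, where $w_i$ spans $\ker\pd_{\ell_i}$ in $S^1U$. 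Distinct $\xi_i$ give pairwise non-proportional $w_i$, and $S^{\bullet}U$ is a polynomial ring in two variables; since $m=\sum_i m_i\le n$, unique factorization yields $\mathrm{Ann}(V)=\bigcap_i w_i^{m_i}S^{\,n-m_i}U=\bigl(\prod_i w_i^{m_i}\bigr)S^{\,n-m}U$, of dimension $n-m+1$. Hence $\dim V=m$, as required.

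\emph{Where the work is.} Everything is routine once the three pictures — polynomials, binary forms, and $S^{\bullet}U$ under the apolar pairing — are kept apart, along with the harmless scalar factors and the root-at-infinity case. The single substantive point is $\dim V=m$, i.e.\ linear independence of the $(x-\xi_i)^{n-j}$; the apolarity/unique-factorization computation above is the clean route (a direct confluent-Vandermonde argument also works but is messier).
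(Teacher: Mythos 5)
The paper gives no proof of this theorem; it only points to Kung and Rota \cite{KR}. Your argument is correct and is essentially the standard apolarity proof from that source: you identify condition (i) with membership of $\tilde f$ in $V=\sum_i V_i$, show $V\subseteq\ker D$ for the surjective operator $D=\tilde g(-\partial_2,\partial_1)\colon W_n\to W_{n-m}$ (which is condition (ii)), and then force $V=\ker D$ by the dimension count $\dim V=m$, obtained cleanly from unique factorization via $\mathrm{Ann}(V)=\bigl(\prod_i w_i^{m_i}\bigr)S^{n-m}U$. The one point worth flagging is that $\dim V=m$ (and the theorem itself) requires the $\xi_i$ to be pairwise distinct; you use this where you assert the $w_i$ are pairwise non-proportional, and while distinctness is implicit in the statement, it deserves an explicit word.
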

If $\xi_i=\infty$, we interpret $(x-\infty)^{n-j}$ as $x^j$ in (i), as
above, and $(x-\infty)^{m_j}$ as $1$ in (ii).

\section{Invariants of polynomials of degree 1}\label{S1}

All seminvariants of a linear polynomial $a_0x+a_1$ are of the form
$c\,a_0^w$. In other words, \set{a_0} is a basis for the seminvariants. 

There are no invariants (except constants).
(The discriminant is trivially~$1$.)

\section{Invariants of polynomials of degree 2}\label{S2}

 We consider invariants \etc{} of a polynomial $f(x)=a_0x^2+a_1x+a_2$ of
 degree~2
(a \emph{quadratic} polynomial).

\subsection{Invariants}
The discriminant is, as is well-known and easily verified,
\begin{equation}\label{discr2}
  \gD(f)= a_1^2 -4a_0a_2.
\end{equation}
The discriminant is an invariant of 
degree $\nu=2$ and weight $w=2$.

The reduced form of $f$ is
\begin{equation}
\red f(x)\=
  f(x-a_1/2a_0)=a_0x^2-\frac{a_1^2 -4a_0a_2}{4a_0}
=a_0x^2-\frac{\gD}{4a_0};
\end{equation}
hence the only non-trivial coefficient of the reduced form is 
$\red a_2\=-\gD/4a_0$, so $a_0\red a_2$ is the invariant $-\gD/4$ of degree
$\nu=2$ and weight $w=2$, \cf{} \refT{Tred}.

The apolar invariant of $f$, see \refE{Eapolar1}, is
\begin{equation}\label{A2}
  A(f,f)=4a_0a_2-a_1^2=-\gD.
\end{equation}
This is another invariant of degree and weight $\nu=w=2$.

The Hankel determinant (catalecticant) of $f$, see \refE{EHankel}, is 
\begin{equation}\label{han2}
\Han(f)=
\begin{vmatrix}
  \ha_0&\ha_1\\\ha_1&\ha_2
\end{vmatrix}
=
\begin{vmatrix}
  a_0&\frac12a_1\\ \frac12 a_1&a_2
\end{vmatrix}
=a_0a_2-\frac14{a_1^2}=-\frac14\gD.
\end{equation}
Again, this is an 
invariant of degree $\nu=2$ and weight $w=2$.

The Hessian covariant is by \refE{EHessian} a covariant of order $2(\nn-2)=0$
for $\nn=2$, \ie, an  invariant. Thus, $H_0=H$.
We have, using \refE{EHessianp},
\begin{equation}\label{HD2}
  H(f)=H(f;x)
=4a_0(a_0x^2+a_1x+a_2)-(2a_0x+a_1)^2
=4a_0a_2-a_1^2
=-\gD.
\end{equation}
Once again, this is an invariant of degree $\nu=2$ and weight $w=2$.

Of course, these invariants are multiples of each other. In fact, as said
in \refE{Eapolar1} for any $\nn$, there is no other  invariants of
degree 2.
Moreover, $\gD$ is a basis for the invariants, \ie, every invariant is
$c\gD^\ell$ for some $c$ and $\ell$ 
\cite[\Satze 1.9 and 2.8]{Schur}.

\subsection{Seminvariants and covariants}
The leading coefficient $a_0$ is a seminvariant of degree 1 and weight 0.
It is the source of the covariant $f(x)$ of degree 1, order 2 and weight 0,
see Examples \refand{Ef}{Efs}.

\begin{theorem}[{\cite[Satz 2.16]{Schur}}]
  The covariants 
$f$ and $\gD$ form a basis of the covariants; thus \set{a_0,\gD} is a basis
of the seminvariants 
\cite[Satz 2.16]{Schur}.
\end{theorem}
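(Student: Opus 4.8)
The plan is to establish the seminvariant statement first and transfer it to covariants through \refT{Tcov}. Taking the source gives a degree-, order-, and weight-preserving bijection between covariants and seminvariants; it is multiplicative (the source of a product is the product of the sources), it sends $f$ to $a_0$ (\refE{Efs}), and it fixes $\gD$, which is a covariant of order $0$ equal to its own source. So it suffices to show every seminvariant of a binary quadratic is a polynomial in $a_0$ and $\gD$; lifting such an identity back through the correspondence then shows every covariant is the corresponding polynomial in $f$ and $\gD$.

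First I would fix a seminvariant $\Phi$ of degree $\nu$, weight $w$, order $\mm$. By \eqref{cweight} with $\nn=2$, $2\nu=\mm+2w$, so $\mm=2(\nu-w)\ge0$ and in particular $\nu\ge w$. This is precisely the condition under which \refT{Tredall} gives a genuine polynomial representation, namely $\Phi=a_0^{\nu-w}G\bigpar{a_0\ar_2}$ for some isobaric polynomial $G$ of weight $w$ in the single variable $a_0\ar_2$ (there is only the index $i=2$ in the tuple $(a_0^{i-1}\ar_i)_{i=2}^{\nn}$ when $\nn=2$).

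Next, since $a_0\ar_2$ is homogeneous of degree $2$ and isobaric of weight $2$ by \refT{Tred}, an isobaric polynomial of weight $w$ in it must vanish unless $w$ is even, say $w=2\ell$, in which case $G\bigpar{a_0\ar_2}=c(a_0\ar_2)^{\ell}$ for a constant $c$. Then I would identify $a_0\ar_2$ by the one-line computation $a_0\ar_2=a_0a_2-a_1^2/4=-\tfrac14(a_1^2-4a_0a_2)=-\gD/4$ (this is the reduced-form calculation already made in this section; cf.\ also \refE{Ered1} and $H_0=-\gD$ from \refE{EHessians} and \eqref{HD2}). Hence
\begin{equation*}
  \Phi=c\,a_0^{\nu-w}\bigpar{a_0\ar_2}^{\ell}=c(-1/4)^{\ell}\,a_0^{\nu-w}\gD^{w/2},
\end{equation*}
a single monomial in $a_0$ and $\gD$ (and $\Phi=0$ when $w$ is odd). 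Thus $\set{a_0,\gD}$ is a basis of the seminvariants. Transporting back, the source $a_0^{\nu-w}\gD^{w/2}$ is the source of the covariant $f^{\nu-w}\gD^{w/2}$ (degree $\nu$, order $2(\nu-w)=\mm$, weight $w$ all match), so by the uniqueness in \refT{Tcov} any covariant with this source equals $c\,f^{\nu-w}\gD^{w/2}$; hence $\set{f,\gD}$ is a basis of the covariants.

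I do not foresee a genuine obstacle; the only points needing a little care are that $\nu\ge w$ always holds for quadratics, so \refT{Tredall} yields a polynomial (not merely rational) expression, and the trivial identity $a_0\ar_2=-\gD/4$. If one prefers to avoid \refT{Tredall}, one can argue by dimension instead: \refC{CGauss} gives $N(2,\nu,w)=[q^w](1-q^2)\qw$, which equals $1$ for even $w$ and $0$ for odd $w$, and the explicit seminvariant $a_0^{\nu-w}\gD^{w/2}$ is nonzero (it contains the monomial $a_0^{\nu-w}a_1^{w}$), hence spans this one-dimensional space whenever $w$ is even.
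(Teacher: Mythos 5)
The paper does not actually prove this statement; it is quoted from Schur [Satz 2.16] without argument, so there is no in-paper proof to compare against. Your derivation is a correct, self-contained proof from the paper's own machinery. The key observations all check out: for $\nn=2$, \eqref{cweight} gives $\mm=2(\nu-w)\ge0$, so $\nu\ge w$ holds automatically and \refT{Tredall} applies in its polynomial (not merely rational) form; the tuple $(a_0^{i-1}\ar_i)_{i=2}^{\nn}$ collapses to the single generator $a_0\ar_2$ of degree and weight $2$, so isobaricity of weight $w$ forces $G$ to be the monomial $c(a_0\ar_2)^{w/2}$ (and $0$ for odd $w$); and $a_0\ar_2=-\gD/4$ is exactly the reduced-form computation the paper records in \refS{S2}. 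The transfer back to covariants via \refT{Tcov} is also sound, since the source map $\Psi\mapsto\Psi(f;1,0)$ is multiplicative, sends $f\mapsto a_0$, fixes the order-$0$ covariant $\gD$, and is injective. Your alternative via \refC{CGauss} is equally valid and arguably tighter: $N(2,\nu,w)=[q^w](1-q^2)\qw\in\set{0,1}$, and exhibiting the nonzero seminvariant $a_0^{\nu-w}\gD^{w/2}$ (which contains the monomial $a_0^{\nu-w}a_1^w$) pins down the one-dimensional space. The first route is more elementary in that it avoids the Cayley--Sylvester count, which the paper states without proof; the second is shorter but leans on that unproved dimension formula. Either way the conclusion — every seminvariant is, up to a constant, $a_0^{\nu-w}\gD^{w/2}$ with $w$ even — matches the remark the paper makes immediately after the theorem.
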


Hence, the only seminvariant of degree $\nu$ and weight $w$ (up to constant
factors) is $a_0^{\nu-w}\gD^{w/2}$ provided $w$ is even and $\nu\ge w$; there
are no seminvariants for other $\nu$ and $w$.

As a further example,
the only non-trivial Gundelfinger covariant, see \refE{EGund}, is the invariant
$G_1(f)=H(f)=-\gD$; by \eqref{gund=hankel} we also have $G_1(f)=4\Han(f)$ in
accordance with \eqref{han2}.

\subsection{Seminvariants of $f'$}\label{SS2f'}

Since the only seminvariant of a linear function is the leading coefficient
$a_0$, the only seminvariant of $f'(x)=2a_0x+a_1$ is $2a_0$.

\subsection{The case $a_0=0$}\label{SS20}
When $a_0=0$, \ie, considering the restriction to polynomials of degree 1,
the essentially only non-trivial formula is $\gD(a_1x+a_2)=a_1^2$, or,
equivalently, 
\begin{equation}
  \gD\dgx2(a\dgxx01 x+a\dgxx11)=a\dgxx01^2,
\end{equation}
\cf{} \refE{Ediskr-1}.
In particular, for $f\in\cP_2$,
\begin{equation}
  \gD\dgx2(f')=4a_0^2.
\end{equation}

\subsection{Seminvariants and roots}\label{SS2roots}
By \refE{DD}, 
\begin{equation}
  \gD=a_0^2(\xi_1-\xi_2)^2.
\end{equation}
This agrees with \eqref{Hroots}, since $H_0=H=-\gD$ by \eqref{HD2}.

The general seminvariant of degree $\nu$ and weight $w$ is thus
\begin{equation}
  a_0^{\nu-w}\gD^{w/2}=a_0^{\nu}(\xi_1-\xi_2)^w,
\end{equation}
for $\nu\ge w$ and $w$ even (otherwise there is no such invariant).

\subsection{Further examples}\label{SS2further}

As examples of invariants of higher degree, we compute the basic invariants
(covariants, seminvariants) for $\nn=4$ (see \refS{S4} below) of $f^2$; these
are clearly invariants (\etc) of 
$f$ by \refT{Tcov2}:
\begin{align}
    A(f^2,f^2)&=4\gD^2,
\\
    I(f^2)&=\gD^2,
\\
    J(f^2)&=-2\gD^3,
\\
    \gD(f^2)&=0,
\\
  H_0(f^2)&=-12\,a_0^2\,\gD.
\\
  P(f^2)&=-4\,a_0^2\,\gD. \label{Pf22}
\\
  Q(f^2)&=0,
\\
  H(f^2)&=-12\,\gD\,f^2,
\\
  \JH(f^2)&=0.
\end{align}

\section{Invariants of polynomials of degree 3}\label{S3}

We consider invariants \etc{} of a polynomial $f(x)=a_0x^3+a_1x^2+a_2x+a_3$ of
degree 3 (a \emph{cubic} polynomial).

We give a table of covariants of low degree in \refT{T3c}, and the
corresponding seminvariants in \refT{T3s}, using notation introduced below.
(The tables give bases; further examples may be constructed by taking linear
combinations of the covariants (seminvariants) in each entry.)
It is easily checked that the dimensions agree with \refT{TGauss} (using for
example \cite[Table 14.3]{Andrews}).
The invariants have $w=3\nu/2$; these are all powers of $\gD$, and the only
example in the tables is $\gD$.

\begin{table}
\newcommand\x{\\[1pt]\hline}
  \begin{tabular}{l|l|l|l|l|l|l|l|l|l|l|}
	&0&1&2&3&4&5&6&7&8&9 \\
\hline
1&$a_0$&&&&&&&&& \x
2&$a_0^2$&&$P$&&& &&&&\x
3& $a_0^3$ && $a_0P$ & $Q$ &&&&&&\x
4& $a_0^4$ && $a_0^2P$ & $a_0Q$ & $P^2$ && $\gD$ &&&\x
5& $a_0^5$ && $a_0^3P$ & $a_0^2Q$ & $a_0P^2$ & $PQ$ & $a_0\gD$ &&&\x
6& $a_0^6$& & $a_0^4P$ & $a_0^3Q$& $a_0^2P^2$& $a_0PQ$ & $a_0^2\gD,P^3;Q^2$ &
& $\gD P$ & \x
7& $a_0^7$& & $a_0^5P$ & $a_0^4Q$& $a_0^3P^2$& $a_0^2PQ$ & 
$a_0^3\gD,a_0P^3;a_0Q^2$ & $P^2Q$ & $a_0\gD P$ & $\gD Q$ \x
  \end{tabular}
\caption{Invariants and seminvariants of low degree
of cubic polynomials. 
Each entry gives either a basis for the linear space of seminvariants of
given degree (row) and weight (column), or a basis separated by a semicolon
from further examples of such seminvariants.}
\label{T3c}
\end{table}

\begin{table}
\newcommand\x{\\[1pt]\hline}
  \begin{tabular}{l|l|l|l|l|l|l|l|l|l|l|}
	&0&1&2&3&4&5&6&7&8&9 \\
\hline
1&$f$&&&&&&&&& \x
2&$f^2$&&$H$&&& &&&&\x
3& $f^3$ && $fH$ & $G$ &&&&&&\x
4& $f^4$ && $f^2H$ & $fG$ & $H^2$ && $\gD$ &&&\x
5& $f^5$ && $f^3H$ & $f^2G$ & $fH^2$ & $HG$ & $f\gD$ &&&\x
6& $f^6$& & $f^4H$ & $f^3G$& $f^2H^2$& $fHG$ & $f^2\gD,H^3;G^2$ &
& $\gD H$ & \x
7& $f^7$& & $f^5H$ & $f^4G$& $f^3H^2$& $f^2HG$ & 
$f^3\gD,fH^3;fG^2$ & $H^2G$ & $f\gD H$ & $\gD G$\x
  \end{tabular}
\caption{Invariants and covariants of low degree
of cubic polynomials. 
Each entry gives either a basis for the linear space of covariants of
given degree (row) and weight (column), or a basis separated by a semicolon
from further examples of such covariants.}
\label{T3s}
\end{table}

\subsection{Invariants}
The discriminant is, see \eg{} \cite{SJN5},
\begin{equation}\label{discr3}
\begin{split}
  \gD(f)
&= a_1^2a_2^2-4 a_1^3a_3
-4 a_0 a_2^3 +18a_0a_1a_2a_3
-27 a_0^2a_3^2.
 \end{split}
\end{equation}
This is an invariant of degree $4$ and weight 6.

Different normalizations are sometimes used. We have
$\gD=D\scite{Schur}=-27 d\scite{Schur}=\tfrac{27}2R\scite{Glenn}$.

As for $\nn=2$,
$\gD$ is a basis for the  invariants, \ie, every
 invariant is $c\gD^\ell$ for some $c$ and $\ell$
\cite[Satz 2.8]{Schur}.

The apolar invariant $A(f,f)$ vanishes since $\nn=3$ is odd.

\subsection{Reduced form}
The reduced form of  $f$ is
\begin{equation}\label{red3}
\red f(x)
=a_0x^3+px+q
\=f\Bigpar{x-\frac{a_1}{3a_0}},
\end{equation}
which yields
\begin{align}\label{p3}
  p&\=\frac{3a_0a_2-a_1^2}{3a_0},
\\
q&\=\frac{2 a_1^3  + 27 a_0^2 a_3 - 9 a_0 a_1 a_2}{27a_0^2}.
\label{q3}
\end{align}

In terms of the coefficients of the reduced polynomial $\red
f(x)=a_0x^3+px+q$,
the discriminant is given by
\begin{equation}\label{gdpq}
\gD(f)=\gD(\red f)
= -4 a_0p^3-27a_0^2q^2
\end{equation}

\subsection{Seminvariants}
The coefficients $p$ and $q$ in \eqref{red3} are rational seminvariants by
\refT{Tred}. 
We conventionally denote the numerators in \eqref{p3} and \eqref{q3} by $P$
and $Q$ and have thus the seminvariants
\begin{align}\label{P3}
  P&\=3a_0a_2-a_1^2,
\\
Q&\=2 a_1^3  + 27 a_0^2 a_3 - 9 a_0 a_1 a_2.
\label{Q3}
\end{align}
$P$ has degree $2$ and weight $2$; $Q$ has degree 3 and weight 3.
Conversely, we have
\begin{align}\label{p3P}
  p&=\frac{P}{3\,a_0},
\\
q&=\frac{Q}{27\,a_0^2}.
\label{q3Q}
\end{align}
(Other notations: 
$P=-P\scite{Cred}$, with opposite sign;
$Q=U\scite{Cred}$.)

By \eqref{gdpq} and \eqref{p3P}--\eqref{q3Q},
the discriminant is given by
\begin{equation}
\gD
=-\frac{4 P^3}{27\,a_0^2}-\frac{Q^2}{27\,a_0^2}.
\end{equation}
Hence, the relation 
(\emph{syzygy}) 
\begin{equation}\label{gDPQ} 
  27\,a_0^2\,\gD = -4P^3-Q^2.
\end{equation}

\subsection{Covariants}
The form $f$ itself is a covariant of degree 1, weight 0 and order 3, see
\refE{Ef}. 

The Hessian covariant is the polynomial of degree $2(\nn-2)=2$ given by
\eqref{hessianp}, which yields
\begin{equation}\label{H3}
H(f;x)=
\left( 12\,{a_0}\,{a_2}-4\,a_1^{2} \right) {x}^{2}
+ \left( 36\,{a_0}\,{a_3}-4\,{a_1}\,{a_2} \right) x
+12\,{a_1}\,{a_3}-4\,a_2^{2}.
\end{equation}
This is a covariant of degree 2, weight 2 and order 2.

The Hessian source $H_0$ (\refE{EHessians})
is thus the seminvariant of degree 2 and weight~2
\begin{equation}\label{HP3}
H_0=
12\,{a_0}\,{a_2}-4\,a_1^{2}
=12 a_0\, p
= 4P.
\end{equation}

Conversely, $P$ is the source of the covariant
  \begin{equation}\label{tH3}
	\begin{split}
\tH(x)&=\tH(f;x)
\=
  \frac14 H(f;x)
\\&
=
 \left( 3\,{a_0}\,{a_2}-a_1^{2} \right) {x}^{2}
+ \left( 9\,{a_0}\,{a_3}-{a_1}\,{a_2} \right) x
+3\,{a_1}\,{a_3}-a_2^{2}.
	\end{split}
  \end{equation}
(Other notations: $H(X)\scite{Cred}=-\tH(X)$, so
$H(X)\scite{Cred}$ has source $-P=P\scite{Cred}$.
Further, 
$H=18\gD\scite{Glenn}=36h\scite{Schur}$;
$\tH=6\gD\scite{Glenn}=9h\scite{Schur}$.)

The only non-trivial Gundelfinger covariant, see \refE{EGund}, is
$G_1(f)=H(f)$.

The Jacobian (see \refE{EJacobian}) of $f(x)$ and $H(f;x)$ is
\begin{equation}
  \begin{split}
J(f,H(f))
&=
 \left( 108\,a_0^{2}\,{a_3}-36\,{a_0}\,{a_1}\,{a_2}
       +8\,a_1^{3} \right) {x}^{3}
\\&\qquad
+ \left( 108\,{a_0}\,{a_1}\,{a_3}-72\,{a_0}\,a_2^{2}
        +12\,a_1^{2}\,{a_2} \right) {x}^{2}
\\&\qquad
+ \left( -108\,{a_0}\,{a_2}\,{a_3}+72\,a_1^{2}\,{a_3}
        -12\,{a_1}\,a_2^{2} \right) x
\\&\qquad
+ 36\,{a_1}\,{a_2}\,{a_3}-8\,a_2^{3}
        -108\,{a_0}\,a_3^{2}.
  \end{split}
\end{equation}
This is, by \refT{Tcov2}, a covariant, which has degree 3, order 3 and
weight 3. Its source is
\begin{equation}
108\,a_0^{2}\,{a_3}-36\,{a_0}\,{a_1}\,{a_2}+8\,a_1^{3} 
=4Q.  
\end{equation}
Conversely, the covariant corresponding to the seminvariant $Q$ is
\begin{equation}\label{G3}
  \begin{split}
G(x)\=
\frac14J(f,H(f))
&=
\left( 27\,a_0^{2}\,{a_3}-9\,{a_0}\,{a_1}\,{a_2}
  +2\,a_1^{3} \right) {x}^{3}
\\&\qquad
+ \left( 27\,{a_0}\,{a_1}\,{a_3}-18\,{a_0}\,a_2^{2}
  +3\,a_1^{2}\,{a_2} \right) {x}^{2}
\\&\qquad
+ \left( -27\,{a_0}\,{a_2}\,{a_3}+18\,a_1^{2}\,{a_3}
  -3\,{a_1}\,a_2^{2} \right) x
\\&\qquad
+9\,{a_1}\,{a_2}\,{a_3}-2\,a_2^{3}-27\,{a_0}\,a_3^{2}
  \end{split}
\end{equation}
(Other notations: $G(x)=G(x)\scite{Cred}=27\,Q\scite{Glenn}=27\,
j\scite{Schur}$; $T\scite{KR}=J(H(f),f)=-4G$.) 

The relation \eqref{gDPQ} corresponds to the similar relation (syzygy)
between the corresponding covariants
\begin{equation}
    27f(x)^2\gD = -4(H(f;x)/4)^3-G(x)^2
\end{equation}
or
\begin{equation}\label{syz3}
    432\,\gD\,f(x)^2 +H(f;x)^3+16\,G(x)^2
=
  432\,\gD\,f(x)^2 +H(f;x)^3+J(f,H(f))^2
=0
\end{equation}

\begin{theorem}[{\cite[Satz 2.24]{Schur}}]
  The covariants \set{f,H,G,\gD} form a basis of all covariants for cubic
  polynomials. Equivalently,
\set{a_0,P,Q,\gD} is a basis of all seminvariants.
\end{theorem}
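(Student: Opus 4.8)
The plan is to prove the statement about seminvariants --- that every seminvariant of a cubic is a polynomial in $a_0,P,Q,\gD$ --- and then transfer it to covariants through the source correspondence of \refT{Tcov}. For the transfer, note that the ring of all covariants is a $\bbQ$-algebra (closed under the sums and products described in the text), that the source map is $\bbQ$-linear, and that it is multiplicative: passing to the associated binary forms, the source becomes the coefficient of the top power of $x_1$, and the binary form attached to a product of covariants is the product of the attached forms, so the source of a product is the product of the sources. By \refT{Tcov} the source map is moreover a bijection onto the seminvariants, hence a $\bbQ$-algebra isomorphism. Since the sources of $f$, $H$, $G$, $\gD$ are $a_0$, $4P$, $Q$, $\gD$ (see \refE{Efs}, \eqref{HP3}, \eqref{G3} and \refE{DD}), which generate the same subalgebra as $a_0,P,Q,\gD$, once we know that the seminvariants equal $\bbQ[a_0,P,Q,\gD]$ it follows by applying the inverse isomorphism that the covariants equal $\bbQ[f,H,G,\gD]$, which is the assertion.

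Now let $\Phi$ be a seminvariant of degree $\nu$ and weight $w$. By \refT{Tredall}, $\Phi=a_0^{\,\nu-w}\,G(a_0\ar_2,a_0^2\ar_3)$ for some isobaric polynomial $G$ of weight $w$; since $a_0\ar_2=a_0p=P/3$ and $a_0^2\ar_3=a_0^2q=Q/27$ by \eqref{p3P}--\eqref{q3Q}, this reads $\Phi=a_0^{\,\nu-w}\widetilde G(P,Q)$ for an isobaric polynomial $\widetilde G$ of weight $w$ in $P$ (of weight $2$) and $Q$ (of weight $3$). If $\nu\ge w$ we are done. Assume $w>\nu$; then the polynomial identity $a_0^{\,w-\nu}\Phi=\widetilde G(P,Q)$ holds in $\bbQ[a_0,a_1,a_2,a_3]$, so in particular $a_0\mid\widetilde G(P,Q)$.

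The heart of the matter is the claim: \emph{if a polynomial $F$ satisfies $a_0^{\,e}F=\gD^{\,d}H(P,Q)$ for some integers $e,d\ge0$ and some polynomial $H$, then $F\in\bbQ[a_0,P,Q,\gD]$}; applied with $F=\Phi$, $e=w-\nu$, $d=0$, $H=\widetilde G$, this finishes the proof. I would prove the claim by induction on $e$, the case $e=0$ being trivial. For the inductive step reduce $a_0^{\,e}F=\gD^{\,d}H(P,Q)$ modulo $a_0$: since $\gD\equiv a_1^2a_2^2-4a_1^3a_3\not\equiv0$ and $\bbQ[a_1,a_2,a_3]$ is an integral domain, $a_0\mid H(P,Q)$; and since $P\equiv-a_1^2$ and $Q\equiv 2a_1^3\pmod{a_0}$, this forces $H(-a_1^2,2a_1^3)=0$ in $\bbQ[a_1]$. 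The substitution $\bbQ[P,Q]\to\bbQ[a_1]$, $P\mapsto-a_1^2$, $Q\mapsto2a_1^3$ (the parametrization of a cuspidal cubic) has kernel the principal ideal $(Q^2+4P^3)$ --- the induced map $\bbQ[P,Q]/(Q^2+4P^3)\to\bbQ[a_1]$ is injective, as one sees by writing a general element as $A(P)+B(P)Q$ and separating even and odd parts in $a_1$. Hence $H=(Q^2+4P^3)H_1$ for some polynomial $H_1$, and by the syzygy \eqref{gDPQ} we get $H(P,Q)=-27\,a_0^2\,\gD\,H_1(P,Q)$. Substituting back gives $a_0^{\,e-2}F=-27\,\gD^{\,d+1}H_1(P,Q)$; for $e\le2$ this already exhibits $F=-27\,a_0^{\,2-e}\,\gD^{\,d+1}H_1(P,Q)$ as a polynomial in $a_0,P,Q,\gD$, and for $e\ge3$ the inductive hypothesis applies with $(e-2,d+1)$ in place of $(e,d)$.

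The only genuinely non-formal inputs are the syzygy \eqref{gDPQ}, which is already in hand, and the elementary computation of the kernel of $P\mapsto-a_1^2$, $Q\mapsto2a_1^3$; the feature that makes the induction terminate is precisely that the syzygy supplies \emph{two} powers of $a_0$ at each step, against the single power that has to be cancelled. Everything else --- \refT{Tredall} and the fact that the source map is a graded $\bbQ$-algebra isomorphism --- is routine. An alternative, less self-contained, route would invoke Gordan's finiteness theorem and then check, degree by degree using the dimension formula of \refT{TGauss}, that $f,H,G,\gD$ already span the covariants; the plan above avoids that machinery.
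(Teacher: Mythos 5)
Your proof is correct. Note first that the paper itself gives no proof of this statement: it is quoted from Schur [Satz 2.24], so there is no in-paper argument to compare against. Your route is essentially the classical one, and all the delicate points are handled properly. The reduction to seminvariants via the source correspondence of \refT{Tcov} is legitimate (the source map is multiplicative because it is evaluation of the associated form at $(1,0)$, and since each monomial $a_0^{\alpha}P^{\beta}Q^{\gamma}\gD^{\delta}$ is itself homogeneous and isobaric, a representation of a seminvariant can always be taken with all monomials of the correct degree, weight and order, so the corresponding sum of covariant monomials has the right source and equals $\Psi$ by uniqueness). The seminvariant half correctly combines \refT{Tredall} with \eqref{p3P}--\eqref{q3Q} to get $\Phi=a_0^{\nu-w}\widetilde G(P,Q)$, and the only genuinely non-formal step --- that $\widetilde G(-a_1^2,2a_1^3)=0$ forces $(Q^2+4P^3)\mid\widetilde G$, so that the syzygy \eqref{gDPQ} trades one required power of $a_0$ for two supplied ones --- is argued correctly: the parity argument on $A(P)+B(P)Q$ does identify the kernel of the cusp parametrization as the principal ideal $(Q^2+4P^3)$, and reducing $\gD^d H(P,Q)$ modulo $a_0$ is valid because $\gD\equiv a_1^2a_2^2-4a_1^3a_3\not\equiv0$ and $\bbQ[a_1,a_2,a_3]$ is a domain. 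The induction terminates for exactly the reason you state. This is a sound, self-contained proof of a result the paper only cites.
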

The basis is not algebraically independent since we have the syzygy
\eqref{syz3}, \ie{} 
$432\gD f^2+H^3+16G^2=0$,
or, equivalently, \eqref{gDPQ}.

\subsection{Seminvariants of $f'$}\label{SS3f'}

The discriminant of the quadratic polynomial $f'$ is a seminvariant by
\refT{Tdiff}; it is given by
\begin{equation}\label{gdf'3}
  \gD\dgx2(f')=\gD\dgx2(3a_0x^2+2a_1x+a_2)=4\,a_1^2-12\,a_0\,a_2=-4P.
\end{equation}
This has, \cf{} \refR{Rdiff},
degree 2 and weight 2 as the discriminant for $\nn=2$, see \refS{S2}; 
its order is 2.

Alternatively,
by \refE{EdiffHess} and \eqref{HP3}, we have 
\begin{equation}
  H_0(f')=H_0(f)=4P.
\end{equation}
Since $\gD=-H=-H_0$ for a quadratic polynomial, see \eqref{HD2}, we obtain
\eqref{gdf'3}.

\subsection{The case $a_0=0$}\label{SS30}
When $a_0=0$, \ie, considering the restriction to polynomials of degree 2,
we have, \cf{} \refE{Ediskr-1},
\begin{align}
  \gD\dgx3(a\dgxx02 x^2+a\dgxx12 x+a\dgxx22)&=a\dgxx02^2\gD\dgx2,
\\
  P\dgx3(a\dgxx02 x^2+a\dgxx12 x+a\dgxx22)&=-a\dgxx02^2,
\\
  Q\dgx3(a\dgxx02 x^2+a\dgxx12 x+a\dgxx22)&=2\,a\dgxx02^3.
\end{align}
In particular, for $f\in\cP_3$, using \eqref{gdf'3},
\begin{align}
  \gD\dgx3(f')&=9\,a_0^2\,\gD\dgx2(f')=-36\,a_0^2\,P,
\\
  P\dgx3(f')&=-9\,a_0^2,
\\
  Q\dgx3(f')&=54\,a_0^3.
\end{align}

\subsection{Seminvariants and roots}\label{SS3roots}
By \refE{DD}, 
\begin{equation}
  \gD=a_0^4(\xi_1-\xi_2)^2(\xi_1-\xi_3)^2(\xi_2-\xi_3)^2.
\end{equation}

By \eqref{Hroots}, 
\begin{equation}\label{h0r3}
  \begin{split}
  H_0&=-2\,a_0^2\bigpar{(\xi_1-\xi_2)^2+(\xi_1-\xi_3)^2+(\xi_2-\xi_3)^2}\\
	&=
-4\,a_0^{2} 
\left( \xi_1^{2}+\xi_2^{2}+\xi_3^{2}
-{\xi_1}\,{\xi_2}-{\xi_1}\,{\xi_3}-{\xi_2}\,{\xi_3} \right), 
  \end{split}
\end{equation}
and thus, by \eqref{HP3},
\begin{equation}
  \begin{split}
  P&=-\frac{a_0^2}2\bigpar{(\xi_1-\xi_2)^2+(\xi_1-\xi_3)^2+(\xi_2-\xi_3)^2}\\
	&=
-a_0^{2} 
\left( \xi_1^{2}+\xi_2^{2}+\xi_3^{2}
-{\xi_1}\,{\xi_2}-{\xi_1}\,{\xi_3}-{\xi_2}\,{\xi_3} \right).
  \end{split}
\end{equation}

Further, by a calculation or from \eqref{q3Q} and \eqref{red3},
noting that $\fr$ has roots $\xi_i-(\xi_1+\xi_2+\xi_3)/3$,
\begin{equation}\label{qr3}
  \begin{split}
	Q&=
-a_0^{3} 
\left( 2\,{\xi_1}-{\xi_2}-{\xi_3} \right)  
\left( 2\,{\xi_2}-{\xi_1}-{\xi_3} \right)  
\left( 2\,{\xi_3}-{\xi_1}-{\xi_2} \right)  
\\&=
-a_0^{3} \bigl(
2\,\xi_1^{3}+2\,\xi_2^{3}+2\,\xi_3^{3}
-3\,{\xi_1}\xi_2^{2}-3\,{\xi_1}\xi_3^{2}
-3\,\xi_1^{2}{\xi_2}-3\,\xi_1^{2}{\xi_3}
\\&\qquad\qquad
-3\,{\xi_2}\xi_3^{2}-3\,\xi_2^{2}{\xi_3}
+12{\xi_1}{\xi_2}{\xi_3}
\bigr).
  \end{split}
\end{equation}

\subsection{Covariants and roots}
For the corresponding covariants we have first
by \refE{EHessroots}, \cf, \eqref{h0r3},
\begin{equation}
  H(f;x)=-2a_0^2\bigpar{(\xi_1-\xi_2)^2(x-\xi_3)^2 +(\xi_1-\xi_3)^2(x-\xi_2)^2
+(\xi_2-\xi_3)^2(x-\xi_1)^2}.
\end{equation}

For $G$ we use \eqref{qr3} and \refT{Tcovroots}. We have 
\begin{equation*}
  \xi_1\xi_2\xi_3(2\xi_1\qw-\xi_2\qw-\xi_3\qw)=
2\xi_2\xi_3-\xi_1\xi_3-\xi_1\xi_2
=\xi_2(\xi_3-\xi_1)+\xi_3(\xi_2-\xi_1)
\end{equation*}
which after the substitution $\xi_i\mapsto x-\xi_i$ and permutation of the
indices leads to 
{\multlinegap=0pt
\begin{multline}
G(f;x)=-a_0^3	
\Bigpar{(x-\xi_2)(\xi_1-\xi_3)+(x-\xi_3)(\xi_1-\xi_2)}\
\\
\cdot
\Bigpar{(x-\xi_1)(\xi_2-\xi_3)+(x-\xi_3)(\xi_2-\xi_1)}
\Bigpar{(x-\xi_1)(\xi_3-\xi_2)+(x-\xi_2)(\xi_3-\xi_1)}
.
\end{multline}
}

\subsection{Further examples}\label{SS3further}
The apolar invariant of the Hessian covariant is an invariant 
given by, see \eqref{H3} and \eqref{A2},
\begin{equation}\label{AHH3}
  \begin{split}
  A(H(f),H(f))&
=
4\, \left( 12\,{a_0}\,{a_2}-4\,a_1^{2} \right)  
\left( 12\,{a_1}\,{a_3}-4\,a_2^{2} \right) 
- \left( 36\,{a_0}\,{a_3}-4\,{a_1}\,{a_2} \right) ^{2}
\\
&=
-1296\,a_0^{2}\,a_3^{2}
+864\,{a_0}\,{a_1}\,{a_2}\,{a_3}
-192\,{a_0}\,a_2^{3}
-192\,a_1^{3}\,{a_3}
+48\,a_1^{2}\,a_2^{2}
\\&= 48\, \gD.
  \end{split}
\raisetag{\baselineskip}
\end{equation}
This has degree 4 and weight 6.

The apolar invariant of the 6th degree polynomial $f^2$ is
\begin{equation}
  \begin{split}
  A(f^2,f^2)
&=
1296\,a_0^{2}\,a_3^{2}
-864\,{a_0}\,{a_1}\,{a_2}\,{a_3}
+192\,{a_0}\,a_2^{3}
+192\,a_1^{3}\,{a_3}
-48\,a_1^{2}\,a_2^{2}
\\&
=-48\,\gD.
  \end{split}
\raisetag\baselineskip
\end{equation}

Similarly, 
the apolar invariant of the 12th degree polynomial $f^4$ is
\begin{equation}
  \begin{split}
  A(f^4,f^4)
&=1244160\,\gD^2
=
2^{10}\cdot3^5\cdot5\cdot
\gD^2.
  \end{split}
\end{equation}

Recall that every invariant is a constant times a power of $\gD$, so these
formulas are no surprises.

The discriminant of the quadratic covariant $H(x)$ is
\begin{equation}
  \begin{split}
\gD\dgx2(H(x))
&=
1296\,a_0^{2}\,a_3^{2}
-864\,{a_0}\,{a_1}\,{a_2}\,{a_3}
+192\,{a_0}\,a_2^{3}
+192\,a_1^{3}\,{a_3}
-48\,a_1^{2}\,a_2^{2}
\\&=
-48\gD,
  \end{split}
\raisetag\baselineskip
\end{equation}
\cf{} \eqref{A2} and \eqref{AHH3}.
Thus the covariant $\tH(x)$ in \eqref{tH3} corresponding to $P$ 
has discriminant $-3\gD$.

The discriminant and covariants $H$ and $G$ of the cubic covariant $G(x)$ are
\begin{align}
\gD(G(x))&=729\,\gD^3, \label{DG3}
\\
H(G(x))&=27\,\gD\,H(x),
\\
G(G(x))&=-729\,\gD^2\,f(x).
\end{align}

We calculate also the resultants of $f$, $H(f)$ and $G(f)$:
\begin{align}
  R\xpar{f,H}&=-64\gD^2, \\
  R\xpar{f,G}&=8\gD^3, \\
  R\xpar{H,G}&=-1728\gD^3, 
\end{align}
where the first also follows by \refE{EResHess}.

For the seminvariants in Examples \ref{Eredb}--\ref{Eredc}, we have,
recalling $\ar_2=p=P/3a_0$, $\ar_3=q=Q/27a_0^2$ and $\ar_4=0$, see
\eqref{red3} and \eqref{p3P}--\eqref{q3Q},
\begin{align}
  a_0^2S_2&=-\frac23 P 
, \label{s32}\\
  a_0^3S_3&=-\frac19Q,\label{s33}\\
  a_0^4S_4&=\frac29 P^2,\label{s34}
\intertext{and}
 a_0^2 \chi_2&=-\frac29 P,
\label{achi32}\\
a_0^3 \chi_3&=-\frac1{27}Q,
\label{achi33}\\
a_0^4  \chi_4&
=-\frac{2}{27} P^2.
\label{achi34}
\end{align}

\subsection{Vanishing invariants and covariants}\label{SS3=0}
\begin{theorem} Let $f$ be a polynomial of degree $3$.
  \begin{romenumerate}
  \item $\gD(f)=0$ if and only if $f$ has a double (or triple) root; \ie, if
	and only if it has a square factor.
  \item $H(f)=0$ if and only if $f$ has a triple root, \ie, if and only if
	$f(x)=c(x-x_0)^3$. 
  \item $G(f)=0$ if and only if $f$ has a triple root, \ie, if and only if
	$f(x)=c(x-x_0)^3$. 
  \end{romenumerate}
\end{theorem}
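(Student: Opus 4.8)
The plan is to read off all three equivalences from general results proved earlier, with the only real argument needed for part~(iii).

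For part~(i) I would quote \refT{TD=0}: $\gD(f)=0$ exactly when $f$ has a double root in some extension of the ground field; for a cubic this is the same as $f$ having a square factor $(ax+b)^2$ (the third linear factor being unconstrained, so the double root may even be triple), which is the stated reformulation. For part~(ii) I would quote \refT{THess=0}: $H(f)=0$ exactly when $\xi_1=\dots=\xi_n$, i.e.\ $f(x)=a_0(x-\xi)^n$; with $n=3$ this is precisely the triple-root statement. Both are immediate.

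Part~(iii) is where there is something to do. For the "if" direction, if $f=c(x-x_0)^3$ then $H(f)=0$ by part~(ii), hence $G=\tfrac14 J(f,H(f))=\tfrac14 J(f,0)=0$, using that by \eqref{G3} the covariant $G$ is, up to the factor $\tfrac14$, the Jacobian of $f$ and $H(f)$. For the "only if" direction I would use the syzygy \eqref{syz3}, which with $G=0$ becomes $H(f;x)^3=-432\,\gD\,f(x)^2$. If $\gD=0$ this forces $H(f)=0$, hence a triple root by part~(ii). If $\gD\neq0$, then $f$ is a cubic with three distinct roots by part~(i), so $-432\,\gD\,f(x)^2$ is a nonzero polynomial of degree $6$ with exactly three distinct roots; on the other hand $H(f;x)^3$ is a nonzero perfect cube, necessarily of degree $6$ and hence a constant times the cube of a quadratic, so it has at most two distinct roots --- a contradiction. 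Therefore $\gD=0$, and we are in the previous case. An alternative for this direction: $G=0$ means $J(f,H(f))=0$, so if $H\not\equiv0$ then \refT{TJac=0} applied to $f$ (nominal degree $3$) and $H$ (nominal degree $2$) gives $f=a_0h^{d_1}$, $H=b_0h^{d_2}$ with $\deg h$ dividing $\gcd(3,2)=1$, forcing $\deg h=1$ and $d_1=3$; and $H\equiv0$ again yields a triple root by part~(ii).

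The only point that needs a word of care --- the "main obstacle", minor as it is --- is the bookkeeping around conventions: to talk about the roots $\xi_1,\xi_2,\xi_3$ one passes to an extension field, and the degenerate case $a_0=0$ (actual degree below $3$) is covered by the projective viewpoint with roots at infinity, exactly as in \refE{Ediskr-1}; once that is understood, no extra case analysis is required. I would present the syzygy argument as the main line and keep the Jacobian argument as a remark, since the former stays entirely inside $K[x]$ and sidesteps any discussion of the actual versus nominal degree of the Hessian.
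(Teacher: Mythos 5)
Your proof is correct; parts (i) and (ii) are quoted from Theorems \ref{TD=0} and \ref{THess=0} exactly as in the paper, and the ``if'' direction of (iii) is handled the same way. For the ``only if'' direction of (iii) your route genuinely differs: the paper first deduces $\gD=0$ from $G=0$ via the computed identity $\gD(G(x))=729\,\gD^3$ in \eqref{DG3}, and then either evaluates $G$ explicitly on $f=a_0x^3+a_1x^2$ using \eqref{G3}, or (as a parenthetical alternative) feeds $G=\gD=0$ into the syzygy \eqref{syz3} to obtain $H=0$ and invokes (ii). You instead work entirely from the syzygy: with $G=0$ it reads $H(f;x)^3=-432\,\gD\,f(x)^2$, and your perfect-cube versus three-distinct-roots count rules out $\gD\neq0$ without ever appealing to \eqref{DG3}; then $H=0$ and part (ii) finish the job. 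This buys a proof that needs only the syzygy and no further explicit computation (neither \eqref{DG3} nor the coefficients of $G$), at the small cost of a root-multiplicity argument over a splitting field. Your Jacobian alternative, writing $G=\tfrac14 J(f,H(f))$ and applying \refT{TJac=0}, is likewise sound and closely mirrors the paper's own proof of \refT{THess=0}. Both of your variants, like the paper's argument, dispose of the case $a_0=0$ by projective invariance, which you flag appropriately.
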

\begin{proof}
  Parts (i) and (ii) are Theorems \ref{TD=0} and \ref{THess=0}.
For (iii), suppose that $G=0$. By \eqref{DG3}, then $\gD=0$, so $f$ has a
double root $\xi$. By projective invariance, we may assume that $\xi=0$, so
$f(x)=a_0x^3+a_1x^2$. Then, by \eqref{G3}, $G(x)=2a_1^3\,x^3$, and thus
$a_1=0$ too, and $\xi=0$ is a triple root. 
(Alternatively, $G=0$ and $\gD=0$ imply $H=0$ by \eqref{syz3}, and we may
use (ii).)

The converse follows similarly from
\eqref{G3} and projective invariance, or from \eqref{syz3} and (i)+(ii).
\end{proof}

\subsection{Geometry of real cubics}\label{SS3geo}

Let $f$ be a real cubic, with $a_0\neq0$. Then $f$ has an inflection point
$(x_0,y_0)$ given by $0=f''(x_0)=6a_0 x_0+2a_1$, so $x_0=-a_1/3a_0$ and, using
\eqref{red3},
\begin{equation}\label{g3y0}
  y_0=f(x_0)=\red f(0)=q.
\end{equation}
Thus, by \eqref{q3Q},
\begin{equation}\label{g3inf}
  (x_0,y_0)
=\Bigpar{-\frac{a_1}{3a_0},\,q}
=\Bigpar{-\frac{a_1}{3a_0},\,\frac{Q}{27\,a_0^2}}.
\end{equation}
Note that $f$ is symmetric about $(x_0,y_0)$, \cf{} \eqref{red3}.

The extreme points $x_\pm$ are given by, using \eqref{red3} again,
\begin{equation}\label{g3ext}
  0=f'(x)={\red{f}}'(x-x_0)=3a_0(x-x_0)^2+p;
\end{equation}
hence, using also \eqref{p3P}, 
\begin{equation}
  \label{g3+-}
x_\pm=x_0\pm\sqrt{\frac{-p}{3a_0}}
=x_0\pm \frac{\sqrt{-P}}{3a_0}
= \frac{-a_1\pm\sqrt{-P}}{3a_0}.
\end{equation}
Consequently, $f$ has real (local) maximum and minimum points if $P<0$, but not
if $P\ge0$; in the latter case, $f$ is monotonously increasing (if $a_0>0$)
or decreasing (if $a_0<0$) on $(-\infty,\infty)$ .
(This includes the case $P=0$, when 
$f'(x_0)=f''(x_0)=0$.)

Moreover, the extreme values $y_\pm=f(x_\pm)$ are given by, using
\eqref{red3}, \eqref{g3+-} and \eqref{p3P}--\eqref{q3Q},
\begin{equation}\label{gf3+-}
  \begin{split}
y_\pm&\= f(x_\pm)=\red f(x_\pm-x_0)=a_0(x_\pm-x_0)^3+p(x_\pm-x_0)+q
\\&\phantom:
=a_0\lrpar{\frac{\pm\sqrt{-P}}{3a_0}}^3+p\,\frac{\pm\sqrt{-P}}{3a_0}+q
=\frac{\pm 2P\sqrt{-P}}{27a_0^2}+\frac{Q}{27a_0^2}	
\\&\phantom:
=\frac{Q\pm 2P\sqrt{-P}}{27a_0^2}.
  \end{split}
\end{equation}

In particular, we see that
$f$ has three distinct real roots\\\hbox{\qquad} 
$\iff$ $x_\pm$ are real and 
$y_-<0<y_+$ or $y_+<0<y_-$\\\hbox{\qquad} 
$\iff$ $P< 0$ and $|2P\sqrt{-P}| >|Q|$\\\hbox{\qquad} 
$\iff$ $-4P^3>Q^2$\\\hbox{\qquad} 
$\iff$ $\gD=-(4P^3+Q^2)/27a_0^2>0$.

Similarly, there is a real double root if $P<0$ and $\gD=0$, and a triple
root if $P=0=Q$. We thus have found the following
classical result, which also follows
directly from \eqref{dd}, see  \cite{SJN5,SJN7}:
\begin{theorem}\label{Tf3roots}
Let $f$ be a real cubic.
  \begin{romenumerate}
	\item If $\gD>0$, then $f$ has $3$ distinct real roots.
\item If $\gD=0$, then $f$ has either one double and one simple root, both
  real $(P<0)$, or a real triple root $(P=Q=0)$.
\item If $\gD<0$, then $f$ has one real root and a pair of two   (non-real) 
conjugate complex roots.
  \end{romenumerate}
\end{theorem}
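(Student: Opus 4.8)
The plan is to read all three parts off the geometric analysis just carried out, in particular the locations \eqref{g3+-} of the critical points $x_\pm$, the critical values \eqref{gf3+-} $y_\pm=(Q\pm 2P\sqrt{-P})/27a_0^2$, and the syzygy \eqref{gDPQ}, \ie{} $27a_0^2\gD=-4P^3-Q^2$. Part (i) requires nothing new: the chain of equivalences displayed immediately before the theorem already shows that $f$ has three distinct real roots exactly when $\gD>0$.

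For (iii), suppose $\gD<0$. If $f$ had a repeated root, then $\gD=0$ by \refT{TD=0}, a contradiction, so all three roots are simple; moreover they are not all real, since three distinct real roots would force $\gD>0$ by the same equivalence chain. Since $f$ has real coefficients, its non-real roots occur in conjugate pairs; having three simple roots that are not all real, $f$ must then have exactly one real root and a conjugate pair of non-real roots. (That $f$ has a real root at all is in any case immediate from $\deg f$ being odd.)

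For (ii), suppose $\gD=0$. Then \refT{TD=0} gives a repeated root, and \eqref{gDPQ} gives $Q^2=-4P^3$, so $P^3\le 0$, \ie{} $P\le 0$. If $P=0$, then also $Q=0$; by \eqref{g3ext} we get $f'(x_0)=p=P/3a_0=0$, by \eqref{g3y0} and \eqref{q3Q} we get $f(x_0)=q=Q/27a_0^2=0$, and $f''(x_0)=0$ by the definition of the inflection abscissa $x_0$, so the real number $x_0$ is a root of $f$ of multiplicity three, \ie{} $f(x)=a_0(x-x_0)^3$. If $P<0$, then by \eqref{g3+-} the points $x_\pm$ are distinct and real, while $Q^2=-4P^3=(2P\sqrt{-P})^2$ forces $Q=\pm 2P\sqrt{-P}$, so exactly one of $y_+$, $y_-$ in \eqref{gf3+-} vanishes; then $f$ has a root at the corresponding real point $\zeta\in\set{x_+,x_-}$, and since $f'(\zeta)=0$ while $f''(\zeta)=6a_0(\zeta-x_0)=\pm 2\sqrt{-P}\neq 0$, this root is a double (not a triple) root. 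Writing $f=a_0(x-\zeta)^2(x-\eta)$, the third root $\eta=-a_1/a_0-2\zeta$ is real, so $f$ has one real double root and one real simple root. As $P\le 0$, these two subcases exhaust the possibilities, giving (ii). Alternatively, all three parts drop out of the product formula \eqref{dd} for $\gD$ once one enumerates the possible real/complex splittings of the root set.

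The preparatory computations make the argument short, and I do not expect a serious obstacle. The one point needing a moment's care is the subcase $P<0$ of (ii): one must check that the vanishing critical value produces a genuine double root rather than a triple one, which is exactly what $P\neq 0$ secures through $f''(x_\pm)\neq 0$.
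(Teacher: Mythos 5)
Your proposal is correct and follows essentially the same route as the paper, which derives the theorem from the geometric analysis of \refSS{SS3geo} (the critical points \eqref{g3+-}, the critical values \eqref{gf3+-}, and the syzygy \eqref{gDPQ}), merely leaving the $\gD=0$ and $\gD<0$ cases as brief remarks that you have filled in carefully; your check that $f''(x_\pm)\neq0$ when $P<0$ correctly rules out a triple root in case (ii). The alternative you mention at the end, reading everything off the product formula \eqref{dd}, is precisely the second route the paper points to.
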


\begin{remark}\label{RD}
More generally, it follows from \eqref{dd} that if $f$ is a real polynomial
of degree $n$ with only simple roots, having $n-2m$ real roots and $m$ 
pairs of conjugate complex (non-real) roots, then $\sign(\gD(f))=(-1)^m$.
\end{remark}

We also have, by \eqref{gf3+-} and \eqref{gDPQ}, the quantitative relation
\begin{equation}\label{g3y+-}
  y_+y_-=\frac{Q^2+4P^3}{729\,a_0^4}=-\frac{\gD}{27\,a_0^2}.
\end{equation}
In fact, since $\gD=-a_0\qw R(f,f')$, where $R$ is the resultant,
this follows immediately from a standard property of the resultant; 
more generally,
for a polynomial of arbitrary degree $n$, with stationary points (roots of
$f'$) $\eta_1,\dots,\eta_{n-1}$,
\begin{equation}\label{tdd}
\gD(f)
=(-1)^{n(n-1)/2}n^na_0^{n-1}\prod_{j=1}^{n-1}f(\eta_j),
\end{equation}
see \cite{SJN5}.

Note also the corresponding formula, by \eqref{g3+-} and \eqref{P3} or
directly from $f'(x)=3a_0x^2+2a_1x+a_2$,
\begin{equation}\label{g3x+-}
  x_+x_-=\frac{a_2}{3a_0}.
\end{equation}

We can further study the location of the roots. We have for example the
following criteria for positive roots.

\begin{theorem}\label{T3>0}
  Let $f$ be a real cubic with $a_0>0$.
  \begin{romenumerate}
  \item $f$ has three distinct positive roots in $(0,\infty)$\\ 
$\iff$ $\gD>0$ (which implies $P<0$), $a_3<0$ and $-a_1>\sqrt{-P}$ \\
$\iff$ $\gD>0$, $a_1<0$, $a_2>0$, $a_3<0$.
  \item $f$ has three  roots (not necessarily distinct) 
in $[0,\infty)$ \\
$\iff$ $\gD\ge0$ (which implies $P\le0$), $a_3\le0$ and $-a_1\ge\sqrt{-P}$ \\
$\iff$ $\gD\ge0$, $a_1\le0$, $a_2\ge0$, $a_3\le0$.
  \end{romenumerate}
\end{theorem}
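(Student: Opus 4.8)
The plan is to reduce both parts to a single elementary fact about real numbers, combined with the root dictionary \refT{Tf3roots} and Vieta's formulas. The elementary fact, call it the \emph{sign lemma}, is this: for real numbers $r_1\le r_2\le r_3$ with elementary symmetric functions $e_k=e_k(r_1,r_2,r_3)$, one has $r_1,r_2,r_3\ge0$ if and only if $e_1\ge0$, $e_2\ge0$, $e_3\ge0$; and, when the $r_i$ are moreover distinct, one has $r_1,r_2,r_3>0$ if and only if $e_1,e_2,e_3>0$. The forward implications are trivial. For the converse, suppose some $r_i$ is negative, hence $r_1<0$, and evaluate $g(t)\=(t-r_1)(t-r_2)(t-r_3)=t^3-e_1t^2+e_2t-e_3$ at $t=r_1$: the four terms $r_1^3$, $-e_1r_1^2$, $e_2r_1$, $-e_3$ are all $\le0$, and $r_1^3<0$, so $g(r_1)<0$, contradicting $g(r_1)=0$; in the strict case one also notes that $r_1\le0$ already forces $e_3=r_1r_2r_3\le0$, so $e_3>0$ by itself rules out $r_1=0$.

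First I would translate the coefficient hypotheses into root hypotheses. Writing $f=a_0(x-\xi_1)(x-\xi_2)(x-\xi_3)$ with $a_0>0$, Vieta's formulas give $a_1=-a_0e_1$, $a_2=a_0e_2$, $a_3=-a_0e_3$, so (since $a_0>0$) $a_1\le0\iff e_1\ge0$, $a_2\ge0\iff e_2\ge0$, $a_3\le0\iff e_3\ge0$, and similarly with all inequalities strict. By \refT{Tf3roots}, $\gD>0$ holds exactly when $f$ has three distinct real roots and $\gD\ge0$ exactly when all three roots of $f$ are real; moreover, combining \eqref{Hroots} and \eqref{HP3} gives $P=-\tfrac12a_0^2\sum_{i<j}(\xi_i-\xi_j)^2$, so $P\le0$ whenever the roots are real (i.e.\ when $\gD\ge0$) and $P<0$ when they are moreover distinct (i.e.\ when $\gD>0$); these are the parenthetical assertions in the statement.

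Putting the sign lemma, Vieta, and \refT{Tf3roots} together proves the equivalence of ``three distinct positive roots'' with the coefficient-only criterion in (i): it equals ``$\gD>0$ and all $\xi_i>0$'', which equals ``$\gD>0$, $a_1<0$, $a_2>0$, $a_3<0$''; likewise in (ii), ``three roots in $[0,\infty)$'' equals ``$\gD\ge0$ and all (automatically real) $\xi_i\ge0$'', which equals ``$\gD\ge0$, $a_1\le0$, $a_2\ge0$, $a_3\le0$''. It remains to match the $\sqrt{-P}$ criterion: from \eqref{P3}, $-P=a_1^2-3a_0a_2$, so, still in the regime $\gD\ge0$, $a_3\le0$: if $a_1\le0$ and $a_2\ge0$ then $-a_1=\sqrt{a_1^2}\ge\sqrt{a_1^2-3a_0a_2}=\sqrt{-P}$ because $3a_0a_2\ge0$; conversely $-a_1\ge\sqrt{-P}\ge0$ forces $a_1\le0$, and squaring yields $a_1^2\ge a_1^2-3a_0a_2$, i.e.\ $a_2\ge0$. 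The identical computation with strict inequalities handles (i). Hence, given $\gD>0$ and $a_3<0$, ``$-a_1>\sqrt{-P}$'' is equivalent to ``$a_1<0$ and $a_2>0$'', and, given $\gD\ge0$ and $a_3\le0$, ``$-a_1\ge\sqrt{-P}$'' is equivalent to ``$a_1\le0$ and $a_2\ge0$'', which closes the chain of equivalences in each part.

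I expect the only point requiring care to be the bookkeeping of the degenerate cases in (ii) --- a repeated root, or a root landing exactly at $0$ --- which is why the sign lemma is stated for roots counted with multiplicity and why the boundary observation about $e_3$ is retained; everything else is routine sign-chasing.
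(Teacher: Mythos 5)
Your proof is correct, but it takes a genuinely different route from the paper's. The paper argues geometrically: assuming $\gD>0$ (so three distinct real roots and, via the syzygy \eqref{gDPQ}, $P<0$), it observes that all roots are positive if and only if the two stationary points $x_\pm=(-a_1\pm\sqrt{-P})/(3a_0)$ are positive and $f(0)=a_3<0$; the condition $x_->0$ is exactly $-a_1>\sqrt{-P}$, which yields the first displayed criterion directly, and case (ii) is dispatched as ``similar''. You instead bypass the geometry entirely: you prove the second (pure coefficient-sign) criterion first via Vieta and an elementary sign lemma on symmetric functions, and then recover the $\sqrt{-P}$ criterion by the algebraic identity $-P=a_1^2-3a_0a_2$ and a squaring argument. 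Each approach has its merits: the paper's is shorter once one accepts the ``geometric consideration'' and the formulas \eqref{g3+-} already developed in \refS{SS3geo}, while yours is more self-contained, works uniformly over the reals without picturing the graph, and — importantly — handles the degenerate configurations of part (ii) (repeated roots, a root at $0$) explicitly through the non-strict version of the sign lemma, which the paper leaves to the reader. One small remark: in the strict version of your sign lemma the distinctness hypothesis is not actually needed, since $e_3>0$ already excludes a zero root once nonnegativity is established; this does not affect your application, where $\gD>0$ supplies distinctness anyway.
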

\begin{proof}
  Consider for example (i). We may suppose that $f$ has three real roots, so
  $\gD>0$, and then $P<0$ by \eqref{gDPQ}. 
A geometric consideration shows that
the roots are all positive $\iff$ 
$x_\pm>0$ and $a_3=f(0)<0$, and the result follows by \eqref{g3+-} and
  \eqref{P3}. Case (ii) is similar, considering also cases with a double or
  triple root.
\end{proof}

Note that \eqref{sofie} immediately implies that if $\xi_1,\xi_2,\xi_3\ge0$,
and $a_0>0$, then $a_1\le0$, $a_2\ge0$, $a_3\le0$, but the converse is less
obvious. 

\section{Invariants of polynomials of degree 4}\label{S4}
We consider invariants \etc{}
of a polynomial $f(x)=a_0x^4+a_1x^3+a_2x^2+a_3x+a_4$ of
degree 4 (a \emph{quartic} polynomial).

We give a table of covariants of low degree in \refT{T4c}, and 
corresponding seminvariants in \refT{T4s}, using notation introduced below.
Again, it is easily checked that the dimensions agree with \refT{TGauss}
(using for example \cite[Table 14.3]{Andrews}).
(The examples given in the table is a rather arbitrary selection when the
dimension is $>1$. For example, note that when $\nu=w$, there is by
\refT{Tredall} always a
basis for the seminvariants
consisting of monomials in $P,Q,R$; for example, for $\nu=w=5$,
\set{P^3,Q^2,PR}.) 
The invariants have $w=2\nu$; for each such $\nu$ and $w$, there is a basis
consisting of monomials in $I$ and $J$ (but for $\nu=6$, $w=12$, \set{\gD,I^3}
is another example).

\begin{table}
\newcommand\x{\\[1pt]\hline}
  \begin{tabular}{l|l|l|l|l|l|l|l|l|l|}
	&0&1&2&3&4&5&6&7&8 \\
\hline
1&$f$&&&&&&&& \x
2&$f^2$&&$H$&&$I$& &&&\x
3& $f^3$ && $fH$ & $\JH$ & $fI$ & & $J$ &  &\x
4& $f^4$ && $f^2H$ & $f\JH$ & $f^2I, H^2$ & & $fJ,IH$ &  & $I^2$\x
5& $f^5$ && $f^3H$ & $f^2\JH$ & $f^3I, fH^2$ & $H\JH$ & $f^2J,fIH$
& $I\JH$ &  $fI^2,JH$ \x
6& $f^6$& & $f^4H$ & $f^3\JH$& $f^4I, f^2H^2$& $fH\JH$ &
$f^3J,f^2IH,H^3;\JH^2$ & $fI\JH$&$f^2I^2,fJH,IH^2$  \x
  \end{tabular}
\vskip\baselineskip
  \begin{tabular}{l|l|l|l|l|}
	&9&10&11&12 \\
\hline
5 && $IJ$ &&\x
6& $J\JH$& $fIJ,I^2H$ && $I^3,J^2;\gD$ \x
  \end{tabular}
\caption{Invariants and covariants of low degree
of quartic polynomials. 
Each entry gives either a basis for the linear space of covariants of
given degree (row) and weight (column), or a basis separated by a semicolon
from further examples of such covariants.}
\label{T4c}
\end{table}

\begin{table}
\newcommand\x{\\[1pt]\hline}
  \begin{tabular}{l|l|l|l|l|l|l|l|l|l|}
	&0&1&2&3&4&5&6&7&8 \\
\hline
1&$a_0$&&&&&&&& \x
2&$a_0^2$&&$P$&&$I$& &&&\x
3& $a_0^3$ && $a_0P$ & $Q$ & $a_0I$ & & $J$ &  &\x
4& $a_0^4$ && $a_0^2P$ & $a_0Q$ & $a_0^2I, P^2; R$ & & $a_0J,IP$ &  & $I^2$\x
5& $a_0^5$ && $a_0^3P$ & $a_0^2Q$ & $a_0^3I, a_0P^2$ & $PQ$ & $a_0^2J,a_0IP$
& $IQ$ &  $a_0I^2,JP$ \x
6& $a_0^6$& & $a_0^4P$ & $a_0^3Q$& $a_0^4I, a_0^2P^2$& $a_0PQ$ &
$a_0^3J,a_0^2IP,P^3;Q^2$ & $a_0IQ$&$a_0^2I^2,a_0JP,IP^2$  \x
  \end{tabular}
\vskip\baselineskip
  \begin{tabular}{l|l|l|l|l|}
	&9&10&11&12 \\
\hline
5 && $IJ$ &&\x
6& $JQ$& $a_0IJ,I^2P$ && $I^3,J^2;\gD$ \x
  \end{tabular}
\caption{Invariants and seminvariants of low degree
of quartic polynomials. 
Each entry gives either a basis for the linear space of seminvariants of
given degree (row) and weight (column), or a basis separated by a semicolon
from further examples of such seminvariants.}
\label{T4s}
\end{table}

\subsection{Invariants}
The discriminant is, see \cite{SJN5},
\begin{equation}\label{discr4}
\begin{split}
  \gD(f)
&= 
256\,a_0^{3}\,a_4^{3}
-192\,a_0^{2}\,{a_1}\,{a_3}\,a_4^{2}
-128\,a_0^{2}\,a_2^{2}\,a_4^{2}
\\&\qquad
+144\,a_0^{2}\,{a_2}\,a_3^{2}\,{a_4}
-27\,a_0^{2}\,a_3^{4}
+144\,{a_0}\,a_1^{2}\,{a_2}\,a_4^{2}
\\&\qquad
-6\,{a_0}\,a_1^{2}\,a_3^{2}\,{a_4}
-80\,{a_0}\,{a_1}\,a_2^{2}\,{a_3}\,{a_4}
+18\,{a_0}\,{a_1}\,{a_2}\,a_3^{3}
\\&\qquad
+16\,{a_0}\,a_2^{4}\,{a_4}
-4\,{a_0}\,a_2^{3}\,a_3^{2}
-27\,a_1^{4}\,a_4^{2}
\\&\qquad
+18\,a_1^{3}\,{a_2}\,{a_3}\,{a_4}
-4\,a_1^{3}\,a_3^{3}
-4\,a_1^{2}\,a_2^{3}\,{a_4}
+a_1^{2}\,a_2^{2}\,a_3^{2}
 .
 \end{split}
\raisetag{13pt}
\end{equation}
(See also \eqref{D4red} below.)
This is an invariant of degree 6 and weight 12.
(Other notations: 
$\gD\scite{Cred}=\gD\scite{Cclass}=27\gD$; 
$\gD_0{}\scite{Cred}=\gD$; 
$D\scite{Schur}=\gD$.)

There are simpler invariants, however.
The apolar invariant, see \refE{Eapolar1}, is 
\begin{equation}\label{A4}
A(f,f)=  
48\,{a_0}\,{a_4}-12\,{a_1}\,{a_3}+4\,a_2^{2}
=4I,
\end{equation}
where $I$ is the conveniently normalized invariant
\begin{equation}\label{I4}
I=12\,{a_0}\,{a_4}-3\,{a_1}\,{a_3}+a_2^{2}.  
\end{equation}
The apolar invariant and $I$ are invariants of degree 2 and weight 4.
(Other notations: $A = 4!\,A\scite{Schur} = 24\, A\scite{Schur}$;
$I = 6\,i\scite{Glenn}=12\, P\scite{Schur}$.)

The Hankel determinant (catalecticant), 
see \refE{EHankel}, is an invariant
of degree 3 and weight 6. It is, by a calculation, in our normalization,
\begin{equation}
  \begin{split}
\Han(f)
&=
\begin{vmatrix}
  \ha_0&\ha_1&\ha_2\\ \ha_1&\ha_2&\ha_3\\ \ha_2&\ha_3&\ha_4
\end{vmatrix}
=
\begin{vmatrix}
  a_0&\frac14a_1&\frac16a_2\\ 
  \frac14a_1&\frac16a_2&\frac14a_3\\ 
    \frac16a_2&\frac14a_3&a_4\\ 
\end{vmatrix}
\\&	=
\frac{
72\,{a_0}\,{a_2}\,{a_4}-27\,{a_0}\,a_3^{2}-27\,
a_1^{2}\,{a_4}+9\,{a_1}\,{a_2}\,{a_3}-2\,a_2^{3}
}
{432} 
\\&=
\frac{J}{432},
  \end{split}
\end{equation}
where we thus define
\begin{equation}\label{J4}
J\=
72\,{a_0}\,{a_2}\,{a_4}
-27\,{a_0}\,a_3^{2}
-27\,a_1^{2}\,{a_4}
+9\,{a_1}\,{a_2}\,{a_3}
-2\,a_2^{3}.
\end{equation}
$J$ is thus an invariant of degree 3 and weight 6.
(Other notation: $Q\scite{Schur}=\Han(f)=J\scite{Elliott}=J/432$;
$J\scite{Glenn}=J/72$.)

By \eqref{gund=hankel}, 
the second Gundelfinger covariant in \refE{EGund} is the invariant
\begin{equation}\label{gund24}
  G_2(f)=24^3\Han(f)=32\,J.
\end{equation}

Another way to construct $J$ is by taking
the joint apolar invariant $A(H(f),f)$; this invariant of degree 3 and
weight 6 equals 
$24 J$, see \eqref{AHf4}.

\begin{theorem}[{\cite[Satz 2.9]{Schur}}]\label{T4bas}
 $I$ and $J$ form a basis for the  invariants of quartic polynomials.
Furthermore, $I$ and $J$ are algebraically independent.
\end{theorem}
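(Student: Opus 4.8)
The plan is to prove the two assertions separately, starting with algebraic independence since it is elementary. Any polynomial relation $F(I,J)=0$ decomposes into its isobaric homogeneous parts, so it suffices to exclude any nonzero $F$, and I would do this by specialization. Restricting to the two-parameter family $f=x^4+px^2+q$, formulas \eqref{I4} and \eqref{J4} give $I=p^2+12q$ and $J=72pq-2p^3$, so the Jacobian is $\partial(I,J)/\partial(p,q)=2p\cdot 72p-12\,(72q-6p^2)=216p^2-864q\not\equiv0$. Hence $(p,q)\mapsto(I,J)$ is dominant, its image is Zariski dense in $\bbC^2$, and no nonzero polynomial vanishes on it; thus $I$ and $J$ are algebraically independent. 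In particular the monomials $I^aJ^b$ are linearly independent, and — since such a monomial has degree $2a+3b$ and, automatically by \eqref{weight}, weight $2(2a+3b)$ — the number of invariants of degree $\nu$ obtained this way is $\#\set{(a,b)\in\bbZgeo^2 : 2a+3b=\nu}$, with generating function $\frac{1}{(1-t^2)(1-t^3)}$ in the degree variable $t$.

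It remains to show these monomials span \emph{all} invariants. Since sums and products of invariants are invariants, $\bbC[I,J]$ is a graded subspace of the space of all invariants, and by the above its degree-$\nu$ part has dimension equal to the number of partitions of $\nu$ into parts $2$ and $3$; so it is enough to prove that the space of all invariants of degree $\nu$ has the \emph{same} dimension for every $\nu$, since then the inclusion is forced to be an equality in each degree. By \eqref{weight} an invariant of degree $\nu$ has weight $2\nu$, so by \refT{TGauss} this dimension is $N(4,\nu,2\nu)=[q^{2\nu}]\bigl((1-q)\GQ{\nu+4}{4}\bigr)$, that is, the number of partitions of $2\nu$ fitting in a $4\times\nu$ box minus the number of partitions of $2\nu-1$ fitting in such a box. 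The crux — and the step I expect to be the main obstacle — is the classical identity
\[
\sum_{\nu\ge0} N(4,\nu,2\nu)\,t^{\nu}=\frac{1}{(1-t^2)(1-t^3)},
\]
due to Cayley and Sylvester. I would prove it either by a direct manipulation of the Gaussian polynomials in \refT{TGauss}, or by a bijection between the box partitions counted above and partitions into $2$'s and $3$'s. Comparison of coefficients with the generating function of the first paragraph then shows that the space of invariants of degree $\nu$ and $\bbC[I,J]_\nu$ have the same dimension for all $\nu$, hence $\bbC[I,J]$ equals the full invariant ring and $\set{I,J}$ is a basis.

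I would also remark that the $q$-series step can be avoided: by \refT{Tinv0} the common zeros of the non-constant invariants of a quartic are exactly the forms with a root of multiplicity $\ge3$, while $\set{f:I(f)=J(f)=0}$ is the same set (using that $\gD$ is classically a polynomial in $I$ and $J$, and that a quartic with a repeated root can be put in the form $x^2g$ with $\deg g\le2$, for which $I(f)=a_2(f)^2$, forcing a triple root once $I=J=0$). Thus $I,J$ is a homogeneous system of parameters, the invariant ring is a finite module over the polynomial ring $\bbC[I,J]$, and — $\bbC[I,J]$ being integrally closed — the theorem reduces to checking that a generic quartic is determined up to $SL(2)$ by the pair $(I,J)$. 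I would nonetheless present the dimension count above as the main proof.
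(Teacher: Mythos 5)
The paper offers no proof of this theorem at all --- it is stated as a citation of \cite[Satz 2.9]{Schur} --- so there is nothing to compare your argument against line by line; your strategy (algebraic independence by specialization, then a degree-by-degree dimension count against the Cayley--Sylvester number $N(4,\nu,2\nu)$) is the classical one. The first half of your proposal is complete and correct: on $f=x^4+px^2+q$ the formulas \eqref{I4} and \eqref{J4} do give $I=p^2+12q$ and $J=72pq-2p^3$, the Jacobian is $216p^2-864q\not\equiv0$, and dominance of $(p,q)\mapsto(I,J)$ kills any polynomial relation. The counting framework is also set up correctly: monomials $I^aJ^b$ contribute $\xfrac{1}{(1-t^2)(1-t^3)}$ to the Hilbert series, and by \eqref{weight} the invariants of degree $\nu$ form the space of dimension $N(4,\nu,2\nu)$ of \refT{TGauss}.

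The genuine gap is the one you flag yourself and then do not close: the identity $\sum_{\nu\ge0}N(4,\nu,2\nu)\,t^\nu=\xfrac{1}{(1-t^2)(1-t^3)}$ is the \emph{entire} content of the spanning claim, and it is not a routine consequence of anything quoted in the paper. In particular \refC{CGauss} is of no help here, since it requires $\nu\ge w$ while you are in the regime $w=2\nu>\nu$; you genuinely have to extract the middle coefficient $[q^{2\nu}]$ of $(1-q)\GQ{\nu+4}{4}=\prod_{i=\nu+1}^{\nu+4}(1-q^i)\big/\prod_{i=2}^{4}(1-q^i)$, which is a real (if classical) computation, not a one-line observation. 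The identity is true --- it checks against \refTab{T4s} for $\nu\le6$ ($1,0,1,1,1,1,2,\dots$) and is carried out in Hilbert's lectures --- so your proof is fillable rather than wrong, but as written it stops exactly at its crux with ``I would prove it either by\dots''. Until one of those two routes is actually executed, you have only shown $\bbC[I,J]$ is \emph{contained} in the invariant ring with the right lower bound on dimensions, not that it exhausts it. (The alternative sketch in your last paragraph has the same character: the identification of $\set{I=J=0}$ with the triple-root locus is fine and matches \refT{Tinv0}, but ``finite module over an h.s.o.p.\ plus a generic separation statement'' does not by itself yield equality of rings without a further degree or Hilbert-series argument --- which lands you back at the same identity.)
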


Thus, informally speaking, $I$ and $J$ are the only invariants. More
precisely, every invariant is an isobaric polynomial in $I$ and $J$.
For example, the discriminant is such a polynomial; a calculation reveals that
\begin{equation}\label{DIJ4}
  \gD=\frac{4}{27}I^3-\frac1{27}J^2.
\end{equation}

See \refSS{SS4further} for further examples.


\begin{example}
  \label{E4abs}
Since $I^3$ and $J^2$ both are invariants of degree 6 and weight 12, the
quotient $I^3/J^2$ is an absolute invariant. 
Similarly, $J^2/I^3$,
$I^3/\gD$, $J^2/\gD$, etc.\ are absolute invariants; these are all simple
rational functions of each other. In fact, since $I$ and $J$ form a basis
for the invariants, it is easy to see that
every absolute invariant is a rational function of
$I^3/J^2$, or of any other of the absolute invariants just given.
\end{example}

\subsection{Covariants}
The form $f$ itself is a covariant of degree 1, weight 0 and order 4, see
\refE{Ef}. 

The Hessian covariant is the polynomial of degree $2(\nn-2)=4$ given by
\eqref{hessianp}, which yields
  \begin{multline}\label{H4}
H(f;x)=
\left( 24\,{a_0}\,{a_2}-9\,a_1^{2} \right) {x}^{4}
+ \left( 72\,{a_0}\,{a_3}-12\,{a_1}\,{a_2} \right) {x}^{3}
\\+ \left( 144\,{a_0}\,{a_4}+18\,{a_1}\,{a_3}-12\,a_2^{2}\right) {x}^{2}
+ \left( 72\,{a_1}\,{a_4}-12\,{a_2}\,{a_3} \right) x
\\+(24\,{a_2}\,{a_4}-9\,a_3^{2})
.	
  \end{multline}
This is a covariant of degree 2, weight 2 and order 4.
We also define 
\begin{multline}\label{tH4}
  \tH(f;x)\=\frac13H(f;x)
=
 \left( 8\,{a_0}\,{a_2}-3\,a_1^{2} \right) {x}^{4}
+ \left( 24\,{a_0}\,{a_3}-4\,{a_1}\,{a_2} \right) {x}^{3}
\\
+\left(48\,{a_0}\,{a_4}+6\,{a_1}\,{a_3}-4\,a_2^{2}\right)
  {x}^{2}
+ \left( 24\,{a_1}\,{a_4}-4\,{a_2}\,{a_3} \right) x
+8\,{a_2}\,{a_4}-3\,a_3^{2}.
\end{multline}
$\tH$ too has degree 2, weight 2 and order~4.
(Other notations:
$g_4{}\scite{Cred}=g_4{}\scite{Cclass}=-\tH$;
$h\scite{Schur}=\tH/48=H/144$; $H\scite{Schur}=H$.)

The Jacobian determinant, see \refE{EJacobian}, of $f$ and $H(f)$ is a
covariant of order $4+4-2=6$ given by
\begin{equation}\label{JH4}
  \begin{split}
\JH(f)&=	
 \left( 288\,a_0^{2}\,{a_3}-144\,{a_0}\,{a_1}\,{a_2}
  +36\,a_1^{3} \right) {x}^{6}
\\&\qquad{}
+ \left( 1152\,a_0^{2}\,{a_4}+144\,{a_0}\,{a_1}\,{a_3}
  -288\,{a_0}\,a_2^{2}+72\,a_1^{2}\,{a_2} \right){x}^{5}
\\&\qquad{}
+ \left( 1440\,{a_0}\,{a_1}\,{a_4}-720\,{a_0}\,{a_2}\,{a_3}
  +180\,a_1^{2}\,{a_3} \right) {x}^{4}
\\&\qquad{}
 +\left(-720\,{a_0}\,a_3^{2}+720\,a_1^{2}\,{a_4}\right){x}^{3}
\\&\qquad{}
+ \left( -1440\,{a_0}\,{a_3}\,{a_4}+720\,{a_1}\,{a_2}\,{a_4}
  -180\,{a_1}\,a_3^{2} \right) {x}^{2}
\\&\qquad{}
+ \left( -1152\,{a_0}\,a_4^{2}-144\,{a_1}\,{a_3}\,{a_4}
  +288\,a_2^{2}\,{a_4}-72\,{a_2}\,a_3^{2} \right) x
\\&\qquad{}
+144\,{a_2}\,{a_3}\,{a_4}-36\,a_3^{3}
  -288\,{a_1}\,a_4^{2}.
  \end{split}
\end{equation}
We normalize this to $\tJH(f;x)\=\JH(f;x)/36$, where thus
\begin{equation}\label{tJH4}
  \begin{split}
\tJH(f)&=	
 \left( 8\,a_0^{2}\,{a_3}-4\,{a_0}\,{a_1}\,{a_2}
  +a_1^{3} \right) {x}^{6}
\\&\qquad
+ \left( 32\,a_0^{2}\,{a_4}+4\,{a_0}\,{a_1}\,{a_3}
  -8\,{a_0}\,a_2^{2}+2\,a_1^{2}\,{a_2} \right) {x}^{5}
\\&\qquad
+ \left( 40\,{a_0}\,{a_1}\,{a_4}-20\,{a_0}\,{a_2}\,{a_3}
  +5\,a_1^{2}\,{a_3} \right) {x}^{4}
\\&\qquad
+ \left(-20\,{a_0}\,a_3^{2}+20\,a_1^{2}\,{a_4} \right){x}^{3}
\\&\qquad
+ \left( -40\,{a_0}\,{a_3}\,{a_4}+20\,{a_1}\,{a_2}\,{a_4}
   -5\,{a_1}\,a_3^{2} \right) {x}^{2}
\\&\qquad
+ \left( -32\,{a_0}\,a_4^{2}-4\,{a_1}\,{a_3}\,{a_4}
  +8\,a_2^{2}\,{a_4}-2\,{a_2}\,a_3^{2} \right) x
\\&\qquad
+4\,{a_2}\,{a_3}\,{a_4}-a_3^{3}-8\,{a_1}\,a_4^{2}
  \end{split}
\end{equation}
(Other notations:
$g_6{}\scite{Cred}=g_6{}\scite{Cclass}=\tJH$;
$j\scite{Schur}=\tJH/32=\JH/1152$.)
$\JH$ and $\tJH$ have degree 3, weight 3 and order 6.

\begin{theorem}[{\cite[Satz 2.25]{Schur}}]\label{Tcov4}
  The invariants $I$ and $J$ and the covariants $f$, $H$ and $\JH$ form a
  basis for the covariants of quartic polynomials.
\end{theorem}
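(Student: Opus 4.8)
The plan is to pass to seminvariants via \refT{Tcov} and then write every seminvariant out explicitly. Thus it suffices to show: every seminvariant $\Phi$ of quartic polynomials, of degree $\nu$ and weight $w$, is an isobaric polynomial in $a_0$ and the sources $P$ and $Q$ of $H$ and $\JH$, together with the invariants $I$ and $J$. Going back through \refT{Tcov} then gives the covariant statement, and the single syzygy found below accounts for all relations among $f,H,\JH,I,J$. I would organise the proof according to the sign of $\nu-w$.

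The case $\nu\ge w$ is quick. By \refT{Tredall}, $\Phi\in\bbC[a_0,\;a_0\ar_2,\;a_0^2\ar_3,\;a_0^3\ar_4]$, so it is enough to locate these three ``reduced'' seminvariants in $\bbC[a_0,P,Q,I]$. By \refE{Ered1} (with $\nn=4$), $a_0\ar_2$ is a nonzero constant multiple of the Hessian seminvariant $H_0$, hence of $P$; $a_0^2\ar_3$ lies in the one-dimensional space of seminvariants of bidegree $(3,3)$ (\refC{CGauss}), which is spanned by the nonzero seminvariant $Q$; and, since $I$ is translation invariant, evaluating \eqref{I4} on the reduced polynomial gives $I=12a_0\ar_4+\ar_2^2$, so $a_0^3\ar_4=\tfrac1{12}\bigpar{a_0^2I-(a_0\ar_2)^2}\in\bbC[a_0,P,I]$. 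Hence $\Phi\in\bbC[a_0,P,Q,I]\subseteq\bbC[a_0,P,Q,I,J]$ when $\nu\ge w$.

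The substantial case is $\nu<w$; note this includes every non-constant invariant, where $w=2\nu$, so the argument also reproves \refT{T4bas}. Multiplying up, $a_0^{w-\nu}\Phi$ has degree equal to its weight and so lies in $\bbC[a_0,P,Q,I]$ by the previous step; I then cancel the power of $a_0$ one factor at a time using the following \emph{saturation lemma}: if $\Psi$ is a seminvariant with $a_0\Psi\in\bbC[a_0,P,Q,I,J]$, then $\Psi\in\bbC[a_0,P,Q,I,J]$. To prove it, write $a_0\Psi=F(a_0,P,Q,I,J)$ and invoke the classical quartic syzygy $\JH^2=\alpha f^3J+\beta f^2IH+\gamma H^3$ (its existence is forced by the dimension of the space of covariants of bidegree $(6,6)$, see \refTab{T4c}; the constants, with $\alpha,\gamma\neq0$, come from a short computation, or from \eqref{DIJ4} via the reduced form), which at the level of sources reads $Q^2=\alpha a_0^3J+\beta a_0^2IP+\gamma P^3$. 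Modulo this relation we may assume $\deg_Q F\le1$, so $a_0\Psi=F_0(a_0,P,I,J)+F_1(a_0,P,I,J)\,Q$ as seminvariants. Now apply the restriction to $a_0=0$ of \refT{Trestr}: $P,Q,I,J$ restrict, up to nonzero constants, to $a_1^2,\ a_1^3,\ -P\dgx3,\ -Q\dgx3$, where $P\dgx3,Q\dgx3$ are the cubic seminvariants of $a_1x^3+a_2x^2+a_3x+a_4$. Here $a_1,P\dgx3,Q\dgx3$ are algebraically independent (a consequence of the cubic basis theorem and the syzygy \eqref{gDPQ}), and $a_1^3\notin\bbC[a_1^2,P\dgx3,Q\dgx3]$: grading $a_1,a_3$ by $1$ and $a_2,a_4$ by $0$ modulo $2$, the right-hand side is homogeneous of degree $0$ while $a_1^3$ has degree $1$. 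Hence $\set{1,Q}$ is free over $\bbC$-polynomials in $P,I,J$ after restriction, so $F_0(0,\cdot)$ and $F_1(0,\cdot)$ vanish identically; therefore $a_0\mid F_0$ and $a_0\mid F_1$ in $\bbC[a_0,P,I,J]$, and cancelling $a_0$ (the seminvariant algebra being an integral domain) gives $\Psi\in\bbC[a_0,P,Q,I,J]$. Iterating the lemma disposes of the case $\nu<w$, and \refT{Tcov} then delivers the covariant statement.

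I expect the saturation lemma to be the main obstacle, as it is the one step requiring genuine structural input rather than bookkeeping: the explicit syzygy, with the non-vanishing of its $P^3$- and $a_0^3J$-coefficients, and the identification of the $a_0=0$ restrictions in terms of the already-settled cubic case. An alternative, purely enumerative route: by \refT{Tcov} and \refT{TGauss} it would suffice to check that the monomials $a_0^{e_0}P^{e_1}Q^{e_2}I^{e_3}J^{e_4}$ with $e_2\le1$ (a spanning set modulo the syzygy) are linearly independent as seminvariants and that their bidegree generating function $\dfrac{1+x^3q^3}{(1-x)(1-x^2q^2)(1-x^2q^4)(1-x^3q^6)}$ equals $\sum_{\nu,w}N(4,\nu,w)\,x^\nu q^w$; that identity --- equivalently, the absence of any covariants not produced by $f,H,\JH,I,J$ modulo the one syzygy --- would then be the crux instead.
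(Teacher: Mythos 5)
The paper gives no proof of this theorem at all: it is quoted from Schur (Satz 2.25), and the surrounding text only records the syzygy \eqref{syz41}--\eqref{syz42} and the seminvariant translation \refT{Tsem4}. So your argument cannot be compared with the paper's; judged on its own it is essentially correct and supplies a genuine proof. The easy half ($\nu\ge w$ via \refT{Tredall}) checks out against \eqref{p4P}--\eqref{R4} and \eqref{RIP4}. The substantive step, your saturation lemma, is sound: the syzygy \eqref{syz41s} reduces to $\deg_Q F\le 1$; the restrictions of $P,Q,I,J$ at $a_0=0$ are $-3a_1^2,\ a_1^3,\ -P\dgx3,\ -Q\dgx3$ exactly as recorded in \refSS{SS40}; and your parity grading correctly separates $a_1^3$ from $\bbC[a_1^2,P\dgx3,Q\dgx3]$ (equivalently, $a_1^3$ has bidegree $(\nu,w)=(3,0)$ as a cubic seminvariant, whereas $a_1^2$, $P\dgx3$, $Q\dgx3$ all have $\nu-w$ even). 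Splitting the restricted identity into its even and odd graded parts then kills $F_0(0,\cdot)$ and $F_1(0,\cdot)$, and cancelling $a_0$ in the integral domain finishes the lemma. This is an attractive inductive scheme: it bootstraps the quartic case from the already-settled cubic case via the restriction map of \refT{Trestr}.

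Two places deserve tightening. First, the algebraic independence of $a_1$, $P\dgx3$, $Q\dgx3$ does not follow formally from the cubic basis theorem (the paper's notion of ``basis'' carries no independence claim); it does follow instantly from the generic invertibility of $(a_0,a_1,a_2,a_3)\mapsto(a_0,a_1,P,Q)$ for cubics, so state that instead. Second, the dimension count $N(4,6,6)=3$ only forces \emph{some} linear relation among $\JH^2$, $f^3J$, $f^2IH$, $H^3$; that it can be solved for $\JH^2$ with $\gamma\neq0$ requires the explicit identity \eqref{syz42}, which you rightly defer to a direct computation (and which the paper records). Your closing remark that this single syzygy accounts for \emph{all} relations among $f,H,\JH,I,J$ is asserted but not proved; it is not needed for the statement as the paper defines ``basis'', so it should be dropped or flagged as unproven.
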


The basic covariants satisfy the relation ({syzygy})
\begin{equation}\label{syz41}
  \tH^3-48\, If^2\tH+64\,Jf^3+27\,\tJH^2=0.
\end{equation}
or 
\begin{equation}\label{syz42}
  2^4H^3-2^8 3^3 If^2\,H+2^{10}3^3Jf^3+3^2\JH^2=0.
\end{equation}

The only non-trivial Gundelfinger covariants are $G_1(f)=H(f;x)$ and
$G_2(f)=24^3\Han(f)=32\,J$, see
 \eqref{gund24}.

\subsection{Reduced form}
The reduced form of  $f$ is
\begin{equation}\label{red4}
\red f(x)
=a_0 x^4+px^2+qx+r
\=f\Bigpar{x-\frac{a_1}{4a_0}};
\end{equation}
thus
$p\=\ar_2$, $q\=\ar_3$, $r\=\ar_4$.
These rational seminvariants are given by
\begin{align}\label{p4}
  p&={\frac {8\,{a_0}\,{a_2}-3\,a_1^{2}}
{8\,{a_0}}}
\\\label{q4}
q&={\frac 
{8\,a_0^{2}\,{a_3}-4\,{a_0}\,{a_1}\,{a_2} +a_1^{3}}
{8\,a_0^{2}}}
\\\label{r4}
r&={\frac 
 {256\,a_0^{3}\,{a_4}
 -64\,a_0^{2}\,{a_1}\,{a_3}
 +16\,{a_0}\,a_1^{2}\,{a_2}
 -3\,a_1^{4}}
{256a_0^{3}}}
\end{align}

In terms of the coefficients of the reduced polynomial $\red f$, the
discriminant is given by
\begin{equation}\label{D4red}
\begin{split}
\gD(f)
=
\gD(\red f)
&=
-4\,a_0{p}^{3}{q}^{2}
+16\,a_0{p}^{4}r
-27\,a_0^2{q}^{4}
\\&\qquad
+144a_0^2\,p{q}^{2}r
-128\,a_0^2{p}^{2}{r}^{2}
+256\,a_0^3{r}^{3}
.
 \end{split}
\end{equation}

\subsection{Seminvariants}
We denote the numerators of \eqref{p4}--\eqref{r4} by 
$P$, $Q$, $R$, respectively,
and have thus
\begin{align}
  p&={\frac P {8\,{a_0}}},\label{p4P}
\\
q&={\frac Q {8\,a_0^{2}}},\label{q4Q}
\\
r&={\frac R{256a_0^{3}}}, \label{r4R}
\intertext{with}
  P&\= {8\,{a_0}\,{a_2}-3\,a_1^{2}},\label{P4}
\\
Q&\= {8\,a_0^{2}\,{a_3}-4\,{a_0}\,{a_1}\,{a_2}
+a_1^{3}},
\label{Q4}
\\
R&\= {256\,a_0^{3}\,{a_4}-64\,a_0^{2}\,{a_1}\,{a_3}
+16\,{a_0}\,a_1^{2}\,{a_2}-3\,a_1^{4}}.
\label{R4}
\end{align}
($R$ should not be confused with the resultant in \refS{Sroots}.
Other notations: 
$P=H\scite{Cred}=-p\scite{Cclass}$;
$Q=R\scite{Cred}=r\scite{Cclass}$.)
These are seminvariants of degree and weight $(2,2)$, $(3,3)$ and $(4,4)$.
We have
\begin{equation}\label{RIP4}
  R=\frac13\bigpar{64\,a_0^2\,I-P^2}.
\end{equation}

The Hessian source $H_0$ is by \eqref{H4} the seminvariant of degree 2
and weight~4 
\begin{equation}\label{HP4}
H_0= 24\,{a_0}\,{a_2}-9\,a_1^{2}
=3\,P.
\end{equation}
Thus the source of the covariant $\tH$ is  $P$.

By \eqref{JH4}--\eqref{tJH4} and \eqref{Q4},
$Q$ is the source of $\tJH$, while the source of $\JH$
is $36\,Q$.

\refT{Tcov4} and the syzygy \eqref{syz41} translate to the following.

\begin{theorem}\label{Tsem4}
  The invariants $I$ and $J$ and the seminvariants $a_0$, $P$ and $Q$ form a
  basis for the seminvariants of quartic polynomials. These satisfy
the syzygy
\begin{equation}\label{syz41s}
  P^3-48\, I\,a_0^2\,P+64\,J\,a_0^3+27\,Q^2=0.
\end{equation}
\end{theorem}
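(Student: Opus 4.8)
The plan is to derive both statements from \refT{Tcov4} together with the covariant--seminvariant correspondence of \refT{Tcov}, so that no direct manipulation of the explicit polynomials \eqref{P4}--\eqref{Q4}, \eqref{I4}, \eqref{J4} is needed.

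First I would record that the source operation (passing from a covariant to its leading coefficient, \eqref{co2semi}) is multiplicative --- the leading term of a product $\Psi_1\Psi_2$ of covariants is the product of the leading terms, since the orders add --- and additive on covariants of a fixed order. Combined with \refT{Tcov}, which says this operation is a degree-, order- and weight-preserving bijection from covariants onto seminvariants, it follows that if $\Psi$ is a covariant written as a (necessarily $x$-homogeneous) polynomial in a basis $\cB$ of covariants, then its source is that same polynomial evaluated at the sources of the elements of $\cB$. Now, by \refE{Efs} the source of $f$ is $a_0$; by \eqref{HP4} the source of $H$ is $3P$, hence that of $\tH$ is $P$; by the discussion after \eqref{tJH4} the source of $\JH$ is $36Q$, hence that of $\tJH$ is $Q$; and $I$, $J$, being covariants of order $0$, are their own sources. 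Applying the source operation to the covariant basis $\{f,H,\JH,I,J\}$ of \refT{Tcov4} therefore yields $\{a_0,3P,36Q,I,J\}$, which generates the same ring as $\{a_0,P,Q,I,J\}$; this is the first assertion.

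Next I would obtain the syzygy \eqref{syz41s} by applying the source operation to the covariant syzygy \eqref{syz41}. Each of the four monomials there is a covariant of order $12$: $\tH$ has order $4$, $f$ has order $4$, and $\tJH$ has order $6$, so $3\cdot 4 = 8+4 = 12 = 2\cdot 6$. Hence the source operation may be applied term by term, using multiplicativity within each monomial and additivity across the homogeneous sum; since the sources of $\tH$, $f$, $\tJH$, $I$, $J$ are $P$, $a_0$, $Q$, $I$, $J$, the identity $\tH^3-48If^2\tH+64Jf^3+27\tJH^2=0$ becomes $P^3-48Ia_0^2P+64Ja_0^3+27Q^2=0$, i.e.\ \eqref{syz41s}. (Alternatively \eqref{syz41s} can be checked by brute-force substitution of \eqref{P4}, \eqref{Q4}, \eqref{I4}, \eqref{J4}, but that is a lengthy computation which the above argument renders unnecessary.)

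I do not expect a genuine obstacle. The only points requiring care are bookkeeping: tracking the constant factors $3$ and $36$ relating $H$ and $\JH$ to their sources (this is precisely why the normalized covariants $\tH$, $\tJH$ are convenient), and checking that every term of \eqref{syz41} really has the same order so that the source operation transports the identity faithfully --- and this last point is automatic, since \eqref{syz41} is a valid identity between honest covariants and is therefore $x$-homogeneous.
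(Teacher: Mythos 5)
Your proposal is correct and follows essentially the same route as the paper, which simply asserts that \refT{Tcov4} and the syzygy \eqref{syz41} ``translate'' to the seminvariant statement via the source correspondence of \refT{Tcov}. You merely make explicit the routine verification that taking sources is additive and multiplicative on covariants of matching order, which is exactly the translation the paper has in mind.
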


\subsection{Cubic resolvent}
Let $\tp\=p/a_0$, $\tq\=q/a_0$, $\tr\=r/a_0$, the coefficients of the
reduced monic polynomial $\fr/a_0$.
The  \emph{cubic resolvent} of $f$ is the cubic polynomial
\begin{equation}\label{Res4}
  \begin{split}
  \Res(f;x)&\=x^3+2\tp x^2+(\tp^2-4\tr)x-\tq^2\\
&\phantom:
=x^3+\frac{P}{4\,a_0^2}x^2+\frac{P^2-R}{64\,a_0^4}x-\frac{Q^2}{64\,a_0^6},
\end{split}
\end{equation}
see \eg{} \cite{SJN7}.
The numerator $P^2-R$ is a seminvariant of degree and weight 4, and we have
by \eqref{RIP4}
\begin{equation}\label{P2-R}
  \frac{P^2-R}4
=
\frac{P^2-16\,a_0^2\,I}{3}
=
-64\,a_0^{3}\,{a_4}+16\,a_0^{2}\,{a_1}\,{a_3}
+16\,a_0^{2}\,a_2^{2}-16\,{a_0}\,a_1^{2}\,{a_2}
+3\,a_1^{4}.
\end{equation}
This seminvariant is used in
\cite{Cred,Cclass} with the notations 
\begin{equation}\label{P2-Rb}
Q\scite{Cred}=q\scite{Cclass}\=
\frac13\bigpar{P^2-16\,a_0^2\,I}
=  \frac{P^2-R}4.
\end{equation}

The reduced form of the cubic resolvent is, after some calculations,
\begin{equation}\label{tRes4}
  \begin{split}
\tRes(f;x)\=
\Res\Bigpar{f;x-\frac{P}{12\,a_0^2}}	
=x^3-\frac{I}{3\,a_0^2}\,x
+\frac{J}{27\,a_0^3}.
  \end{split}
\end{equation}
Changing the variable to clear the denominators, we find
\begin{equation}
  \begin{split}\label{hRes4}
(3\,a_0)^3\,\tRes(f;x/3a_0)
=
27\,a_0^3\,\Res\Bigpar{f;\frac{4a_0\,x-P}{12\,a_0^2}}	
=x^3-3I\,x+J.
  \end{split}
\end{equation}
Thus the cubic polynomial $\Resx(f;x)\=x^3-3I\,x+J$ is also a form of the
resolvent. 

\begin{remark}\label{Rgam}
The roots 
of the cubic resolvent $\Res(f)$ are $\gam_1^2$, $\gam_2^2$, $\gam_3^2$, where 
\begin{align}
  \gam_1&\=\tfrac12(\xi_1+\xi_2-\xi_3-\xi_4),
\label{gam1}\\
  \gam_2&\=\tfrac12(\xi_1-\xi_2+\xi_3-\xi_4),
\label{gam2}\\
  \gam_3&\=\tfrac12(\xi_1-\xi_2-\xi_3+\xi_4),
\label{gam3}
\end{align}
The quartic equation $f(x)=0$ can thus be solved by finding the roots of
$\Res(f)$, 
taking the square roots to find $\gam_1,\gam_2,\gam_3$, 
with the signs satisfying $\gam_1\gam_2\gam_3=-\tq$, and finally inverting
\eqref{gam1}--\eqref{gam3} together with $\xi_1+\xi_2+\xi_3+\xi_4=-\tp$, see
\cite{SJN7}.  Alternatively,
one can first
find the roots of $\tRes(f)$ or
$\Resx(f)$; for example, if the roots of $\Resx(f)$ are $z_1,z_2,z_3$, we
take $\gam_i=\pm\frac12a_0\qw\sqrt{(4a_0 z_i-P)/3}$.
Equivalently, the roots of $\Resx(f)$ are 
\begin{equation}
z_i\=  3a_0\gam_i^2+\frac{P}{4a_0}
=  3a_0\gam_i^2+2p,
\qquad i=1,2,3,
\end{equation}
while the roots of  $\tRes(f)$ are 
\begin{equation}
\frac{z_i}{3a_0}= \gam_i^2+\frac{P}{12\,a_0^2}
=  \gam_i^2+\frac{2p}{3a_0},
\qquad i=1,2,3.
\end{equation}
\end{remark}

\begin{remark}
  Another common version of the cubic resolvent is (see \cite{SJN7})
  \begin{equation}
	\begin{split}
\Resy(f; x)&\=\Res\Bigpar{f;x-\tp-\frac{a_1^2}{8a_0^2}}
=\Res\Bigpar{f;x-\frac{4a_0a_2-a_1^2}{4a_0^2}}
\\&\phantom:
=x^3-\frac{a_2}{a_0}x^2+\frac{a_1a_3-4a_0a_2}{a_0^2}x
+\frac{4a_0a_2a_4-a_0a_3^2-a_1^2a_4}{a_0^3}	  
.
	\end{split}
  \end{equation}
This has the roots $\xi_1\xi_2+\xi_3\xi_4$, $\xi_1\xi_3+\xi_2\xi_4$ and
$\xi_1\xi_4+\xi_2\xi_3$. However, these roots are \emph{not} translation
invariant, so the coefficients of $\Resy$ are \emph{not} seminvariants.
\end{remark}

We have, by \eqref{tRes4} and \eqref{gdpq}, \eqref{DIJ4} and \eqref{dd},
  \begin{equation}\label{gDres4}
	\gD\dgx3(\Res(f))=
	\gD\dgx3(\tRes(f))=
\frac{4I^3}{27a_0^6}
-\frac{J^2}{27a_0^6}
=a_0^{-6}\gD=\gD_0.
  \end{equation}
Further, by \eqref{tRes4} and \eqref{P3} or \eqref{p3P},
  \begin{align}
	P\dgx3(\Res(f))&=
	P\dgx3(\tRes(f))
=-a_0^{-2}I, \label{Pres4}
\\
Q\dgx3(\Res(f))&=
Q\dgx3(\tRes(f))
=a_0^{-3}J. \label{Qres4}
  \end{align}
For the version $\Resx(f)=x^3-3I\,x+J$ we have, directly from
\eqref{gdpq}--\eqref{q3Q}, the corresponding
  \begin{align}
\gD\dgx3(\Resx(f))&=4\cdot27\,I^3-27\,J^2=3^6\gD, \label{gDhres4}
\\
P\dgx3(\Resx(f))&=-9\,I, \label{Phres4}
\\
Q\dgx3(\Resx(f))&=27\,J. \label{Qhres4}
  \end{align}

\subsection{Seminvariants of $f'$}\label{SS4f'}
We calculate the basic seminvariants of the cubic polynomial $f'$:
\begin{align}
  \gD\dgx3(f')&=
-432\,a_0^{2}\,a_3^{2}
+432\,{a_0}\,{a_1}\,{a_2}\,{a_3}
-128\,{a_0}\,a_2^{3}
-108\,a_1^{3}\,{a_3}
+36\,a_1^{2}\,a_2^{2}
\notag
\\&
=16\,a_0\,J-12\,I P,
\label{gdf'4}
\\
P\dgx3(f')&=
24\,{a_0}\,{a_2}-9\,a_1^{2}
=3P,
\label{pf'4}
\\
Q\dgx3(f')&=
432\,a_0^{2}\,{a_3}-216\,{a_0}\,{a_1}\,{a_2}
+54\,a_1^{3}
=54 \,Q.
\label{qf'4}
\end{align}

\subsection{The case $a_0=0$}\label{SS40}

When $a_0=0$, \ie, considering the restriction to polynomials of degree 3,
we have, \cf{} \refE{Ediskr-1}, for any polynomial $f\in\cP_3$,
\begin{align}
  \gD\dgx4
&=a\dgxx03^2\gD\dgx3,
\\
  I\dgx4
&=-P\dgx3,
\\
  J\dgx4
&=-Q\dgx3,
\\
  P\dgx4
&=-3\,a\dgxx03^2,
\\
  Q\dgx4
&=a\dgxx03^3,
\intertext{In particular, for $f\in\cP_4$, using \eqref{gdf'4}--\eqref{qf'4},}
  \gD\dgx4(f')&=16\,a_0^2\,\gD\dgx3(f')
=256\,a_0^3\,J-192\,a_0^2\,I\,P,
\\
  I\dgx4(f')
&=-3\,P,
\\
  J\dgx4(f')
&=-54\,Q,
\\
  P\dgx4(f')
&=-48\,a_0^2,
\\
  Q\dgx4(f')
&=64\,a_0^3.
\end{align}

\subsection{Seminvariants and roots}\label{SS4roots}
By \refE{DD}, 
\begin{equation}
  \gD=a_0^6(\xi_1-\xi_2)^2(\xi_1-\xi_3)^2(\xi_1-\xi_4)^2
      (\xi_2-\xi_3)^2(\xi_2-\xi_4)^2(\xi_3-\xi_4)^2.
\end{equation}

For $I$ and $J$ we obtain by calculations, using $\sum^*$ to denote a sum
over different indices, where moreover identical terms are counted only once
(thus, for example, $\sum^*_{i,j}\xi_i\xi_j=\sum_{i<j}\xi_i\xi_j$),
\begin{equation}\label{I4xi}
I=
a_0^2\biggpar{
\sumx_{i,j}\xi_i^2\xi_j^2
-\sumx_{i,j,k}\xi_i^2\xi_j\xi_k
+6\xi_1\xi_2\xi_3\xi_4}
\end{equation}
where the first sum has 6 terms and the second 12,
and
\begin{multline}\label{J4xi}
J=
a_0^3\biggpar{
-2\sumx_{i,j}\xi_i^3\xi_j^3
+3\sumx_{i,j,k}\xi_i^3\xi_j^2\xi_k
-12\sumx_{i,j,k,l}\xi_i^3\xi_j\xi_k\xi_l
\\
-12\sumx_{i,j,k}\xi_i^2\xi_j^2\xi_k^2
+6\sumx_{i,j,k,l}\xi_i^2\xi_j^2\xi_k\xi_l
},  
\end{multline}
where the sums have 6, 24, 4, 4 and 6 terms. 

For the seminvariants we have first,
by \eqref{Hroots}, 
\begin{equation}
  H_0=-3\,a_0^2\sum_{1\le i<j\le 4}(\xi_i-\xi_j)^2
	=-3\,a_0^2\biggpar{3\sum_{i=1}^4\xi_i^2-2\sum_{1\le i<j\le 4}\xi_i\xi_j}
\end{equation}
and thus, by \eqref{HP4},
\begin{equation}\label{P4xi}
  P=-\,a_0^2\sum_{1\le i<j\le 4}(\xi_i-\xi_j)^2
=
-a_0^2\biggpar{
3\sum_i \xi_i^2
-2\sumx_{i,j}\xi_i\xi_j}
\end{equation}
where the sums have  4 and 6 terms. Further, by calculation,
\begin{equation}\label{Q4xi}
 Q=
-a_0^3\biggpar{\sum_i\xi_i^3
-\sumx_{i,j}\xi_i^2\xi_j
+2\sumx_{i,j,k}\xi_i\xi_j\xi_k}
\end{equation}
where the sums have  4, 12  and 4 terms.

The formulas \eqref{I4xi} and \eqref{P4xi} for $I$ and $P$
cannot be factorized further, but for $J$ and $Q$ we have
\begin{equation}\label{J4xifact}
  \begin{split}
J&=
-a_0^3
\bigpar{\left( {\xi_1}-{\xi_3} \right)  \left( {\xi_2}-{\xi_4} \right) 
 + \left( {\xi_1}-{\xi_4} \right)  \left( {\xi_2}-{\xi_3} \right)}  
\\&\qquad\qquad\cdot
\bigpar{ \left( {\xi_1}-{\xi_2} \right)  \left({\xi_3}-{\xi_4} \right) 
 + \left( {\xi_1}-{\xi_4} \right)  \left( {\xi_3}-{\xi_2} \right)  }
\\&\qquad\qquad\cdot
\bigpar{\left( {\xi_1}-{\xi_2} \right)  \left( {\xi_4}-{\xi_3} \right) 
 + \left( {\xi_1}-{\xi_3} \right)  \left( {\xi_4}-{\xi_2} \right)}
  \end{split}
\end{equation}
and
\begin{equation}\label{Q4xifact}
  \begin{split}
Q&=
-a_0^{3} \left( {\xi_1}+{\xi_2}-{\xi_3}-{\xi_4} \right) 
 \left( {\xi_1}-{\xi_2}+{\xi_3}-{\xi_4} \right)  
\left( {\xi_1}-{\xi_2}-{\xi_3}+{\xi_4} \right) 	.
  \end{split}
\end{equation}

Explicit formulas for the covariants $H$ and $\JH$ in terms of the roots
can be obtained from \refE{EHessroots} and \refT{Tcovroots} together with
\eqref{Q4xi} or \eqref{Q4xifact}. We leave these to the reader.

\subsection{Cross ratio}\label{SS4cr}
Let $F$ be a field and $\Fx\=F\cup\set\infty$. The \emph{cross ratio}
$[x_1,x_2;x_3,x_4]$ is defined for $x_1,x_2,x_3,x_4\in\Fx$ by
\begin{equation}
  \label{cr}
\crx{x}\= 
\frac{\left( {x_1}-{x_3} \right)  \left( {x_2}-{x_4} \right) }
{\left( {x_1}-{x_4} \right)  \left( {x_2}-{x_3} \right)}
\in\Fx.
\end{equation}
More precisely, the cross ratio is well-defined by \eqref{cr} if
$x_1,x_2,x_3,x_4\in F$ are distinct, and more generally if
$x_1,x_2,x_3,x_4\in \Fx$ are distinct with the natural interpretations
\begin{equation}
\begin{aligned}
{}  [\infty,x_2;x_3,x_4]&=\frac{x_2-x_4}{x_2-x_3},
&\qquad
  [x_1,\infty;x_3,x_4]&=\frac{x_1-x_3}{x_1-x_4},
\\
  [x_1,x_2;\infty,x_4]&=\frac{x_2-x_4}{x_1-x_4},
&\qquad
  [x_1,x_2;x_3,\infty]&=\frac{x_1-x_3}{x_2-x_3}.
\end{aligned}  
\end{equation}
Furthermore, the cross ratio is also  defined when two of
$x_1,\dots,x_4\in\Fx$ coincide, and even when two different pairs of them
coincide. (In these cases, the cross ratio is always 0, 1 or $\infty$; it is
$0$ if $x_1=x_3$ or $x_2=x_4$, 1 if $x_1=x_2$ or $x_3=x_4$, and $\infty$ if
$x_1=x_4$ or $x_2=x_3$.)
In the remaining cases, when three or four of $x_1,\dots,x_4$ coincide, the
cross ratio is undefined.

If $x_2,x_3,x_4\in\Fx$ are distinct, then $x\mapsto[x,x_2;x_3,x_4]$ is the
unique projective (= fractional linear) map $\Fx\to\Fx$ that maps
$x_2\mapsto1$, $x_3\mapsto0$, $x_4\mapsto\infty$.

The cross ratio depends on the order of $x_1,\dots,x_4$, and the 24
different permutations give, in general, 6 different values. These values
determine each other;
if $\crx{x}=\gl$, then, 
\begin{align*}
\crx x&=
  [x_2,x_1;x_4,x_3]=[x_3,x_4;x_1,x_2]=[x_4,x_3;x_2,x_1]=\gl,
\\
\crxx1243&=\crxx2134=\crxx3421=\crxx4312=\frac1\gl,
\\
\crxx1324&=\crxx2413=\crxx3142=\crxx4231=1-\gl,
\\
\crxx1342&=\crxx2431=\crxx3124=\crxx4213=\frac{1}{1-\gl},
\\
\crxx1423&=\crxx2314=\crxx3241=\crxx4132=\frac{\gl-1}{\gl},
\\
\crxx1432&=\crxx2341=\crxx3214=\crxx4123=\frac{\gl}{\gl-1}.
\end{align*}
The symmetric group $S_4$ thus acts on the space $\Fx$. The functions of
$\gl$ above are all projective maps, and thus we have a homomorphism of
$S_4$ into the group $PGL(1,F)$ of projective maps; the kernel is the
four-group and the image is a subgroup of $PGL(1,F)$ of order 6, isomorphic
to $S_3$ (for example, by their permutations of \set{0,1,\infty}).
The orbits have in general 6 elements, but orbits including a fixpoint of
one of the non-trivial maps above are smaller; there are two or three such
exceptional orbits, \viz{} \set{0,1,\infty}, \set{-1,\frac12,2}, and,
provided $\sqrt{-3}\in F$,
\set{\frac12\pm\frac{\sqrt{-3}}2}.

We have $\crx{x}\in\set{0,1,\infty}$ if and only if two of $x_1,\dots,x_4$
coincide.

Quadruples $x_1,\dots,x_4$ with $\crx{x}\in\set{-1,\frac12,2}$ are called 
\emph{harmonic quadruples}. (For example, one point at infinity and three
points in an aritmetic sequence, such as $-1,0,1,\infty$. Another example is
four points equally spaced on a circle, such as $1,i,-1,-i$.)

Quadruples $x_1,\dots,x_4$ with $\crx{x}\in\set{\frac12\pm\frac{\sqrt{-3}}2}$
are called 
\emph{self-apolar} or \emph{equianharmonic}.
(For example, three points evenly spaced on a circle, together with either
the centre or infinity, such as $0,1,e^{2\pi\ii/3},e^{4\pi\ii/3}$.)

If $f\in\cP_4$, let $\xi_1,\dots,\xi_4$ be its roots, and
$\gl\=\crx{\xi}$. Then $\gl$ depends on the ordering of the roots, as
explained above, but the polynomial
\begin{equation}
\gL(z)\=(z-\gl)\Bigpar{z-\frac1\gl}\Bigpar{z-(1-\gl)}\Bigpar{z-\frac{1}{1-\gl}}
\Bigpar{z-\frac\gl{\gl-1}}\Bigpar{z-\frac{\gl-1}\gl}
\end{equation}
does not depend on the order, so it depends on $f$ only.
The coefficients of $\gL(z)$ are symmetric rational functions of
$\xi_1,\dots,\xi_4$, and are thus rational functions of the coefficients
$a_0,\dots,a_4$ of $f$. Moreover, $\gL(z)$ is invariant under projective
transformations, and is thus an absolute invariant of $f$.
A calculation yields, using \eqref{DIJ4},
\begin{equation}
  \begin{split}
\gL(z)&=z^6-3\,z^5-\frac{I^3+2\,J^2}{9\,\gD}z^4  
+\frac{26\,I^3+7\,J^2}{27\,\gD}z^3
-\frac{I^3+2\,J^2}{9\,\gD}z^2
-3\,z+1 	
\\
&=z^6-3\,z^5-\frac{3\,I^3+6\,J^2}{4\,I^3-J^2}z^4  
+\frac{26\,I^3+7\,J^2}{4\,I^3-J^2}z^3
-\frac{3\,I^3+6\,J^2}{4\,I^3-J^2}z^2 
-3\,z+1 ,	
  \end{split}
\end{equation}
where we recognize (slightly disguised) the absolute invariant $I^3/J^2$,
see \refE{E4abs}. 
We have $\gL(\gl)=0$, 
\ie,
{\multlinegap=0pt
\begin{multline}
\left(4{\it I^3} -{\it J^2} \right) {\gl}^{6}
+ \left( -12{\it I^3}+3{\it J^2} \right) {\gl}^{5}
+ \left( -3{\it I^3}-6{\it J^2} \right) {\gl}^{4}
\\
+ \left( 26{\it I^3}+7{\it J^2} \right) {\gl}^{3}
+ \left( -3{\it I^3}-6{\it J^2} \right) {\gl}^{2}
+ \left( -12{\it I^3}+3{\it J^2} \right) \gl
+4{\it I^3} -{\it J^2}
\\\shoveleft{
=
\bigpar{4{\gl}^{6}-12{\gl}^{5}-3{\gl}^{4}+26{\gl}^{3}-3{\gl}^{2}
-12\gl+4}I^3}
\\\shoveright{
-\bigpar{
{\gl}^{6}-3{\gl}^{5}+6{\gl}^{4}-7{\gl}^{3}+6{\gl}^{2}-3\gl+1}
J^2}
\\\shoveleft{
=0,\hfill } 
\end{multline}
}
which after a rearrangement yields
\begin{equation}\label{IJgl4}
\begin{split}
  \frac{J^2}{I^3}=&
\frac
{4\,{\gl}^{6}-12\,{\gl}^{5}-3\,{\gl}^{4}+26\,{\gl}^{3}-3\,{\gl}^{2}-12\,\gl+4}
{{\gl}^{6}-3\,{\gl}^{5}+6\,{\gl}^{4}-7\,{\gl}^{3}+6\,{\gl}^{2}-3\,\gl+1}
\\
=&
{\frac { \left( \gl-2 \right) ^{2} \left( 2\,\gl-1 \right) ^{2} 
\left( \gl+1 \right) ^{2}}{ \left( {\gl}^{2}-\gl+1 \right) ^{3}}}
\end{split}
\end{equation}
and equivalently, using \eqref{DIJ4} again,
\begin{equation}\label{IDgl4}
\begin{split}
  \frac{I^3}{\gD}=&
\frac
{{\gl}^{6}-3\,{\gl}^{5}+6\,{\gl}^{4}-7\,{\gl}^{3}+6\,{\gl}^{2}-3\,\gl+1}
{\gl^4-2\,\gl^3+\gl^2}
=
\frac
{ \left( {\gl}^{2}-\gl+1 \right) ^{3}}
{\gl^2\,(\gl-1)^2}
\end{split}
\end{equation}
and
\begin{equation}\label{JDgl4}
\begin{split}
  \frac{J^2}{\gD}=&
\frac
{4\,{\gl}^{6}-12\,{\gl}^{5}-3\,{\gl}^{4}+26\,{\gl}^{3}-3\,{\gl}^{2}-12\,\gl+4}
{\gl^4-2\,\gl^3+\gl^2}
=
\frac
{\left(\gl-2 \right)^{2} \left( 2\,\gl-1 \right)^{2} \left( \gl+1 \right)^{2}}
{\gl^2\,(\gl-1)^2}.
\end{split}
\end{equation}
We have really proved these formulas for the case of four distinct roots
in $F$, but it is easy to see that they hold also in the case of one or two
double roots (in this case $\gD=0$ and $\gl\in\set{0,1,\infty}$), 
and (by projective invariance) also if there is a single or
double root at $\infty$.
Note that if two of $I$, $J$ and $\gD$ vanish, then so do all three because
of \eqref{DIJ4}; this happens if and only 
there is a triple (or quadruple) root (see \refT{Tinv0}),
and then cross ratio $\gl$ is undefined. In this case thus both sides of 
\eqref{IJgl4}--\eqref{JDgl4} are undefined. Otherwise, if there is no triple
root, 
at most one of $I$, $J$ and $\gD$ vanishes, and
both sides of \eqref{IJgl4}--\eqref{JDgl4} are defined as elements of
$\Fx$ (they may be $\infty$, \viz{} when the denominator vanishes or, for
\eqref{IDgl4}--\eqref{JDgl4}, when $\gl=\infty$), and they
are equal.

\subsection{Further examples}\label{SS4further}
A simple example of higher invariants is $A(f^\nu,f^\nu)$ for $\nu\ge1$.
This has degree $2\nu$ and weight $4\nu$.
We have $A(f,f)=4I$ by \eqref{A4} and, for example, 
\begin{align}
  A\dgx{8}(f^2,f^2)
&=
82944\,a_0^{2}\,a_4^{2}-41472\,{a_0}\,{a_1}\,{a_3}\,{a_4}
+13824\,{a_0}\,a_2^{2}\,{a_4}
\notag\\& \qquad\qquad
+5184\,a_1^{2}\,a_3^{2}-3456\,{a_1}\,a_2^{2}\,{a_3}+576\,a_2^{4}
\notag\\&
=576\, I^2,
\\
  A\dgx{12}(f^3,f^3)&=
564480\,I^3-11520\,J^2.
\end{align}

The apolar invariant of the Hessian covariant is an invariant 
given by, see \eqref{H4} and \eqref{apolar},
\begin{equation}
  \begin{split}
 A\dgx4(H(f),H(f))&=
82944\,a_0^{2}\,a_4^{2}
-41472\,{a_0}\,{a_1}\,{a_3}\,{a_4}
+13824\,{a_0}\,a_2^{2}\,{a_4}
\\&\qquad\qquad\qquad
+5184\,a_1^{2}\,a_3^{2}
-3456\,{a_1}\,a_2^{2}\,{a_3}
+576\,a_2^{4} \qquad
\\&
=576\, I^2.	
  \end{split}
\raisetag\baselineskip
\end{equation}
Equivalently, 
$ A(\tH(f),\tH(f))=64\,I^{2}$.

Similarly, omitting the details,
\begin{align}
 A(\tH(f)^2,\tH(f)^2)&=147456\,I^{4},
  \\
 A(\tH(f)^3,\tH(f)^3)&=
2123366400\,{I}^{6}+188743680\,{I}^{3}{J}^{2}-47185920\,{J}^{4}
\notag
\\&
= 47185920(5\,I^3+J^2)(9\,I^3-J^2),
\end{align}
with the coefficients $147456=2^{14}3^2$ and 
$47185920=2^{20}3^2 5$.

Equivalently,
\begin{align}
 A(H(f)^2,H(f)^2)&=11943936\,I^{4},
\\
 A(H(f)^3,H(f)^3)&
= 34398535680\, (5\,I^3+J^2)(9\,I^3-J^2),  
\end{align}
where 
$11943936=2^{14}3^6$
and
$34398535680=2^{20}3^8 5$.

The (joint) apolar invariant $A(H(f),f)$ is an invariant 
given by, see \eqref{H4} and \eqref{apolar},
\begin{equation}\label{AHf4}
  \begin{split}
A(H(f),f)
&=
1728\,{a_0}\,{a_2}\,{a_4}
-648\,{a_0}\,a_3^{2}
-648\,a_1^{2}\,{a_4}
+216\,{a_1}\,{a_2}\,{a_3}
-48\,a_2^{3}
\\&
=24 J,
  \end{split}
\raisetag{12pt}
\end{equation}
see \eqref{J4}.
This has degree 3 and weight 6.
Equivalently,
$A(\tH(f),f)=8J$.
We can also form, for example,
\begin{equation}
  A\dgx{8}\bigpar{f\tH(f),f\tH(f)}=192\,(20\,I^3+7J^2).
\end{equation}

Further invariants (\etc) of the Hessian covariant are
\begin{align}
    I(\tH(f))&=16\,I^2,
\\
    J(\tH(f))&=64\,J^2-128\,I^3=64\bigpar{J^2-2\,I^3},
\\
    \gD(\tH(f))&=2^{12} J^2\,\gD,
\\
  P(\tH(f))&=64\,a_0\,J-16\,I\,P,
\\
  Q(\tH(f))&=-64\,J\,Q,
\\
  \tH(\tH(f))&=64\,J\,f-16\,I\,\tH,
\\
  \tJH(\tH(f))&=-64\,J\,\tJH.
\end{align}

The apolar invariant $A(\tJH,\tJH)$ 
of the sextic polynomial $\tJH(f)$
is an invariant of degree 6 and weight 12
given by 
\begin{equation}\label{F6D4}
  A\dgx6(\tJH(f),\tJH(f))=960\,\gD.
\end{equation}
The discriminant $\gD(\tJH)$ is an invariant of degree 30 and
weight 60 
given by 
\begin{equation}\label{DF6D4}
  \gD\dgx6(\tJH(f))=-2^{18}\,\gD^5.
\end{equation}

We calculate also the resultants of $f$, $\tH(f)$ and $\tJH(f)$:
\begin{align}
  R\xpar{f,\tH}&=81\gD^2, \\
  R\xpar{f,\tJH}&=\gD^3, \\
  R\xpar{\tH,\tJH}&=2^{12}\gD^3J^2, 
\end{align}
where the first also follows by \refE{EResHess}.

For the seminvariants in Examples \ref{Eredb}--\ref{Eredc}, we have,
recalling $\ar_2=p=P/8a_0$, $\ar_3=q=Q/8a_0^2$ and $\ar_4=r=R/256a_0^3$, see
\eqref{red4} and \eqref{p4P}--\eqref{r4R}, and using also \eqref{RIP4},
\begin{align}
  a_0^2S_2&=-\frac14 P, \label{s42}\\
  a_0^3S_3&=-\frac38Q,\label{s43}\\
  a_0^4S_4&=-\frac1{64}R+\frac1{32}P^2
=-\frac13a_0^2I+\frac7{192}P^2
,\label{s44}
\intertext{and}
 a_0^2 \chi_2&=-\frac1{16}P,
\label{achi42}\\
a_0^3 \chi_3&=-\frac3{32}Q,
\label{achi43}\\
a_0^4  \chi_4&
=-\frac1{256}R-\frac{1}{256}P^2
=-\frac1{384}\bigpar{32a_0^2I+P^2}.
\label{achi44}
\end{align}

\subsection{Vanishing invariants and covariants}\label{SS4=0}

Let $f$ be a quartic polynomial, with roots $\xi_1,\dots,\xi_4$.
(The results extend to the case $a_0=0$ when one or several roots are
$\infty$ with no or trivial modifications.)

Since $I$ and $J$ form a basis for the invariants (\refT{T4bas}), 
\refT{Tinv0} shows that
$I=J=0$ if and only if $f$ has a triple root. Moreover, since
$\gD=\frac{4}{27}I^3-\frac1{27}J^2$ by \eqref{DIJ4}, we have the following:

\begin{theorem}\label{T4IJD=000}
Let $f$ be a quartic polynomial.
  If $f$ has
a triple (or quadruple) root then $\gD=I=J=0$.

Conversely,
if there is no triple root, then at most one of $\gD$, $I$ and $J$ vanishes.
\end{theorem}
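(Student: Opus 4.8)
The plan is to deduce both halves from three facts already in hand: the characterization \refT{Tinv0} of the polynomials on which every non-constant invariant vanishes, the statement (\refT{T4bas}) that $I$ and $J$ generate all invariants of a quartic, and the syzygy \eqref{DIJ4}, i.e.\ $27\gD=4I^3-J^2$. The degenerate case $a_0=0$ (roots at $\infty$) needs no separate treatment, since \refT{Tinv0} is already stated projectively; this is the content of the remark preceding the theorem, cf.\ \refE{Ediskr-1}.

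For the first assertion I would simply observe that $I$, $J$ and $\gD$ are non-constant invariants (of degrees $2$, $3$ and $6$ respectively). Hence, if $f$ has a root of multiplicity $\ge 3$, that is, a root of multiplicity $>4/2=2$, then \refT{Tinv0} immediately forces $I=J=\gD=0$. (One could instead get $\gD=0$ from \refT{TD=0}, or from \eqref{DIJ4} once $I=J=0$, but this is not needed.)

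For the converse I would prove the contrapositive: if at least two of $\gD$, $I$, $J$ vanish, then $f$ has a triple or quadruple root. First, a short case check on \eqref{DIJ4} shows that any two vanishing forces the third: $I=J=0\Rightarrow\gD=0$; $I=\gD=0\Rightarrow J^2=4I^3=0$; $J=\gD=0\Rightarrow 4I^3=J^2=0\Rightarrow I=0$. In every case $I=J=0$. Now invoke \refT{T4bas}: every invariant of $f$ is an isobaric polynomial in $I$ and $J$, and such a polynomial of positive weight has vanishing constant term, so every non-constant invariant of $f$ vanishes. \refT{Tinv0} then yields a root of $f$ of multiplicity $>2$, which is exactly what is claimed.

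I do not expect a real obstacle here. The two points that merit a line of care are the three-way case analysis on the syzygy, and the passage from ``$\{I,J\}$ is a basis'' and ``isobaric of positive weight has zero constant term'' to ``$I=J=0$ implies all non-constant invariants vanish'', which is precisely the hypothesis needed to apply \refT{Tinv0}.
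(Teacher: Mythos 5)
Your proof is correct and follows essentially the same route as the paper: the paper also derives the equivalence ``$I=J=0$ iff $f$ has a triple root'' from \refT{T4bas} together with \refT{Tinv0}, and then uses the syzygy \eqref{DIJ4} to tie $\gD$ to $I$ and $J$. The only difference is that you spell out the three-way case check on the syzygy explicitly, which the paper leaves implicit.
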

 
If there is no triple root, the cross ratio $\crx\xi$ of the roots is
well-defined by \refSS{SS4cr}, and the vanishing of the basis invariants $I$
and $J$, as well as $\gD$, can be characterised by this cross ratio. 

\begin{theorem}\label{T4IJD=0}
Let $f$ be a quartic polynomial with roots $\xi_1,\xi_2,\xi_3,\xi_4$, and
assume that there is no triple (or quadruple) root.
\begin{romenumerate}
\item 
$I=0$ if and only if the cross-ratio
$[\xi_1,\xi_2;\xi_3,\xi_4]=\frac12\pm\frac{\sqrt3}{2}\ii$,
\ie, if and only if the roots form a equianharmonic (self-apolar) quadruple.
\item 
$J=0$ if and only if the cross-ratio
$[\xi_1,\xi_2;\xi_3,\xi_4]\in\set{-1,\frac12,2}$,
\ie, if and only if the roots form a harmonic quadruple.
\item 
$\gD=0$ if and only if the cross-ratio
$[\xi_1,\xi_2;\xi_3,\xi_4]\in\set{0,1,\infty}$,
\ie, if and only there is a double root.
\end{romenumerate}
\end{theorem}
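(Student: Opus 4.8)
The plan is to read off all three equivalences directly from the cross-ratio formulas \eqref{IDgl4} and \eqref{JDgl4} established in \refSS{SS4cr}, together with the fact (\refT{TD=0}) that $\gD(f)=0$ exactly when two roots coincide. Throughout I would write $\gl\=\crx\xi$, which is well-defined in $\Fx$ since $f$ has no triple (or quadruple) root, and I would keep in mind that by \refT{T4IJD=000} at most one of $\gD$, $I$, $J$ vanishes.

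First I would dispose of (iii): by \refT{TD=0}, $\gD(f)=0$ iff two of $\xi_1,\dots,\xi_4$ coincide, and by the discussion of the cross ratio in \refSS{SS4cr} this happens iff $\gl\in\set{0,1,\infty}$.

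For (i) I would argue as follows. If $I=0$, then $\gD\neq0$: otherwise \eqref{DIJ4} gives $J=0$ as well, so $f$ would have a triple root by \refT{T4IJD=000}, contrary to hypothesis. Since $\gD\neq0$ we have $\gl\notin\set{0,1,\infty}$, so $\gl^2(\gl-1)^2\neq0$ and \eqref{IDgl4} reads $0=I^3/\gD=(\gl^2-\gl+1)^3/\bigpar{\gl^2(\gl-1)^2}$, whence $\gl^2-\gl+1=0$, \ie, $\gl=\tfrac12\pm\tfrac{\sqrt3}2\ii$. Conversely, $\gl^2-\gl+1=0$ rules out $\gl\in\set{0,1,\infty}$, so $f$ has no double root, $\gD\neq0$, and \eqref{IDgl4} gives $I^3/\gD=0$, hence $I=0$. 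To finish I would note that $\set{\tfrac12+\tfrac{\sqrt3}2\ii,\tfrac12-\tfrac{\sqrt3}2\ii}$ is precisely the exceptional $S_3$-orbit $\set{\tfrac12\pm\tfrac{\sqrt{-3}}2}$ identified in \refSS{SS4cr}, so the stated condition does not depend on the ordering of the roots; this is the equianharmonic (self-apolar) configuration. Part (ii) is the same computation with \eqref{JDgl4} in place of \eqref{IDgl4}: $J=0$ forces $\gD\neq0$ (else $I=0$ too by \eqref{DIJ4}, giving a triple root), and then $J^2/\gD=(\gl-2)^2(2\gl-1)^2(\gl+1)^2/\bigpar{\gl^2(\gl-1)^2}=0$ yields $\gl\in\set{-1,\tfrac12,2}$, with the converse and the order-independence (the harmonic $S_3$-orbit $\set{-1,\tfrac12,2}$) handled exactly as before.

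I do not expect a real obstacle: the whole substance sits in the identities \eqref{IDgl4}--\eqref{JDgl4}, which are already proved, and the only care needed is the elementary bookkeeping of which of $\gD$, $I$, $J$ can vanish simultaneously, which \refT{T4IJD=000} and \eqref{DIJ4} settle. If anything, the one point I would state carefully is that in each case the hypothesis ``no triple root'' is exactly what keeps the relevant denominator $\gl^2(\gl-1)^2$ nonzero, so that the quotient identities can be used as honest equalities rather than merely as identities in $\Fx$.
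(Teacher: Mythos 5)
Your proposal is correct and follows essentially the same route as the paper: the paper's proof likewise notes that $\gl$ is well-defined, that at most one of $\gD$, $I$, $J$ vanishes by \refT{T4IJD=000}, and then reads the three equivalences off \eqref{IJgl4}--\eqref{JDgl4} (with (iii) also following directly from \refT{TD=0}). You have merely spelled out in more detail the bookkeeping that the paper leaves implicit, in particular why the denominator $\gl^2(\gl-1)^2$ is nonzero in cases (i) and (ii).
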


Note that the three conditions use the three exceptional orbits of cross
ratios, see \refSS{SS4cr}.

\begin{proof}
  When there is no triple root, the cross ratio 
$\gl=\crx\xi$
is well-defined by \refSS{SS4cr} and 
at most one of $I$, $J$ and $\gD$ vanishes by \refT{T4IJD=000}; 
the results now follow from
\eqref{IJgl4}--\eqref{JDgl4}. (The result for $\gD$ is of course an
immediate consequence of \eqref{TD=0}.)
\end{proof}

\refT{T4IJD=0}(ii) 
also follows from \eqref{J4xifact}, which shows that $J=0$ if
and only if 
one of the three
factors in the brackets there vanishes, or equivalently 
that one of the three cross-ratios 
\begin{align*}
\frac{\left( {\xi_1}-{\xi_3} \right)  \left( {\xi_2}-{\xi_4} \right) }
{\left( {\xi_1}-{\xi_4} \right)  \left( {\xi_2}-{\xi_3} \right)},  
&&
\frac{ \left( {\xi_1}-{\xi_2} \right)  \left({\xi_3}-{\xi_4} \right)} 
{\left( {\xi_1}-{\xi_4} \right)  \left( {\xi_3}-{\xi_2} \right)  },
&&
\frac{\left( {\xi_1}-{\xi_2} \right)  \left( {\xi_4}-{\xi_3} \right)} 
{\left( {\xi_1}-{\xi_3} \right)  \left( {\xi_4}-{\xi_2} \right)}
&
\end{align*}
equals $-1$. 
(These are $[\xi_1,\xi_2;\xi_3,\xi_4]$,
$[\xi_1,\xi_3;\xi_2,\xi_4]$ and
$[\xi_1,\xi_4;\xi_2,\xi_3]$.)

\refC{CGund=0} gives another interpretation of $J=0$, since $J$ is a
multiple of $G_2$ (the catalecticant when $n=4$):

\begin{theorem}
  The following are equivalent for a quartic polynomial $f$:
  \begin{romenumerate}
  \item $J=0$.
  \item 
$f$ belongs to the closure $\bcP_{4,2}$ of the set
	$\cP_{4,2}\=\set{c_1(x-x_1)^2+c_2(x-x_2)^4}$.
  \item $f$ has one of the forms
$c_1(x-x_1)^4+c_2(x-x_2)^4$, 
$c_1(x-x_1)^4+c_2$, 
$c_1(x-x_1)^4+c_2(x-x_1)^3$, 
$c_1+c_2x$. (The last two comprise the cases when $f$ has a triple root,
	finite or infinite).
  \end{romenumerate}
\end{theorem}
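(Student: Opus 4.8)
The plan is to obtain everything as a specialization of \refT{TGund=0} to $n=4$, $m=2$. The key link is \eqref{gund24}, which gives $G_2(f)=32\,J$ (equivalently $\Han(f)=J/432$); thus the condition $J=0$ of (i) is literally the condition $G_m(f)=0$, resp.\ $\Han(f)=0$, appearing in \refT{TGund=0} and \refC{CGund=0} with $m=2$. With this translation, the equivalence (i)$\iff$(ii) of the present statement is nothing but the equivalence of parts (i) and (ii) of \refT{TGund=0} (or \refC{CGund=0}) for $n=4$, $m=2$, since $\bcP_{4,2}$ here is, by definition, the set $\bcP_{n,m}$ there; so no further work is needed for (i)$\iff$(ii).

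For (i)$\iff$(iii) I would use the equivalence of parts (i) and \ref{tgundsum} of \refT{TGund=0}: $J=0$ holds if and only if
\[
f=\sum_{i=1}^{l}\sum_{j=0}^{m_i-1}c_{ij}\,(x-\xi_i)^{4-j}
\]
for some $l\le 2$, integers $m_i\ge 1$ with $\sum_{i=1}^{l}m_i=2$, coefficients $c_{ij}\in F$, and points $\xi_i\in\Fx$, with the convention $(x-\infty)^{4-j}=x^{j}$. It then only remains to enumerate the possible partitions $(l;m_1,\dots,m_l)$ of $2$. If $l=2$, then $m_1=m_2=1$ and $f=c_{10}(x-\xi_1)^4+c_{20}(x-\xi_2)^4$ with $\xi_1,\xi_2\in\Fx$: when both $\xi_i$ are finite this is the first form of (iii), and when one of them (after relabelling, $\xi_2$) equals $\infty$ it becomes, via $(x-\infty)^4=1$, the second form $c_1(x-x_1)^4+c_2$. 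If $l=1$, then $m_1=2$ and $f=c_{10}(x-\xi_1)^4+c_{11}(x-\xi_1)^3$: finite $\xi_1$ gives the third form, and $\xi_1=\infty$ gives $c_{10}+c_{11}x$, the fourth form. Conversely, each of the four polynomials listed in (iii) is manifestly of the displayed shape with $m=2$ (e.g.\ $c_1(x-x_1)^4+c_2(x-x_1)^3$ corresponds to $l=1$, $\xi_1=x_1$, $m_1=2$), hence satisfies $J=0$. I would finish by remarking that the last two forms are precisely those with a triple root, $(x-x_1)^3\bigl(c_1(x-x_1)+c_2\bigr)$ having a finite triple root and a polynomial $c_1+c_2x$ of ``degree $4$'' having a triple root at $\infty$, consistently with \refT{Tinv0} and \refT{T4IJD=000}.

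Since the whole argument is a direct reading-off of \refT{TGund=0}, there is essentially no analytic content and no serious obstacle. The only point that requires care is the handling of roots at infinity together with the convention $(x-\infty)^{4-j}=x^{j}$, which is exactly what splits the single ``generic'' case $c_1(x-x_1)^4+c_2(x-x_2)^4$ into the four cases of (iii); getting that bookkeeping right is the main thing.
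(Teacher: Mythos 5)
Your proposal is correct and follows exactly the route the paper intends: the theorem is stated there as an immediate consequence of \refC{CGund=0} (equivalently \refT{TGund=0} with $n=4$, $m=2$) via the identification $G_2(f)=32\,J$ from \eqref{gund24}, and your enumeration of the partitions of $m=2$, including the roots at infinity with the convention $(x-\infty)^{4-j}=x^j$, correctly fills in the case analysis that the paper leaves implicit.
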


For the covariants $H$ and $\JH$ we have the following.
The first is just an instance of the general \refT{THess=0}.
\begin{theorem}\label{T4H=0} 
The following are equivalent for a quartic polynomial $f$.
  \begin{romenumerate}
  \item $H(f)=0$. 
  \item $f$ has a single, quadruple root, \ie, $\xi_1=\xi_2=\xi_3=\xi_4$.
  \item 	$f(x)=c(x-x_0)^4$ for some $c$ and $x_0$.
  \end{romenumerate}
\end{theorem}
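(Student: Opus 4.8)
The plan is to derive \refT{T4H=0} directly from the general \refT{THess=0}, of which it is simply the case $n=4$. By \refT{THess=0}, the Hessian covariant of a polynomial of degree $n$ vanishes identically if and only if $\xi_1=\dots=\xi_n$, equivalently if and only if $f(x)=a_0(x-\xi)^n$ for some $a_0$ and $\xi$; taking $n=4$ this is exactly the chain of equivalences (i)$\iff$(ii)$\iff$(iii). The implication (ii)$\iff$(iii) is itself only the observation that, by \eqref{sofie} (or a one-line binomial expansion), $\xi_1=\xi_2=\xi_3=\xi_4=x_0$ means $f(x)=a_0\prod_i(x-\xi_i)=a_0(x-x_0)^4$, with $c=a_0$.

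First I would recall briefly why \refT{THess=0} holds, so that the argument is visibly self-contained: it follows from \eqref{hjf'}, which expresses $H(f)$ as a Jacobian of $f'$ and $f$, together with \refT{TJac=0}; hence no fresh computation with the explicit form \eqref{H4} is needed. Then I would simply instantiate $n=4$. The one bookkeeping point is the standing assumption $a_0\neq0$ in this section (see the remark opening \refSS{SS4=0}): when $a_0=0$ one passes to the associated binary form, for which $H$ is again a covariant, and \refT{THess=0} becomes the statement $f(x,y)=c(ax+by)^4$ recorded just after that theorem; translating back to polynomials yields either $f(x)=c(x-x_0)^4$ as in (iii) or $f$ a nonzero constant (a quadruple root at $\infty$), handled by projective invariance exactly as in \refE{Ediskr-1}.

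I do not expect any genuine obstacle, since everything is subsumed by the already-established \refT{THess=0}; the only content is the specialization $n=4$ and, optionally, the degenerate case $a_0=0$. Should a reader prefer a self-contained check instead, one can alternatively read off from \eqref{H4} that $H(f;x)\equiv0$ forces, from its leading coefficient, $24a_0a_2-9a_1^2=0$ (\ie{} $P=0$, \cf{} \eqref{HP4}), and then, by examining the remaining coefficients of \eqref{H4}, that $f$ is proportional to $(x-x_0)^4$; but routing through \refT{THess=0} is shorter and is the intended proof.
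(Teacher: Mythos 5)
Your proposal is correct and matches the paper exactly: the paper gives no separate proof of \refT{T4H=0}, stating only that it ``is just an instance of the general \refT{THess=0}'', which is precisely your specialization to $n=4$ (and your recollection of how \refT{THess=0} follows from \eqref{hjf'} and \refT{TJac=0} is also the paper's argument). Nothing further is needed.
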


\begin{theorem} \label{T4JH=0}
The following are equivalent for a quartic polynomial $f$.
  \begin{romenumerate}
  \item $\JH(f)=0$.
  \item Every root is (at least) a double root.
  \item The roots coincide in two pairs $\xi_1=\xi_2$ and $\xi_3=\xi_4$
(up to labelling); this includes the case when all four roots coincide.
  \item $f=cg^2$ for some quadratic polynomial $g$.
  \end{romenumerate}
\end{theorem}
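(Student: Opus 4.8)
The plan is to establish the cycle \textup{(iv)}$\Rightarrow$\textup{(i)}$\Rightarrow$\textup{(ii)}$\Rightarrow$\textup{(iii)}$\Rightarrow$\textup{(iv)}, of which the last two implications are elementary. A quartic has four roots counted with multiplicity (allowing roots at $\infty$, by projective invariance as used throughout \refSS{SS4=0}); if every root has multiplicity at least $2$, the only possibilities are one quadruple root or two double roots, which is precisely \textup{(iii)}. And if the roots are $\xi_1,\xi_1,\xi_3,\xi_3$ (with $\xi_1$ possibly equal to $\xi_3$, or infinite), then $f=a_0\bigpar{(x-\xi_1)(x-\xi_3)}^2$, so \textup{(iv)} holds with $g=(x-\xi_1)(x-\xi_3)$ and $c=a_0$; the converse inclusion \textup{(iv)}$\Rightarrow$\textup{(iii)} is just the factorization of $g$ into linear factors.

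For \textup{(iv)}$\Rightarrow$\textup{(i)} I would compute the Hessian directly. If $f=cg^2$ with $g$ a quadratic, then $f'=2cgg'$ and $f''=2c(g'^2+gg'')$, so by \eqref{hessianp} with $\nn=4$,
\[
  H(f;x)=12ff''-9(f')^2=12c^2g^2\bigpar{2gg''-g'^2}.
\]
Here $2gg''-g'^2$ is the Hessian covariant of the quadratic $g$, hence by \eqref{HD2} equal to the constant $-\gD(g)$; therefore $H(f)=-12\,c\,\gD(g)\,f$ is a scalar multiple of $f$. Since the Jacobian determinant is linear in each argument and $J(f,f)=0$ (\refE{EJacobian}), it follows that $\JH(f)=J\bigpar{f,H(f)}=-12\,c\,\gD(g)\,J(f,f)=0$. (When $\gD(g)=0$ the quadratic $g$ has a double root, $f=c(x-x_0)^4$, and $H(f)=0$ by \refT{THess=0}, so $\JH(f)=0$ a fortiori.)

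For the essential implication \textup{(i)}$\Rightarrow$\textup{(ii)}, suppose $\JH(f)=J\bigpar{f,H(f)}=0$; by projective invariance we may assume $a_0\neq0$. If $H(f)=0$ then $f=c(x-x_0)^4$ by \refT{THess=0}, which has a quadruple root and hence satisfies \textup{(ii)}. If $H(f)\neq0$, then $f$ and the Hessian covariant $H(f)$ are forms of the same order $4$, so the equal-degree case of \refT{TJac=0} gives $H(f)=\gl f$ for some constant $\gl\neq0$. Let $\xi$ be a root of $f$ of multiplicity $m$; necessarily $1\le m\le 3$, since $m=4$ would force $H(f)=0$. Writing $f=(x-\xi)^m h$ with $h(\xi)\neq0$, a short computation from \eqref{hessianp} shows $H(f;x)=(x-\xi)^{2m-2}\bigpar{3m(m-4)h(\xi)^2+O(x-\xi)}$, so $\xi$ is a root of $H(f)$ of multiplicity \emph{exactly} $2m-2$. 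Comparing with $H(f)=\gl f$ forces $2m-2=m$, i.e.\ $m=2$; thus every root of $f$ is double, which is \textup{(ii)}.

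The step I expect to be the main obstacle is the careful bookkeeping of the projective picture in \textup{(i)}$\Rightarrow$\textup{(ii)}: one must treat $f$ and $H(f)$ as binary forms of order $4$ — rather than as ordinary polynomials — in order to apply \refT{TJac=0} cleanly, so that the conclusion $H(f)=\gl f$ remains valid even when the leading coefficient $H_0=3P$ of $H(f;x)$ vanishes, i.e.\ when $H(f)$ has a root at $\infty$. This rests on the projective invariance of the whole construction, exactly as in the proof of \refT{TJac=0}. The multiplicity-of-vanishing computation for the Hessian is routine, but the argument does hinge on the coefficient $3m(m-4)$ being nonzero precisely for $0<m<4$, so that must be verified.
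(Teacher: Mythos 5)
Your proof is correct, but it takes a genuinely different route from the paper's. The paper deduces (i)$\Rightarrow$(ii) by first invoking the identity $A(\tJH(f),\tJH(f))=960\,\gD$ from \eqref{F6D4} to conclude $\gD=0$, then normalizing the resulting double root to $0$ so that $f=a_0x^4+a_1x^3+a_2x^2$, and reading off from the explicit expansion \eqref{JH4} that $\tJH(f)=(a_1^2-4a_0a_2)(a_1x^6+2a_2x^5)$, which forces the quadratic cofactor $a_0x^2+a_1x+a_2$ to have vanishing discriminant; the converse directions are likewise handled by this reduced formula. You instead argue structurally: from $J(f,H(f))=0$ and \refT{TJac=0} you get $H(f)=\gl f$ (or $H(f)=0$, handled by \refT{THess=0}), and then the local computation showing that a root of $f$ of multiplicity $m\in\{1,2,3\}$ is a root of $H(f)$ of multiplicity exactly $2m-2$ (with leading coefficient $3m(m-4)h(\xi)^2\neq0$) forces $m=2$. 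Your (iv)$\Rightarrow$(i) via $H(cg^2)=-12c\,\gD(g)\,cg^2$ also matches the identity $H(f^2)=-12\gD f^2$ recorded in \refSS{SS2further}. What your approach buys is independence from the displayed formulas \eqref{JH4} and \eqref{F6D4} and a mechanism (the $2m-2$ multiplicity count for the Hessian) that is not special to degree $4$; what the paper's approach buys is brevity, since those formulas are already on the page. Your caution about applying \refT{TJac=0} projectively is warranted but resolves itself: once $a_0\neq0$ and $H(f)$ is proportional to $f$, the form $H(f)$ automatically has no root at infinity either, so no extra case arises.
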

\begin{proof}
It is easy to see that (ii), (iii) and (iv) are equivalent.

Suppose now (i), \ie, $\JH(f)=0$. By \eqref{F6D4}, then $\gD=0$, so $f$ has a
double root $\xi$. By projective invariance, we may assume that $\xi=0$, so
$f(x)=a_0x^4+a_1x^3+a_2x^2$. For $f$ of this form, with $a_3=a_4=0$,
\eqref{JH4} reduces to
\begin{equation}\label{qk}
  \begin{split}
\tJH(f)&=	
 \left(-4\,{a_0}\,{a_1}\,{a_2}  +a_1^{3} \right) {x}^{6}
+ \left( 
  -8\,{a_0}\,a_2^{2}+2\,a_1^{2}\,{a_2} \right) {x}^{5}
\\&
=\bigpar{a_1^2-4\,a_0\,a_2}\bigpar{a_1x^6+2\,a_2x^5}.
  \end{split}
\end{equation}
Hence either $a_1^2-4a_0a_2=0$ or $a_1=a_2=0$; in both cases
$\gD\dgx2(a_0x^2+a_1x+a_2)=a_1^2-4a_0a_2=0$.
Hence, $a_0x^2+a_1x+a_2$ has a double root $\xi$, and $f$ has the roots
$0,0,\xi,\xi$. 

Conversely, if $f$ has only double roots, we may again by projective
invariance assume that 0 is a root, and then
$f(x)=a_0x^4+a_1x^3+a_2x^2$, where we now know that
also $a_0x^2+a_1x+a_2$ has a double root, and thus its discriminant
$a_1^2-4a_0a_2=0$. Hence, $\JH(f)=0$ by \eqref{qk}.
\end{proof}

\subsection{Roots and resolvent of a real quartic}\label{SS4rr}

Consider a real quartic $f$, with $a_0\neq0$.
Then $f$ has either 0, 2 or 4 real roots (counted with multiplicities).
The discriminant partly discriminates between these cases, by the following
simple and classic result, which is a simple consequence of \eqref{dd},
see \refR{RD}.
(In this subsection, ``complex'' means non-real.)

\begin{theorem}
  \label{T4realroots1}
  Let $f$ be a real quartic polynomial.
  \begin{romenumerate}
\item \mbox{$\gD(f)>0 \iff$}
  $f$ has either 4 distinct real roots, \orr 4 complex
  roots in two conjugate pairs.
\item $\gD(f)<0 \iff$
  $f$ has 2 real roots and 2 conjugate complex roots.
\item $\gD(f)=0 \iff$ $f$ has a double (or triple or quadruple) root.
In this case,
  $f$ has 1 quadruple real root,
\orr 2 real roots, one triple and one single,
\orr 2 double real roots,
\orr 3 real roots, one double and two single,
\orr 1 double real root and 2 conjugate complex roots,
\orr 2 conjugate complex double roots.
  \end{romenumerate}
\end{theorem}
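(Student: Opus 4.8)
The plan is to reduce everything to the product formula \eqref{dd} for the discriminant, combined with the sign count in \refR{RD} and the vanishing criterion \refT{TD=0}; no new computation with the explicit polynomial \eqref{discr4} is needed. First I would dispose of the case $\gD(f)\neq0$. By \refT{TD=0} this is equivalent to $f$ having four distinct roots. Since $f$ is real of degree $4$, the non-real roots occur in conjugate pairs, so $f$ has $4-2m$ real roots and $m$ conjugate complex pairs, with $m\in\{0,1,2\}$. By \refR{RD} (itself an immediate consequence of \eqref{dd}), $\sign\gD(f)=(-1)^m$. Hence $\gD(f)>0$ precisely when $m\in\{0,2\}$, i.e.\ when there are $4$ distinct real roots or two conjugate complex pairs, giving (i); and $\gD(f)<0$ precisely when $m=1$, i.e.\ two real and two conjugate complex roots, giving (ii).

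For (iii), \refT{TD=0} already says $\gD(f)=0$ iff $f$ has a repeated root, so only the sub-classification has to be checked. I would enumerate the partitions of $4$ according to root multiplicities, namely $4$, $3+1$, $2+2$, $2+1+1$, and for each one determine which real/complex splittings are compatible with the conjugate symmetry of the root multiset. The key observation is that a non-real root of multiplicity $k$ forces its conjugate to be a \emph{distinct} root of the same multiplicity $k$; so whenever such a conjugate copy cannot fit within the remaining total multiplicity, the root in question must be real. Running through the list: the partition $4$ forces a real quadruple root; $3+1$ forces a real triple root and a real simple root (the simple one cannot be complex, as its conjugate would be a further distinct root); $2+2$ allows either two real double roots or one conjugate pair of double roots; and $2+1+1$ allows a real double root together with either two real simple roots or a conjugate pair of simple roots. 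These are exactly the six cases displayed in (iii), and they are mutually exclusive, so the classification is complete.

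I do not anticipate a real obstacle: the analytic content is entirely in \eqref{dd}/\refR{RD}, which are already available, and (iii) for $\gD$ itself is just \refT{TD=0}. The only step demanding a little care is the bookkeeping in (iii) — verifying that the six listed cases are exhaustive and that each is consistent with the reality of $f$ — but this is settled by the conjugate-symmetry argument above applied to the four multiplicity partitions of $4$.
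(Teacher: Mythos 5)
Your proposal is correct and follows exactly the route the paper indicates (the theorem is stated there as "a simple consequence of \eqref{dd}, see \refR{RD}", with no further proof given): parts (i) and (ii) from the sign count $\sign\gD=(-1)^m$ of \refR{RD} applied to the four distinct roots, and part (iii) from \refT{TD=0} plus the conjugate-symmetry bookkeeping over the multiplicity partitions $4$, $3+1$, $2+2$, $2+1+1$. Your enumeration of the six sub-cases in (iii) is complete and matches the statement.
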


To completely
distinguish between the different cases we employ further seminvariants
and covariants. (In the following theorem the roots are assumed to be
distinct except as explicitly stated.)
Note that $I\ge0$ when $\gD\ge0$ by \eqref{DIJ4}, so $\sqrt I\ge0$  in
this case.

\begin{theorem}
  \label{T4realroots2}
  Let $f$ be a real quartic polynomial.
  \begin{romenumerate}
\item \label{rr1111}
  $f$ has 4  real roots $\iff$
$\gD>0$, $P\le0$ and $P^2-16a_0^2I\ge0$
$\iff$
$\gD>0$ and  $P\le-4a_0\sqrt{I}$.
\item \label{rrc22}
$f$ has 2 pairs of conjugate complex  roots 
$\iff$ 
$\gD>0$ and either  $P>0$ or $P^2-16a_0^2I<0$
$\iff$
$\gD>0$ and  $P>-4a_0\sqrt{I}$.
\item \label{rr11c2}
  $f$ has 2  real roots and 2 conjugate complex roots  $\iff$
  $\gD<0$. 
\item \label{rr4} 
  $f$ has 1 quadruple real root  
$\iff \gD=I=J=P=0$
$\iff$ $H(x)\equiv0$.
In this case also $Q=0$ and $\JH(x)\equiv0$.
\item \label{rr31}
$f$ has 1 triple and 1 single real root 
$\iff \gD=I=J=0$ but $P\neq0$. In this case $P<0$, $Q\neq0$,
  $H(x)\not\equiv0$, $\JH(x)\not\equiv0$.
\item \label{rr22}
$f$ has 2 double real roots  
$\iff \gD=P^2-16a_0^2I=0$ and $P<0$
$\iff\JH(x)\equiv0$ and $P<0$.
In this case also $Q=0$.
\item \label{rrc4}
$f$ has 2 conjugate complex double roots   
$\iff \gD=P^2-16a_0^2I=Q=0$ and $P>0$
$\iff\JH(x)\equiv0$ and $P>0$.
\item \label{rr211}
$f$ has  3 real roots, one double and two single  
$\iff \gD=0$, $I>0$, $P<0$ and $P^2-16a_0^2I>0$.
\item \label{rr2c2}
$f$ has 1 double real root and 2 conjugate complex roots  
$\iff \gD=0$ and either  $P^2-16a_0^2I<0$ or $P>0$ but not 
$P^2-16a_0^2I=Q=0$.
  \end{romenumerate}
\end{theorem}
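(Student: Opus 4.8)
The plan is to organise the nine cases by the sign of $\gD$, using \refT{T4realroots1} as the skeleton, and to extract all the finer information from the cubic resolvent $\Res(f)$. Three cases are cheap: part~\ref{rr11c2} is exactly \refT{T4realroots1}(ii); part~\ref{rr4} is the instance $H(f)\equiv0$ of \refT{T4H=0}, since for $f=c(x-x_0)^4$ the formulas \eqref{P4}, \eqref{Q4}, \eqref{I4}, \eqref{J4}, \eqref{JH4} immediately give $P=Q=I=J=0$ and $\JH(f)\equiv0$, while conversely $I=J=0$ forces a root of multiplicity $>2$ by \refT{Tinv0} and \refT{T4bas}, and among such $f$ the vanishing $P=0$ singles out the quadruple root (if the triple root is finite, translate it to $0$, so $f=a_0x^4+a_1x^3$ with $a_1\ne0$, whence $P=-3a_1^2<0$, $Q=a_1^3\ne0$, and $H,\JH\not\equiv0$ by \eqref{H4}, \eqref{JH4} — this is also the content of part~\ref{rr31}).

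The engine for the other six cases is the resolvent. By \refR{Rgam} the roots of $\Res(f)$ are $\gam_1^2,\gam_2^2,\gam_3^2$ with $\gam_i$ the half-sums \eqref{gam1}--\eqref{gam3}, and by \eqref{Res4} (equivalently \eqref{gDres4}--\eqref{Qres4}) the elementary symmetric functions of these roots are, up to positive factors, $-P$, $P^2-R$, and $Q^2$; here $P^2-R$ has the same sign as $P^2-16a_0^2I$ by \eqref{P2-R}, and $Q^2\ge0$, so the product of the roots of $\Res(f)$ is always $\ge0$. I would then record, for each root pattern of a real quartic, the reality and sign of $\gam_1^2,\gam_2^2,\gam_3^2$: all $\xi_j$ real $\Rightarrow$ all $\gam_i$ real $\Rightarrow$ all $\gam_i^2\ge0$; two conjugate pairs $\Rightarrow$ one $\gam_i$ real and two purely imaginary $\Rightarrow$ all $\gam_i^2$ real, one $\ge0$ and at least one strictly negative; two real and one conjugate pair $\Rightarrow$ one $\gam_i^2\ge0$ and the other two a conjugate pair of non-real numbers. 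Since $\gD>0$ corresponds to the first two patterns (\refT{T4realroots1}(i)) and then $\Res(f)$ has three real roots ($\gD\dgx3(\Res(f))=a_0^{-6}\gD>0$ by \eqref{gDres4}), the elementary fact that a real cubic with real roots has all roots $\ge0$ precisely when all three elementary symmetric functions are $\ge0$ — the third being automatic here — turns ``all $\gam_i^2\ge0$'' into ``$P\le0$ and $P^2-16a_0^2I\ge0$''. This is part~\ref{rr1111}; its negation within $\gD>0$ is part~\ref{rrc22}; squaring (using $a_0>0$ as tacitly assumed in this subsection, and $I>0$, which holds when $\gD\ge0$ by \eqref{DIJ4}) rewrites these as ``$P\le-4a_0\sqrt I$'' and ``$P>-4a_0\sqrt I$''.

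The same bookkeeping with $\gD=0$ (so $f$ has a double root and $\Res(f)$ a double root) separates the rest. If $f$ has a simple real double root $\xi$ and two further distinct roots $\eta,\zeta$, then $\Res(f)$ has the double root $\tfrac14(\eta-\zeta)^2$ and the simple root $\bigl(\xi-\tfrac12(\eta+\zeta)\bigr)^2\ge0$, these being distinct (else $\Res(f)$ would have a triple root, forcing $I=J=0$ and a triple root of $f$). When $\eta,\zeta$ are real the double root of $\Res(f)$ is positive, and a short sign computation on $(x-u)^2(x-v)$ with $u>0,\ v\ge0$ gives $P<0$ and $P^2-16a_0^2I>0$ (with $I>0$ since $\gD=0$ and $I=0$ would give $J=0$ and a triple root) — part~\ref{rr211}. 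When $\eta,\zeta$ are a conjugate pair the double root of $\Res(f)$ is negative, so $\Res(f)$ has a strictly negative root, the sign pattern of part~\ref{rr1111} fails, and after removing the sub-case $P^2-16a_0^2I=Q=0$ (which belongs to part~\ref{rrc4}, not this one) one gets part~\ref{rr2c2}. Finally, when every root of $f$ is at least double, \refT{T4JH=0} gives $\JH(f)\equiv0$ and $f=cg^2$ for a quadratic $g$; substituting $f=cg^2$ into \eqref{P4}, \eqref{Q4}, \eqref{RIP4} yields $Q=0$, $P^2-16a_0^2I=0$, and $P=-4a_0^2\,\gD\dgx2(g)$, so the sign of $P$ detects the discriminant of $g$: $P<0$ is part~\ref{rr22}, $P>0$ is part~\ref{rrc4}, and $P=0$ is the quadruple root of part~\ref{rr4}.

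The main obstacle is organisational: one must check that the nine sign-configurations are mutually exclusive and jointly exhaustive among real quartics, so that every implication established above automatically reverses. The delicate points are the strict inequalities in parts~\ref{rr211} and~\ref{rr2c2} versus the equalities $P^2-16a_0^2I=Q=0$ in parts~\ref{rr22} and~\ref{rrc4}; these rest on the facts that a real cubic with a \emph{positive} double root has strictly positive linear coefficient, and that $\gD=0$ together with $P^2=16a_0^2I$ forces $\JH(f)\equiv0$ (hence $f=cg^2$). The normalisation-and-computation steps (triple root at $0$; $f=cg^2$) and the non-vanishing of $H$ and $\JH$ in part~\ref{rr31} are routine once the set-up is fixed; I would also flag that the ``$\sqrt I$'' forms in parts~\ref{rr1111} and~\ref{rrc22} implicitly assume $a_0>0$ (otherwise the correct bound is $-4|a_0|\sqrt I$).
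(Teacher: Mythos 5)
Your overall strategy coincides with the paper's: part (iii) comes from \refT{T4realroots1}; parts (i)--(ii) come from feeding \refR{Rgam} and the signs of the resolvent's coefficients into the positivity criterion for cubic roots (your ``elementary symmetric functions all $\ge0$'' formulation is exactly \refT{T3>0} applied to $\Res(f)$ via \eqref{gDres4} and \eqref{P2-R}); the degenerate cases are handled by computation on normal forms; and the converses follow from mutual exclusivity of the sign conditions plus exhaustiveness of the root configurations. The paper normalizes each degenerate configuration by projective invariance and evaluates $I,J,P,Q,H,\JH$ directly, whereas you read the same signs off the root pattern of $\Res(f)$ and off the substitution $f=cg^2$; these are interchangeable. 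Your remark that the forms $P\le-4a_0\sqrt I$ tacitly presuppose $a_0>0$ is also correct.

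There is, however, one genuinely false step. The supporting fact you invoke for the boundary between (vii) and (ix) --- that $\gD=0$ together with $P^2=16a_0^2I$ forces $\JH(f)\equiv0$, hence $f=cg^2$ --- is wrong. Take $f(x)=x^4-2x^3+3x^2=x^2(x^2-2x+3)$: here $\gD=0$ (double root at $0$), $P=12$ and $I=9$, so $P^2-16a_0^2I=144-144=0$, yet $f$ is not the square of a quadratic, $Q=16\neq0$, and $\JH(f)\not\equiv0$ by \refT{T4JH=0}. This $f$ is precisely of type (ix) (a real double root plus a conjugate pair), so in configuration (ix) the equality $P^2-16a_0^2I=0$ \emph{can} occur; what cannot occur there is $P^2-16a_0^2I=0$ and $Q=0$ simultaneously, and it is this that your exclusion of the sub-case from part (ix) must establish. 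The lemma your argument actually needs is: $Q=0$ and $P^2=16a_0^2I$ together imply $f=cg^2$. This is true and follows from your own set-up: by \eqref{RIP4} these hypotheses give $R=P^2$, hence $q=0$ and $r=p^2/4a_0$ in \eqref{red4}, so $\red f=a_0\bigl(x^2+p/2a_0\bigr)^2$. With that corrected hypothesis the exclusion, and hence the whole mutual-exclusivity argument, goes through. (The paper sidesteps the issue by computing $P^2-16a_0^2I=48a_0^4v^2(v^2-2u^2)$ and $Q=8a_0^3uv^2$ on the normal form $a_0x^2(x-u-\ii v)(x-u+\ii v)$ and noting they cannot vanish together.)
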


\begin{proof}
\pfitemx{\ref{rr1111},\ref{rrc22}}
By \refT{T4realroots1}, these cases are characterized by $\gD>0$.
To distinguish the two cases, we note that by \refR{Rgam}, $f$ has 4 real
roots $\iff$ $\gam_1,\gam_2,\gam_3\in\bbR$
$\iff$ $\gam_1^2,\gam_2^2,\gam_3^2\in[0,\infty)$.
Since $\gam_1^2,\gam_2^2,\gam_3^2$ are the roots of the cubic resolvent
$\Res(f)$,
it follows from \refT{T3>0} 
and \eqref{Res4}
that $f$ has 4 real roots $\iff$
$\gD(\Res(f))\ge0$, $P\le0$, $P^2-R\ge0$, and $-Q^2\le0$.
Since $\gD(\Res(f))=a_0^{-6}\gD$ by \eqref{gDres4}
and $P^2-R=\frac43(P^2-16a_0^2I)$ by \eqref{P2-R}, the results follow. 

\pfitemref{rr11c2} By \refT{T4realroots1}.

In the remaining cases $f$ has a multiple root and $\gD=0$. Note that then
$4I^3=J^2$ by \eqref{DIJ4}; in particular, $I\ge0$.
We calculate the seminvariants and covariants by \eqref{I4}, \eqref{J4},
\eqref{P4}, \eqref{Q4}, \eqref{H4}, \eqref{JH4} 
in the different cases to verify the direct parts of the assertions:

\pfitemref{rr4}
We may by invariance assume
$f=a_0x^4$, and then $I=J=P=Q=H(x)=\JH(x)=0$.

\pfitemref{rr31}
We may by invariance assume
$f=a_0x^3(x-u)$ where $u\in\bbR$ with $u\neq0$,
and then $I=J=0$, $P=-3a_0^2u^2<0$, $Q=-a_0^3u^3\neq0$,
$H(x)=-9a_0^2u^2x^2$,
$\JH(x)=-36a_0^3u^3x^6$.

\pfitemref{rr22}
We may by invariance assume
$f=a_0x^2(x-u)^2$ where $u\in\bbR$ with $u\neq0$,
and then 
$I=a_0u^2$, 
$P=-4a_0^2u^2<0$, 
$P^2-16a_0^2I=0$.
Further, $\JH(x)=0$ and thus $Q=0$ by \refT{T4JH=0}.

\pfitemref{rrc4}
We may by invariance assume 
$f=a_0(x-u-\ii v)^2(x-u+\ii v)^2$ for some real $u$ and $v\neq0$, and then
$I=16a_0^2v^4$, $P=16a_0^2v^2>0$, 
$P^2-16a_0^2I=0$, 
Further, $\JH(x)=0$ and thus $Q=0$ by \refT{T4JH=0}.

\pfitemref{rr211}
We may by invariance assume 
$f=a_0 x^2(x-u)(x-v)$ for some real $u,v\neq0$, and then
$I=a_0^2u^2v^2>0$, 
$P=-a_0^2(3u^2+3v^2-2uv)=-a_0^2(2u^2+2v^2+(u-v)^2)<0$,
$P^2-16a_0^2I=3a_0^4(u-v)^2(3u^2+3v^2+2uv)>0$. 

\pfitemref{rr2c2}
We may by invariance assume 
$f=a_0x^2(x-u-\ii v)^2(x-u+\ii v)^2$  for some real $u$ and $v\neq0$, and then
$I=a_0^2(u^2+v^2)^2$, 
$P=4a_0^2(2v^2-u^2)$, 
$P=8a_0^3v^2u$, 
$P^2-16a_0^2I=48a_0^4v^2(v^2-2u^2)$. 
If $v^2\ge u^2$, then $P>0$, and if $v^2< u^2$, then 
$P^2-16a_0^2I<0$. Further, $P^2-16a_0^2I=0 \iff v^2=2u^2$,
and $Q=0\iff u=0$, which cannot hold simultaneously. 

The converse implications in \ref{rr4},\ref{rr31}
now follow by \refT{T4IJD=000} and \refT{T4H=0}.

If $\JH(x)\equiv0$, then we have  \ref{rr4}, \ref{rr22} or \ref{rrc4}
by \refT{T4JH=0}, and they are by the calculations above distinguished by
the sign of $P$, which shows the converse implications assuming $\JH(x)\equiv0$.

It is easily verified that the other conclusions in \ref{rr22}--\ref{rr2c2}
are mutually exclusive, and also exclusive of \ref{rr4}--\ref{rr31}.
Hence the converse implications follow.
\end{proof}

The proof used some properties of the cubic resolvent. Let us study its
geometry further.
The cubic resolvent $\Res(f)$ of the real quartic $f$ has 
by \eqref{Res4}, \eqref{tRes4} and \eqref{g3y0}--\eqref{g3inf} 
an inflection point at
\begin{equation}
 (x_0,y_0)=\Bigpar{\frac{-P}{12a_0^2},\,\frac{J}{27a_0^3}}
\end{equation}
and, by 
\eqref{g3+-}--\eqref{gf3+-} and 
\eqref{Pres4}--\eqref{Qres4},
extreme points at
\begin{equation}\label{res4xy+-}
  (x_\pm,y_\pm)=\Bigpar{\frac{-P\pm 4a_0\sqrt{I}}{12a_0^2},\,\frac{J\mp2 I^{3/2}}{27a_0^3}}.
\end{equation}
For the version $\Resx(f)$ in \eqref{hRes4} we have simpler formulas:
an inflection point at
\begin{equation}
 (\hx_0,\hy_0)=\bigpar{0,J}
\end{equation}
and extreme points at
\begin{equation}\label{hres4xy+-}
  (\hx_\pm,\hy_\pm)=\bigpar{\pm\sqrt{I},J\mp2 I^{3/2}}.
\end{equation}

By \eqref{res4xy+-} or \eqref{hres4xy+-}, the resolvent has two distinct
real extreme points if and only if $I>0$, while the resolvent is
strictly increasing if $I\le0$, \cf{} \refSS{SS3geo} and \eqref{Pres4},
\eqref{Phres4}. 
We further see again that the resolvent has three distinct real roots if
and only if 
$I>0$ and $J-2I^{3/2}<0<J+2I^{3/2}$, or, equivalently, if and only if
$4I^3>J^2$, \ie, if and only if $\gD=\tfrac1{27}(4I^3-J^2)>0$, 
\cf{} \refT{Tf3roots} and  \eqref{gDres4}, \eqref{gDhres4}.  

Further, using \refR{Rgam} and \eqref{res4xy+-}, 
$f$ has 4 distinct real roots\\\hbox{\qquad} 
$\iff$ $\Res(f)$ has 3 roots in $[0,\infty)$\\\hbox{\qquad} 
$\iff$ $x_+>x_-\ge0$ and $y_->0>y_+$\\\hbox{\qquad} 
$\iff$ $I>0$, $-P\ge 4a_0\sqrt I$ and $J<2I\qqc$,\\
which by \eqref{DIJ4} yields another proof of \refT{T4realroots2}\ref{rr1111}.

Finally we note that, 
by \eqref{res4xy+-} and \eqref{P2-R},
\begin{equation}
  x_+x_-=\frac{P^2-16a_0^2I}{144a_0^4}
=\frac{P^2-R}{192a_0^4}
\end{equation}
and, using also \eqref{DIJ4} again, 
\begin{equation}
y_+y_-=\frac{J^2-4I^3}{729a_0^6}
=-\frac{\gD}{27a_0^3};
\end{equation}
hence, as observed by \citet{Nickalls:quartic}, 
the seminvariants $P^2-16a_0^2I$ and $J^2-4I^3=-\gD/27$ 
(ignoring normalizations) play a symmetric role in the geometry of the cubic
resolvent. Recall from \refT{T4realroots2} that these (together with $P$)
are the most important
seminvariants when determining the number of real roots, 
at least when the roots are simple.

\newcommand\AAP{\emph{Adv. Appl. Probab.} }
\newcommand\JAP{\emph{J. Appl. Probab.} }
\newcommand\JAMS{\emph{J. \AMS} }
\newcommand\MAMS{\emph{Memoirs \AMS} }
\newcommand\PAMS{\emph{Proc. \AMS} }
\newcommand\TAMS{\emph{Trans. \AMS} }
\newcommand\AnnMS{\emph{Ann. Math. Statist.} }
\newcommand\AnnPr{\emph{Ann. Probab.} }
\newcommand\CPC{\emph{Combin. Probab. Comput.} }
\newcommand\JMAA{\emph{J. Math. Anal. Appl.} }
\newcommand\RSA{\emph{Random Struct. Alg.} }
\newcommand\ZW{\emph{Z. Wahrsch. Verw. Gebiete} }
\newcommand\DMTCS{\jour{Discr. Math. Theor. Comput. Sci.} }

\newcommand\AMS{Amer. Math. Soc.}
\newcommand\Springer{Springer-Verlag}
\newcommand\Wiley{Wiley}

\newcommand\vol{\textbf}
\newcommand\jour{\emph}
\newcommand\book{\emph}
\newcommand\inbook{\emph}
\def\no#1#2,{\unskip#2, no. #1,} 
\newcommand\toappear{\unskip, to appear}

\newcommand\webcite[1]{
\texttt{\def~{{\tiny$\sim$}}#1}\hfill\hfill}
\newcommand\webcitesvante{\webcite{http://www.math.uu.se/~svante/papers/}}
\newcommand\arxiv[1]{\webcite{arXiv:#1.}}

\def\nobibitem#1\par{}

\end{document}